\newlength\dlf
	\def\captionfont{\setb@se{11pt}\protect\footnotesize}
    \def\captionfont{\protect\footnotesize}
    \newcommand{\iprd}[2]{\left( #1 , #2 \right)}
	\newcommand{\aIPh}[2]{a_h^{IP}\left( #1 , #2 \right)}
	\newcommand{\aiprd}[2]{a\!\left( #1 , #2 \right)}
    \newcommand{\TTh}{\mathcal{T}_h}
	\newcommand{\Eh}{\mathscr{E}_h}
	\def\norm#1#2{\left\| #1 \right\|_{#2}}
	\newcommand{\ddt}{\delta_{\Delta t}}
	\newcommand{\phih}{\phi_h}
	\newcommand{\ds}{\displaystyle}
	\newtheorem{thm}{Theorem}[section]
	\newtheorem{lem}[thm]{Lemma}
	\newtheorem{rem}[thm]{Remark}
\DeclareSymbolFont{bbold}{U}{bbold}{m}{n}
\DeclareSymbolFontAlphabet{\mathbbold}{bbold}
\newcommand{\err}{\mathbbold{e}}
\newcommand{\errP}{\mathbbold{e}_P}
\newcommand{\errh}{\mathbbold{e}_h}
\NewDocumentCommand{\dgal}{sO{}m}{%
  \IfBooleanTF{#1}
    {\dgalext{#3}}
    {\dgalx[#2]{#3}}%
}
\NewDocumentCommand{\dgalext}{m}{%
  \sbox0{%
    \mathsurround=0pt 
    $\left\{\vphantom{#1}\right.\kern-\nulldelimiterspace$%
  }%
  \sbox2{\{}%
  \ifdim\ht0=\ht2
    \{\kern-.625\wd2 \{#1\}\kern-.625\wd2 \}%
  \else
    \left\{\kern-.5\wd0\left\{#1\right\}\kern-.5\wd0\right\}%
  \fi
}
\NewDocumentCommand{\dgalx}{om}{%
  \sbox0{\mathsurround=0pt$#1\{$}%
  \sbox2{\{}%
  \ifdim\ht0=\ht2
    \{\kern-.625\wd2 \{#2\}\kern-.625\wd2 \}%
  \else
    \mathopen{#1\{\kern-.5\wd0 #1\{}
    #2
    \mathclose{#1\}\kern-.5\wd0 #1\}}
  \fi
}
\begin{document}
\title{Continuous data assimilation and long-time accuracy in a $C^0$ interior penalty method for the Cahn-Hilliard equation}

	\author{
Amanda E. Diegel\thanks{Department of Mathematics and Statistics, Mississippi State University, Mississippi State, MS 39562 (adiegel@math.msstate.edu)}
\and
Leo G. Rebholz\thanks{Department of Mathematical Sciences, Clemson University, Clemson, SC 29634 (rebholz@clemson.edu); partially supported by NSF grant DMS 2011490} }

	\maketitle
	
	\numberwithin{equation}{section}
	
\begin{abstract}
We propose a numerical approximation method for the Cahn-Hilliard equations that incorporates continuous data assimilation in order to achieve long time accuracy.  The method
uses a C$^0$ interior penalty spatial discretization of the fourth order Cahn-Hilliard equations, together with a backward Euler temporal discretization.  We prove the method is long time stable and long time accurate, for arbitrarily inaccurate initial conditions, provided enough data measurements are incorporated into the simulation.  Numerical experiments illustrate the effectiveness of the method on a benchmark test problem.
\end{abstract}

\medskip

\section{Introduction}
This paper considers a numerical scheme that incorporates continuous data assimilation (CDA) in order to obtain long time stable and accurate approximations to the Cahn-Hilliard (CH) equation, which is given by 
\cite{Cahn:61:spinodal,CH:58:free,Miranville:19:CH}
\begin{subequations}\label{eq:continuous-problem}
\begin{align}
\partial_t \varphi - \Delta \left(\varphi^3 - \varphi\right) + \varepsilon^2  \Delta^2 \varphi & = 0, \\
\varphi(0) &=\varphi_0,
\end{align}
\end{subequations}
where $\varphi$ represents the order parameter which takes on values between $-1$ and $1$ and is often interpreted as a concentration of one component in a two component system. The states $\varphi = \pm 1$ indicate phases of pure concentration and $\varepsilon > 0$ can be interpreted as an interfacial width between the two phases.  The CH equation arises in many applications across science and engineering, including the modeling of two phase fluid flow, Hele-Shaw flows, copolymer fluids, and crystal growth as just a few examples (cf.  \cite{CS:2018:CHPhaseField,CLSWW:2018:CHWillmore,CMW:2011:MinCahnHilliard,Feng:2006:NSCH,LLG:2002:HeleShaw,vTBVL:2009:PFCM} and the references therein.) Solving \eqref{eq:continuous-problem} analytically in dimensions 2 or 3 is generally very challenging and practitioners typically obtain solutions with numerical simulation methods.  Most numerical methods are developed to acquire approximations to solutions of a mixed weak formulation of problem \eqref{eq:continuous-problem}, see for example
 \cite{Miranville:19:CH,GT:15:survey} and references therein. Additionally, a few papers have developed numerical methods for the fourth-order formulation shown above, \cite{AKW:15:adaptiveDGCH,ADSV:16:CHvirtual,Chen:19:CHdirect}. The method proposed herein is novel in that it is the first to incorporate CDA and, as such, is the first to admit provable long-time accuracy.

In certain problem settings, partial observable or measurement values of the solution may be available.  In such circumstances, using data assimilation to incorporate known solution values into numerical simulations often allows for more stable or accurate solutions.  This has been studied for many different evolutionary physical partial differential equation (PDE) systems in recent years \cite{Daley_1993_atmospheric_book,Law_Stuart_Zygalakis_2015_book,Azouani2014ContinuousDA,LRZ19,Kalman_1960_JBE}.  A new type of data assimilation known as CDA was developed in 2014 \cite{Azouani2014ContinuousDA}, which adapted classical nudging methods of the 1970's (see, e.g., \cite{Anthes_1974_JAS,Hoke_Anthes_1976_MWR}) to use a spatial interpolation operator in the feedback control.  This seemingly small change has led to a profound impact in accuracy and theory of data assimilation methods, as CDA provides mathematically rigorous justification of data assimilated solutions converging to true solutions exponentially fast in time (for arbitrarily inaccurate initial conditions), as well as long time accuracy and stability; these properties are unique among existing data assimilation techniques.  CDA has so far been used to improve solutions in Navier-Stokes equations \cite{Lunasin_Titi_2015,Biswas_Martinez_2017,Farhat_Lunasin_Titi_2016abridged,DMB20,DMB18},  with noisy data  \cite{Bessaih_Olson_Titi_2015,Foias_Mondaini_Titi_2016}, and with temporal and spatial discretizations \cite{LRZ19,RZ19,Ibdah_Mondaini_Titi_2018uniform,GNT18}, for NS-$\alpha$ and Leray-$\alpha$ models \cite{Albanez_Nussenzveig_Lopes_Titi_2016,FLT19}, for Benard convection \cite{Altaf_Titi_Knio_Zhao_Mc_Cabe_Hoteit_2015,Farhat_Jolly_Titi_2015,Farhat_Lunasin_Titi_2016benard}, for
the Brinkman Forchheimer-extended Darcy \cite{Markowich_Titi_Trabelsi_2016} equation, for the surface quasi-geostrophic equation in  \cite{Jolly_Martinez_Titi_2017}, and for weather prediction \cite{DHKTLD19}, among others.  Convergence of discretizations of CDA models has been studied in \cite{LRZ19,RZ19,ZRSI19,Ibdah_Mondaini_Titi_2018uniform,GNT18} for fluid related models, and it was found that if there is enough measurement data, then computed solutions will converge to the true solution exponentially fast in time, up to (optimal) discretization error.

We propose and analyze herein a particular discretization for the Cahn-Hilliard (CH) equation together with CDA, in an effort to obtain long time accuracy and stability of computed solutions to CH equations.  To our knowledge, there is no literature for CDA applied to the CH equation.  Perhaps the main reason for this is that the typical CDA application and analysis does not seem possible (at least not to the authors of this paper) for the second order mixed CH equation, which is a more commonly used formulation than the fourth order formulation above.  At the PDE level, the CDA system we consider takes the form 
\begin{subequations}\label{eq:continuous-da-problem}
\begin{align}
\partial_t \phi - \Delta \left(\phi^3 - \phi\right) + \varepsilon^2  \Delta^2 \phi + \omega I_H(\phi - \varphi) & = 0, \\
\phi(0)&=\phi_0,
\end{align}
\end{subequations}
where $\phi$ is the approximate concentration and $\varepsilon$ is the same as above. The initial condition can be arbitrarily inaccurate, and a common choice is $\phi_0=0$ in cases when there is no a priori knowledge of the initial state.  The scalar $\omega>0$ is known as the nudging parameter, and $I_H$ is the interpolation operator, where $H$ is the resolution of the coarse spatial mesh which represents the locations where measurements are taken (so that $I_H(\varphi)$ is known). The added data assimilation term forces (or nudges) the coarse spatial scales of the approximating solution $\phi$ toward the coarse spatial scales of the true solution $\varphi$.  

The discretization method we choose is a C$^0$ interior penalty method for the spatial discretization, and first order semi-implicit in time.  The first order temporal discretization is chosen for simplicity, and extension to BDF2 can be done in the usual way, following e.g. \cite{LRZ19}.  This spatial discretization is chosen so that C$^0$ finite elements (FEs) can be used, since they  are widely available in FE software but C$^1$ FEs are not, and we note that extension of this work to a C$^1$ FE discretization is possible and in fact the analysis would be simpler.  We prove that for sufficiently small $H$, solutions to our proposed discretization of \eqref{eq:continuous-da-problem} will converge (up to discretization error) to the solution of \eqref{eq:continuous-problem}, exponentially fast in time and for any initial condition in $L^2(\Omega)$.  This in turn provides for long time accuracy and long time stability.

The remainder of the paper proceeds as follows. In Section \ref{sec:preliminaries}, we introduce the necessary notation and preliminary results needed in the proceeding sections. Section \ref{sec:fully-discrete-fem} introduces a fully discrete finite element method (FEM) for the data assimilation model above. We then prove that the FEM is uniquely solvable and demonstrate the long time stability of the scheme. In Section \ref{sec:convergence}, we include a convergence analysis and we conclude with a few numerical experiments supporting our analyses in Section \ref{sec:numerical-experiments}.

\section{Notation and Preliminaries}\label{sec:preliminaries}

We consider a bounded open domain $\Omega \subset \mathbb{R}^2$.  While the method can be used in 3D, the analysis with C$^0$ interior penalty methods is currently limited to 2D. The $L^2$ inner product is denoted $\iprd{\cdot}{\cdot}$. Additionally, we denote the natural function space for the concentration $\varphi$ by $H_N^2(\Omega) := \{ v \in H^2(\Omega) | \partial_n v =0 \}$, where $\partial_n v$ represents the outward unit normal derivative of $v$. Furthermore, we denote a bilinear form $\aiprd{\cdot}{\cdot}: H_N^2(\Omega) \times H_N^2(\Omega) \rightarrow \mathbb{R}$, which is defined for all $v, w \in H_N^2(\Omega)$ by 
\begin{align}\label{eq:aiprd-def}
\aiprd{w}{v} := \int_\Omega \nabla^2 w : \nabla^2 v \, dx,
\end{align}
and which represents the inner product of the Hessian matrices of $w$ and $v$. With this notation, we are able to define a weak formulation of \eqref{eq:continuous-problem} as follows: Find $\varphi \in L^{\infty}(0,T;H_N^2(\Omega))$ such that, for almost all $t \in (0,\infty)$,
\begin{align}\label{eq:weak-formulation}
\iprd{\partial_t \varphi}{\psi} + \iprd{\nabla \left(\varphi^3 - \varphi\right)}{\nabla \psi} + \varepsilon^2  \aiprd{ \varphi}{\psi} = 0, \quad \forall \,  \psi \in H^2_N(\Omega).
\end{align}

\subsection{Discretization Preliminaries}\label{sec:discretization-preliminaries}

Let $\TTh$ be a simplicial triangulation of $\Omega$. We will use the following notation throughout the paper:
\begin{itemize}\itemsep0ex
\item $h_K = $ diameter of triangle $K$ ($h = \max_{K \in \TTh} h_K$),
\item $v_K = $ restriction of the function $v$ to the triangle $K$,
\item $|K| = $ area of the triangle $K$,
\item $\Eh = $ the set of the edges of the triangles in $\TTh$,
\item $e = $ the edge of a triangle.
\end{itemize}
Additionally, we let $Z_h := \{ v \in C(\overline{\Omega}) | v_K = v|_K \in P_2(\Omega) \forall K \in \TTh \}$ represent a standard Lagrange FE space associated with $\TTh$ and assume that the mesh is sufficiently regular for the inverse inequality to hold. Specifically, we assume that there exists a constant $C$ such that for all $v \in Z_h$, 
\begin{align*}
\norm{\nabla v}{L^2} \le C h^{-1} \norm{v}{L^2}.
\end{align*}
Furthermore, we consider $I_H$ to be an interpolation operator that satisfies: For a given mesh $\mathcal{T}_H$ with $H \le 1$ and associated FE space $Z_H$,
\begin{align}
\norm{I_H(w) - w}{L^2} &\le C_I H \norm{\nabla w}{L^2}
\label{eq:projection-properties-diff}
\\
\norm{I_H(w)}{L^2} &\le C_I \norm{w}{L^2},
\label{eq:projection-properties-equiv}
\end{align}
for any $w \in H^1(\Omega)$.  Examples of such $I_H$ are the $L^2$ projection onto $Z_H$ where $Z_H$ consists of piecewise constants over  $\mathcal{T}_H$, the algebraic nudging technique from \cite{RZ19}, and the Scott-Zhang interpolant \cite{SZ90}. 

Let $w \in H^2(K), v \in H^1(K)$, then we have the following integration by parts formula:
\begin{align}
\int_{K} (\Delta w) v \, dx =&\, \int_{\partial K} \dfrac{\partial w}{\partial n} v \, ds - \int_{K} \nabla w \cdot \nabla v \, dx,
\label{eq:integration-by-parts-lower-order}
\end{align}
If instead, $w \in H^4(K), v \in H^2(K)$, then we have:
\begin{align}
\int_K (\Delta^2 w) v \, dx =& \, \int_{\partial K} \left[ \left(\frac{\partial \Delta w}{\partial n}\right) v  - \left(\frac{\partial^2 w}{\partial n^2}\right) \left(\frac{\partial v}{\partial n}\right) - \left(\frac{\partial^2 w}{\partial n \partial t}\right) \left(\frac{\partial v}{\partial t}\right) \right] dS
\nonumber
\\
&+ \int_K \left(\nabla^2 w : \nabla^2 v\right) dx,
\label{eq:integration-by-parts-higher-order}
\end{align}
where $ \nicefrac{\partial}{\partial n}$ (resp. $\nicefrac{\partial}{\partial t}$) denote the exterior normal derivative (resp. the counterclockwise tangential derivative). The integration by parts formula \eqref{eq:integration-by-parts-higher-order} leads to the definition of the bilinear form $\aIPh{\cdot}{\cdot}$ on the piecewise Sobolev space $H^3(\Omega, \TTh) := \{v \in L^2(\Omega) | v_K \in H^3(K) \forall K \in \TTh\}$ such that
\begin{align}
\label{eq:aIPh-def}
\aIPh{w}{v} :=& \, \sum_{K \in \TTh}  \int_K \left(\nabla^2 w : \nabla^2 v\right) \, dx + \sum_{e \in \Eh} \int_e \dgal[\Bigg]{\frac{\partial^2 w}{\partial n_e^2} } \left\llbracket \frac{\partial v}{\partial n_e} \right\rrbracket dS 
\nonumber
\\
&+ \sum_{e \in \Eh} \int_e \dgal[\Bigg]{\frac{\partial^2 v}{\partial n_e^2} } \left\llbracket \frac{\partial w}{\partial n_e} \right\rrbracket dS + \sigma \sum_{e \in \Eh} \frac{1}{|e|} \int_e  \left\llbracket \frac{\partial w}{\partial n_e} \right\rrbracket  \left\llbracket \frac{\partial v}{\partial n_e} \right\rrbracket dS,
\end{align}
with $\sigma \ge 1$ known as a penalty parameter. The jumps and averages that appear in \eqref{eq:aIPh-def} are defined as follows. For an interior edge $e$ shared by two triangles $K_\pm$ where $n_e$ points from $K_-$ to $K_+$, we define on the edge $e$
\begin{align}
\label{eq:jumps-ave-interior}
\left\llbracket \frac{\partial v}{\partial n_e} \right\rrbracket = n_e \cdot \left(\nabla v_+ - \nabla v_-\right),
\quad
\dgal[\Bigg]{\frac{\partial^2 v}{\partial n_e^2} } = \frac{1}{2}\left( \frac{\partial^2 v_-}{\partial n_e^2} +  \frac{\partial^2 v_+}{\partial n_e^2} \right),
\end{align}
where $\displaystyle \frac{\partial^2 v}{\partial n_e^2} = n_e \cdot \left(\nabla^2 v\right) n_e$ and $v_\pm = v |_{K_\pm}$. For a boundary edge $e$, we take $n_e$ to be the unit normal pointing towards the outside of $\Omega$ and define on the edge $e$
\begin{align}\label{eq:jumps-ave-boundary}
\left\llbracket \frac{\partial v}{\partial n_e} \right\rrbracket = - n_e \cdot \nabla v_K ,
\quad
\dgal[\Bigg]{\frac{\partial^2 v}{\partial n_e^2} } = n_e \cdot \left( \nabla^2 v \right) n_e.
\end{align}

\begin{rem}
Note that the definitions \eqref{eq:jumps-ave-interior} and \eqref{eq:jumps-ave-boundary} are independent of the choice of $K_\pm$, or equivalently, independent of the choice of $n_e$.
\end{rem}

Let $\norm{\cdot}{2,h}$ be defined by
\begin{align}\label{def:coip-norm}
\norm{v}{2,h}^2:=  \sum_{K\in \TTh} |v|_{H^2(K)}^2 + \sum_{e\in \Eh} \frac{\sigma}{|e|} \norm{\left\llbracket \frac{\partial v}{\partial n_e} \right\rrbracket}{L^2(e)}^2.
\end{align}
The following theorem guarantees the boundedness of $\aIPh{\cdot}{\cdot}$.

\begin{lem}[Boundedness of $\aIPh{\cdot}{\cdot}$]\label{lem:aIPh-boundedness} There exists positive constants $C_{cont}$ and $C_{coer}$ such that for choices of the penalty parameter $\sigma$ large enough we have
\begin{align}
\aIPh{w}{v} &\le C_{cont} \norm{w}{2,h} \norm{v}{2,h} \quad \forall \, w, v \, \in Z_h  \label{eq:cont}
\\
C_{coer}\norm{w}{2,h}^2 &\le \aIPh{w}{w} \quad \forall \, w \in Z_h, \label{eq:coer}
\end{align}
where the constants $C_{cont}$ and $C_{coer}$ depend only on the shape regularity of $\TTh$.
\end{lem}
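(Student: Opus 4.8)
The plan is to prove the two inequalities by reducing all edge integrals in \eqref{eq:aIPh-def} to element-wise $H^2$-seminorms through a single elementary estimate, and then, for coercivity, absorbing the consistency (cross) terms into the diagonal terms via Young's inequality.

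\textbf{The key estimate.} For $v\in Z_h$ and a triangle $K$ with edge $e$, the restriction $v|_K\in P_2(K)$ has a \emph{constant} Hessian; write $M_K:=\nabla^2 v|_K$ and let $|M_K|$ be its Frobenius norm. Then $\left\|\tfrac{\partial^2 v}{\partial n_e^2}\right\|_{L^2(e)}^2 = |e|\,(n_e^\top M_K n_e)^2 \le |e|\,|M_K|^2$ while $|v|_{H^2(K)}^2 = |K|\,|M_K|^2$, so shape regularity of $\TTh$ (which bounds $|e|^2/|K|$) gives $|e|\left\|\tfrac{\partial^2 v}{\partial n_e^2}\right\|_{L^2(e)}^2 \le C_{tr}^2\,|v|_{H^2(K)}^2$ with $C_{tr}$ depending only on shape regularity. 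Applying this to $\avrg{\partial^2 v/\partial n_e^2}$ on an interior edge $e=\partial K_+\cap\partial K_-$ with $(a+b)^2\le 2(a^2+b^2)$, and noting the one-sided average on a boundary edge obeys the same bound, then summing and using that each triangle has three edges yields
\[
\sum_{e\in\Eh} |e|\left\|\avrg{\tfrac{\partial^2 v}{\partial n_e^2}}\right\|_{L^2(e)}^2 \;\le\; \tfrac{3C_{tr}^2}{2}\sum_{K\in\TTh}|v|_{H^2(K)}^2 .
\]

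\textbf{Continuity \eqref{eq:cont}.} Bound the first sum in \eqref{eq:aIPh-def} by Cauchy--Schwarz as $\sum_K|w|_{H^2(K)}|v|_{H^2(K)}\le\norm{w}{2,h}\norm{v}{2,h}$. For each of the two middle sums, write $\int_e \avrg{\cdot}\Jump{\cdot}\,dS \le \big(|e|^{1/2}\|\avrg{\cdot}\|_{L^2(e)}\big)\big(|e|^{-1/2}\|\Jump{\cdot}\|_{L^2(e)}\big)$, apply Cauchy--Schwarz over edges, use the key estimate on the first factor, and on the second factor use $\sum_e|e|^{-1}\|\Jump{\cdot}\|_{L^2(e)}^2\le\sigma^{-1}\norm{v}{2,h}^2\le\norm{v}{2,h}^2$ since $\sigma\ge1$. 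The penalty sum is $\le\norm{w}{2,h}\norm{v}{2,h}$ directly by Cauchy--Schwarz over edges. Collecting constants gives $C_{cont}$ depending only on shape regularity.

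\textbf{Coercivity \eqref{eq:coer}.} With $w=v$ the two middle sums coincide, so $\aIPh{w}{w}=\sum_K|w|_{H^2(K)}^2 + 2\sum_e\int_e\avrg{\partial^2 w/\partial n_e^2}\Jump{\partial w/\partial n_e}\,dS + \sigma\sum_e\tfrac{1}{|e|}\|\Jump{\partial w/\partial n_e}\|_{L^2(e)}^2$. Estimating the middle term by $2\|\avrg{\cdot}\|_{L^2(e)}\|\Jump{\cdot}\|_{L^2(e)}\le \epsilon^{-1}|e|\|\avrg{\cdot}\|_{L^2(e)}^2 + \epsilon|e|^{-1}\|\Jump{\cdot}\|_{L^2(e)}^2$, summing over edges, and invoking the key estimate gives
\[
\aIPh{w}{w}\;\ge\;\Big(1-\tfrac{3C_{tr}^2}{2\epsilon}\Big)\sum_K|w|_{H^2(K)}^2 \;+\;(\sigma-\epsilon)\sum_e\tfrac{1}{|e|}\big\|\Jump{\tfrac{\partial w}{\partial n_e}}\big\|_{L^2(e)}^2 .
\]
Choosing $\epsilon=3C_{tr}^2$ makes the first coefficient $\tfrac12$, and then $\sigma\ge 6C_{tr}^2$ (the precise meaning of ``$\sigma$ large enough'') makes $\sigma-\epsilon\ge\tfrac{\sigma}{2}$, yielding \eqref{eq:coer} with $C_{coer}=\tfrac12$.

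\textbf{Main obstacle.} The delicate point is arranging the cross-term absorption so that $C_{coer}$ is independent of both $\sigma$ and $h$; this forces $\epsilon$ to be tied only to the shape-regularity constant $C_{tr}$ and forces the threshold on $\sigma$ to be a fixed multiple of $C_{tr}^2$. A minor but necessary care point is the consistent treatment of boundary edges, where the jump and average degenerate to one-sided expressions but satisfy the same bounds. Everything else is routine Cauchy--Schwarz and Young's inequality bookkeeping.
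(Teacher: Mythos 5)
Your proof is correct in substance, but note that the paper does not prove this lemma at all: it simply cites Brenner's $C^0$ interior penalty lecture notes, where continuity and coercivity are established for general Lagrange elements $P_k$, $k\ge 2$, via discrete trace and inverse inequalities on each triangle. Your argument follows the same overall skeleton (elementwise Cauchy--Schwarz for the volume term, an edge-to-element trace bound for the averages, Cauchy--Schwarz over edges with the weights $|e|^{\pm 1/2}$, and Young absorption of the cross terms for coercivity), but it replaces the general polynomial trace inequality with the elementary observation that for $v\in Z_h$ the Hessian is constant on each $K$, so $|e|\,\norm{\partial^2 v/\partial n_e^2}{L^2(e)}^2\le (|e|^2/|K|)\,|v|_{H^2(K)}^2$ follows from shape regularity alone. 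What this buys is a completely self-contained proof with explicit constants ($C_{coer}=\tfrac12$ once $\sigma$ exceeds a fixed multiple of $C_{tr}^2$) and a transparent meaning of ``$\sigma$ large enough''; what it costs is generality, since the argument is tied to piecewise quadratics, whereas the cited proof covers arbitrary degree. One small bookkeeping slip: on a boundary edge the average is one-sided and carries no factor $\tfrac12$, so the edge sum is bounded by $3C_{tr}^2\sum_{K\in\TTh}|v|_{H^2(K)}^2$ rather than $\tfrac{3C_{tr}^2}{2}\sum_{K\in\TTh}|v|_{H^2(K)}^2$ in general; this only changes the choice $\epsilon=6C_{tr}^2$ and the threshold on $\sigma$ by a factor of two and does not affect the conclusion, since the constants need only depend on shape regularity.
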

\begin{proof}
The proof of the Lemma may be found in \cite{brenner:11:frontiers}.
\end{proof}

We remark that for all $v \in Z_h$, we have the following Poincar\'{e} type inequalities: There exists a constant $C_{P}$ depending only on $\Omega$ such that,
\begin{align*}
\norm{v}{L^2} \le C_P \norm{\nabla v}{L^2} \quad \text{and} \quad \norm{\nabla v}{L^2} \le C_P \norm{v}{2,h}.
\end{align*}
Additionally, the first of these inequalities holds for all $v \in H_N^2(\Omega)$. Finally, the following two lemmas are critical to the remainder of the paper.

\begin{lem}\label{lem:grad-split}
Suppose $\Omega$ is a bounded polygonal domain. For all $w \in Z_h$, $v \in H^1(\Omega)$, and $\sigma$ large enough, 
\begin{align}
    |\iprd{\nabla w}{\nabla v}| \le \sqrt{2} \norm{w}{2,h} \norm{v}{L^2}.
\end{align}
\end{lem}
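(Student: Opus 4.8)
The plan is to transfer one derivative from $v$ onto $w$ by integrating by parts element by element, and then estimate the resulting volume and edge contributions separately. Since every $w\in Z_h$ restricts to a polynomial (hence to an $H^2$ function) on each $K\in\TTh$ while $v|_K\in H^1(K)$, formula \eqref{eq:integration-by-parts-lower-order} applies on each element; summing over $\TTh$ and regrouping the element-boundary integrals edge by edge — using that $v\in H^1(\Omega)$ has single-valued traces on interior edges, and the boundary-edge convention \eqref{eq:jumps-ave-boundary} — yields the identity
\[
\iprd{\nabla w}{\nabla v} \;=\; -\sum_{K\in\TTh}\int_K(\Delta w)\,v\,dx \;-\;\sum_{e\in\Eh}\int_e \llbracket \partial w/\partial n_e\rrbracket\, v\,ds .
\]
(I would double-check the signs here against the jump definitions \eqref{eq:jumps-ave-interior}--\eqref{eq:jumps-ave-boundary}.)

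For the volume term I would apply Cauchy--Schwarz on each $K$ together with the elementary planar bound $|\Delta w|\le\sqrt2\,|\nabla^2 w|$ (pointwise, from $(\partial_{xx}w+\partial_{yy}w)^2\le 2(\partial_{xx}w^2+\partial_{yy}w^2)\le 2\,|\nabla^2 w|^2$) — which is exactly the origin of the constant $\sqrt2$ — and then a discrete Cauchy--Schwarz over the elements, obtaining
\[
\Bigl|\sum_{K\in\TTh}\int_K(\Delta w)\,v\,dx\Bigr| \;\le\; \sqrt2\Bigl(\sum_{K\in\TTh}|w|_{H^2(K)}^2\Bigr)^{1/2}\norm{v}{L^2} \;\le\; \sqrt2\,\norm{w}{2,h}\norm{v}{L^2},
\]
the last step by the definition \eqref{def:coip-norm} of $\norm{\cdot}{2,h}$. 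For the edge term I would insert the penalty weight $\sigma/|e|$, apply Cauchy--Schwarz on each edge and then across $\Eh$ so that the jump part of $\norm{w}{2,h}$ factors out, and bound the leftover factor $\bigl(\sum_{e\in\Eh}\tfrac{|e|}{\sigma}\norm{v}{L^2(e)}^2\bigr)^{1/2}$ by means of a scaled trace inequality (controlling $\norm{v}{L^2(e)}$ by the $L^2$-norm of $v$ on an adjacent element, with the appropriate power of $h_K$), using $|e|\le h_K$ and shape regularity; for $\sigma$ large enough this edge contribution is absorbed and the total constant is $\sqrt2$.

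The step I expect to be the main obstacle is the edge term. It cannot be merged into the volume term, since the normal-derivative jump seminorm in $\norm{\cdot}{2,h}$ is genuinely not dominated by the broken $H^2$ seminorm — a piecewise-affine element of $Z_h$ with a jump in its normal derivative has vanishing broken $H^2$ seminorm but positive $\norm{\cdot}{2,h}$ — so it must be handled through the penalty weight together with a trace estimate, and this is precisely where the hypothesis ``$\sigma$ large enough'' is needed. The remaining ingredients — the element-wise integration by parts, the $\sqrt2$ bound on the Laplacian, and the two discrete Cauchy--Schwarz inequalities — are routine.
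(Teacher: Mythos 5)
Your proposal is correct and is essentially the paper's own argument: element-wise integration by parts with the inter-element boundary terms regrouped into normal-derivative jumps over $\Eh$, Cauchy--Schwarz on the volume term against the broken $H^2$ seminorm, and Cauchy--Schwarz on the edge term against the $\sigma/|e|$-weighted jump seminorm, with a trace inequality and the hypothesis that $\sigma$ is large enough used to absorb the edge contribution, exactly as in the paper. The only difference is bookkeeping of where the constant $\sqrt{2}$ comes from (your pointwise bound $|\Delta w|\le\sqrt{2}\,|\nabla^2 w|$ versus the paper's use of $(A+B)^2\le 2A^2+2B^2$), which does not change the substance of the proof.
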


\begin{proof}
We begin by rewriting the integration by part formula \eqref{eq:integration-by-parts-lower-order}:
\begin{align*}
\int_{K} \nabla w \cdot \nabla v \, dx = \int_{\partial K} \dfrac{\partial w}{\partial n} v \, ds - \int_{K} \Delta w v \, dx.
\end{align*}
Summing over all triangles in $\TTh$, we have
\begin{align*}
\sum_{K \in \TTh} \int_{K} \nabla w \cdot \nabla v \, dx = \sum_{K \in \TTh} \int_{\partial K} \dfrac{\partial w}{\partial n} v \, ds - \sum_{K \in \TTh} \int_{K} \Delta w v \, dx.
\end{align*}
Now, we can write the first sum on the right-hand side of the equation above as a sum over the edges in $\Eh$:
\begin{align*}
\sum_{K \in \TTh} \int_{K} \nabla w \cdot \nabla v \, dx = \sum_{e \in \Eh} \int_{e} \left\llbracket \frac{\partial w}{\partial n_e} \right\rrbracket v \, ds - \sum_{K \in \TTh} \int_{K} \Delta w v \, dx.
\end{align*}
Using the Cauchy-Schwarz inequality and a standard trace inequality, we have
\begin{align*}
\left(\sum_{K \in \TTh} \int_{K} \nabla w \cdot \nabla v \, dx\right)^2 \le&\,  2 \left(\sum_{e \in \Eh} |e|^{-1} \norm{\left\llbracket \frac{\partial w}{\partial n_e} \right\rrbracket}{L^2(e)}^2\right) \left(\sum_{e \in \Eh} |e| \norm{v}{L^2(e)}^2 \right)
\\
&+2 \left(\sum_{K \in \TTh} |w|_{H^2(K)}^2 \right)\left(\sum_{K \in \TTh} \norm{v}{L^2(K)}^2 \, dx\right)
\\
\le&\, 2 \left(\sum_{e \in \Eh} |e|^{-1} \norm{\left\llbracket \frac{\partial w}{\partial n_e} \right\rrbracket}{L^2(e)}^2 \right) \left(C \sum_{K \in \TTh} \norm{v}{L^2(K)} \right)
\\
&+ 2\left(\sum_{K \in \TTh}|w|_{H^2(K)}^2 \right)\left(\sum_{K \in \TTh} \norm{v}{L^2(K)} \right)  
\\
&\le\, 2 \norm{w}{2,h}^2 \norm{v}{L^2(\Omega)}^2
\end{align*}
for $\sigma$ large enough.
\end{proof}

\begin{lem}\label{lem:geometric-series-bound}
Suppose the constants $r$ and $B$ satisfy $r > 1$ and $B \ge 0$. Then if the sequence of real numbers $\{a_m\}$ satisfies 
\begin{align}
r a_{m+1} \le a_m + B,
\end{align}
we have that
\begin{align*}
a_{m+1} \le a_0 \left(\frac{1}{r}\right)^{m+1} + \frac{B}{r-1}.
\end{align*}
\end{lem}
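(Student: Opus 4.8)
The plan is to unwind the one-step recursion into a closed form by induction and then bound the resulting finite geometric sum by the corresponding infinite series. First I would divide the hypothesis by $r > 0$ to obtain the equivalent form $a_{m+1} \le \frac{1}{r} a_m + \frac{B}{r}$, valid for every index; note that this step preserves the inequality precisely because $r$ is positive, and no sign assumption on the $a_m$ is required.

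Next, I would prove by induction on $m$ that
\begin{align*}
a_{m+1} \le \frac{a_0}{r^{m+1}} + B \sum_{k=1}^{m+1} \frac{1}{r^k}.
\end{align*}
The base case $m = 0$ is exactly the rewritten hypothesis. For the inductive step, assuming the bound with $m-1$ in place of $m$, I multiply that bound by $1/r > 0$ and add $B/r$, which gives
\begin{align*}
a_{m+1} \le \frac{1}{r} a_m + \frac{B}{r} \le \frac{a_0}{r^{m+1}} + B \sum_{k=1}^{m} \frac{1}{r^{k+1}} + \frac{B}{r} = \frac{a_0}{r^{m+1}} + B \sum_{k=1}^{m+1} \frac{1}{r^k},
\end{align*}
completing the induction.

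Finally, since $r > 1$ the geometric series $\sum_{k=1}^{\infty} r^{-k}$ converges to $\frac{1/r}{1 - 1/r} = \frac{1}{r-1}$, and since $B \ge 0$ the partial sum obeys $B \sum_{k=1}^{m+1} r^{-k} \le \frac{B}{r-1}$; substituting this into the displayed bound yields the assertion. There is essentially no obstacle here: the only points needing (minor) care are that multiplication/division by $r$ is legitimate because $r>0$, and that the hypothesis $B \ge 0$ is exactly what licenses replacing the finite geometric sum by its infinite counterpart.
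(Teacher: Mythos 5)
Your proof is correct and complete: dividing by $r>0$, unrolling the recursion by induction to get $a_{m+1} \le a_0 r^{-(m+1)} + B\sum_{k=1}^{m+1} r^{-k}$, and then bounding the finite geometric sum by $\frac{1}{r-1}$ (using $B\ge 0$ and $r>1$) is exactly the standard argument, and you are right that no sign condition on the $a_m$ is needed. The paper itself gives no proof for this lemma, deferring to the reference \cite{LRZ19}, where the argument is this same elementary unrolling of the one-step recursion, so there is nothing further to reconcile.
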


\begin{proof}
See \cite{LRZ19}.
\end{proof}

%

\section{Fully Discrete C$^0$ Interior Penalty FEM with Data Assimilation }\label{sec:fully-discrete-fem}

Let $M$ be a positive integer and $0=t_0 < t_1 < \cdots < t_M = T$ be a uniform partition of $[0,T]$. A fully discrete C$^0$ interior penalty method for \eqref{eq:continuous-da-problem} is: Given $\phih^{m-1} \in Z_h$ and true solution $\varphi \in L^\infty(0,T;H_N^2(\Omega))$, find $\phih^{m} \in  Z_h$ such that 
\begin{align}\label{eq:fully-discrete-fem}
\iprd{\ddt \phih^{m}}{ \psi} + \iprd{\nabla \left(\left(\phih^m\right)^3 - \phih^{m-1}\right)}{\nabla \psi} + \varepsilon^2 \aIPh{\phih^{m}}{\psi} + \omega \iprd{I_H \left(\phih^m - \varphi^m\right)}{\psi} =0 ,
\end{align}
for all $\psi \in Z_h$ with initial data taken to be $\phi_h^0 := P_h \phi_0$ where $P_h: H_N^2(\Omega) \rightarrow Z_h$ is a Ritz projection operator such that
\begin{align}\label{eq:H2-Ritz-projection}
\aIPh{P_h \phi - \phi}{\xi} = 0 \quad \forall \, \xi \in Z_h, \quad \iprd{P_h \phi - \phi}{1} = 0,
\end{align}
and where $\displaystyle \ddt \phih^{m} := \frac{\phih^{m} - \phih^{m-1}}{\Delta t}$ with $\Delta t=T/M$ as the size for the time step.  We will refer to the method \eqref{eq:fully-discrete-fem} as the CDA-FEM.

We will begin by showing that solutions to \eqref{eq:fully-discrete-fem} exist followed by a stability result and then conclude with a proof for the uniqueness of the solution. 

\begin{lem}
Let $\phih^{m-1} \in Z_h$ and $\varphi^m \in H_N^2(\Omega)$ be given. Then, if $\omega < \dfrac{C_{coer}\varepsilon^2}{2 C_P^2 C_I^2 H^2}$, there exists a solution $\phih^m \in Z_h$ to \eqref{eq:fully-discrete-fem}.
\end{lem}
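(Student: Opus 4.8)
The plan is to use the standard consequence of Brouwer's fixed-point theorem, which is available because $Z_h$ is finite-dimensional. Fix $\phih^{m-1} \in Z_h$ and $\varphi^m \in H_N^2(\Omega)$ and define a map $F : Z_h \to Z_h$ by Riesz representation in the $L^2(\Omega)$ inner product: for $v \in Z_h$, let $F(v) \in Z_h$ be the unique element such that
\begin{equation*}
\iprd{F(v)}{\psi} = \frac{1}{\Delta t}\iprd{v - \phih^{m-1}}{\psi} + \iprd{\nabla\big(v^3 - \phih^{m-1}\big)}{\nabla \psi} + \varepsilon^2 \aIPh{v}{\psi} + \omega \iprd{I_H(v - \varphi^m)}{\psi}
\end{equation*}
for every $\psi \in Z_h$. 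Then $\phih^m \in Z_h$ solves \eqref{eq:fully-discrete-fem} if and only if $F(\phih^m) = 0$, and $F$ is continuous since each of its terms is polynomial in the coordinates of $v$ with respect to a basis of $Z_h$. By the usual corollary of Brouwer's theorem (cf.\ \cite{LRZ19}), it therefore suffices to exhibit a radius $\rho > 0$ such that $\iprd{F(v)}{v} > 0$ for every $v \in Z_h$ with $\norm{v}{L^2} = \rho$; this forces $F$ to vanish at some point of the ball $\{\norm{v}{L^2} \le \rho\}$.

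Next I would bound $\iprd{F(v)}{v}$ from below, term by term. The discrete time-derivative term contributes $\frac{1}{\Delta t}\norm{v}{L^2}^2 - \frac{1}{\Delta t}\norm{\phih^{m-1}}{L^2}\norm{v}{L^2}$. The cubic term is the only place the nonlinearity appears, and it is harmless: $\iprd{\nabla(v^3)}{\nabla v} = 3\int_\Omega v^2 |\nabla v|^2 \, dx \ge 0$. The term $-\iprd{\nabla \phih^{m-1}}{\nabla v}$ is controlled by Cauchy--Schwarz together with the Poincar\'e-type inequality $\norm{\nabla v}{L^2} \le C_P \norm{v}{2,h}$ and then Young's inequality, absorbing a small multiple of $\norm{v}{2,h}^2$ into the penalty term at the cost of a constant depending on $\norm{\nabla \phih^{m-1}}{L^2}$. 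The penalty term itself is bounded below by coercivity, $\varepsilon^2 \aIPh{v}{v} \ge \varepsilon^2 C_{coer} \norm{v}{2,h}^2$, from Lemma \ref{lem:aIPh-boundedness} (with $\sigma$ taken large enough).

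The genuinely delicate term is the nudging term, and this is where the hypothesis on $\omega$ enters. Since $I_H$ is only assumed to satisfy \eqref{eq:projection-properties-diff}--\eqref{eq:projection-properties-equiv} and need not be $L^2$-positive, I would write $I_H(v - \varphi^m) = (v - \varphi^m) + \big(I_H(v - \varphi^m) - (v - \varphi^m)\big)$, giving
\begin{equation*}
\omega\iprd{I_H(v - \varphi^m)}{v} = \omega\norm{v}{L^2}^2 - \omega\iprd{\varphi^m}{v} + \omega\iprd{I_H(v - \varphi^m) - (v - \varphi^m)}{v}.
\end{equation*}
The first term is a favorable nonnegative contribution. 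For the last term, \eqref{eq:projection-properties-diff} and the triangle inequality give $\norm{I_H(v - \varphi^m) - (v - \varphi^m)}{L^2} \le C_I H\big(C_P\norm{v}{2,h} + \norm{\nabla \varphi^m}{L^2}\big)$, so after Cauchy--Schwarz the dangerous piece is $\omega C_I C_P H\norm{v}{2,h}\norm{v}{L^2}$; Young's inequality bounds it by $\frac{\varepsilon^2 C_{coer}}{2}\norm{v}{2,h}^2 + \frac{\omega^2 C_I^2 C_P^2 H^2}{2\varepsilon^2 C_{coer}}\norm{v}{L^2}^2$, and the hypothesis $\omega < \frac{C_{coer}\varepsilon^2}{2 C_P^2 C_I^2 H^2}$ makes the second coefficient strictly less than $\frac{\omega}{4}$. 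Hence this piece is absorbed into half of $\varepsilon^2 \aIPh{v}{v}$ together with part of $\omega\norm{v}{L^2}^2$.

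Collecting the estimates yields $\iprd{F(v)}{v} \ge c_1\norm{v}{L^2}^2 + c_2\norm{v}{2,h}^2 - c_3\norm{v}{L^2} - c_4$ with $c_1 \ge \frac{1}{\Delta t} > 0$, $c_2 > 0$, and $c_3, c_4 \ge 0$ depending only on $\Delta t$, $\varepsilon$, $\omega$, $H$, $\Omega$, $\phih^{m-1}$ and $\varphi^m$ (but not on $v$); this is positive once $\norm{v}{L^2} = \rho$ is chosen large enough, which completes the argument. The main obstacle is precisely this nudging term: one cannot simply discard $\omega\iprd{I_H v}{v}$ (which may be negative for a general interpolant), and the smallness assumption on $\omega$ is exactly what balances the $O(H)$ deviation of $I_H$ from the identity against the coercivity of $\aIPh{\cdot}{\cdot}$. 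Every other step is routine given finite-dimensionality and Lemma \ref{lem:aIPh-boundedness}.
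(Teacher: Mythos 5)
Your proposal is correct and follows essentially the same route as the paper: a Brouwer-type fixed-point corollary applied to the $L^2$-Riesz map of the scheme, using coercivity of $a_h^{IP}$, nonnegativity of the cubic term, and the interpolation estimate \eqref{eq:projection-properties-diff} with the smallness hypothesis on $\omega H^2$ to absorb the nudging term. The only differences are cosmetic: the paper exhibits an explicit radius $q$ and bounds $-\iprd{\nabla \phih^{m-1}}{\nabla \chi}$ via Lemma \ref{lem:grad-split} and $\|I_H\varphi^m\|_{L^2}$ via \eqref{eq:projection-properties-equiv}, whereas you take a generic large radius and use $\|\nabla \phih^{m-1}\|_{L^2}$ and $\|\nabla \varphi^m\|_{L^2}$, which is equally valid.
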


\begin{proof}
Let $\phih^{m-1} \in Z_h$ and $\varphi^m \in H_N^2(\Omega)$ be given and $G_h: Z_h \rightarrow Z_h$ be the continuous map defined by
\begin{align}\label{eq:fixed-point-form}
    \iprd{G_h(\phih^m)}{\chi} :=&\, \iprd{\phih^{m} - \phih^{m-1}}{ \chi} + \Delta t \iprd{\nabla \left(\left(\phih^m\right)^3 - \phih^{m-1}\right)}{\nabla \chi} 
    \nonumber
    \\
    &+ \varepsilon^2 \Delta t \, \aIPh{\phih^{m}}{\chi} + \omega \Delta t \iprd{I_H \left(\phih^m - \varphi^m\right)}{\chi}.
\end{align}
It is a well-known consequence of Brouwer's fixed-point theorem \cite{Thomee:84:galerkin} that \newline $G_h(\phih^m) = 0$ has a solution $\phih^m \in B_q := \{ \chi \in Z_h : \, \norm{\chi}{L^2} \le q\}$ if $\iprd{G_h(\chi)}{\chi} > 0$ for $\norm{\chi}{L^2} = q$, where we define $q^2 =  \left(1 + \dfrac{4 \Delta t}{C_{coer} \varepsilon^2} \right) \norm{\phih^{m-1}}{L^2}^2 + \omega C_I^2 \Delta t \norm{ \varphi^m}{L^2}^2$. 

Using Young's inequality, \eqref{eq:projection-properties-diff}, \eqref{eq:projection-properties-equiv}, and Lemmas \ref{lem:aIPh-boundedness} and \ref{lem:grad-split}, we have
\begin{align*}
\iprd{G_h(\chi)}{\chi} \ge& \, (1+ \omega \Delta t) \norm{\chi}{L^2}^2 + 3 \Delta t \norm{\chi \nabla \chi}{L^2}^2 + C_{coer} \varepsilon^2 \Delta t \norm{\chi}{2,h}^2 
\\
&- \iprd{\phih^{m-1}}{\chi} - \Delta t \iprd{\nabla \phih^{m-1}}{\nabla \chi} - \omega \Delta t \iprd{\chi - I_H \chi}{\chi} - \omega \Delta t \iprd{I_H \varphi^m}{\chi}
\\
\ge &\, (1+ \omega \Delta t) \norm{\chi}{L^2}^2 + 3 \Delta t \norm{\chi \nabla \chi}{L^2}^2 + C_{coer} \varepsilon^2 \Delta t \norm{\chi}{2,h}^2 - \norm{\phih^{m-1}}{L^2} \norm{\chi}{L^2} 
 \\
&- \sqrt{2} \Delta t \norm{\phih^{m-1}}{L^2} \norm{\chi}{2,h} - \omega \Delta t \norm{\chi - I_h \chi}{L^2} \norm{\chi}{L^2} - \omega \Delta t \norm{I_H \varphi^m}{L^2} \norm{\chi}{L^2}
\\
\ge &\, (1+ \omega \Delta t) \norm{\chi}{L^2}^2 + 3 \Delta t \norm{\chi \nabla \chi}{L^2}^2 + C_{coer} \varepsilon^2 \Delta t \norm{\chi}{2,h}^2 - \norm{\phih^{m-1}}{L^2} \norm{\chi}{L^2} 
 \\
&- \sqrt{2} \Delta t \norm{\phih^{m-1}}{L^2} \norm{\chi}{2,h} - \omega C_I H \Delta t \norm{\nabla \chi}{L^2} \norm{\chi}{L^2} - \omega C_I \Delta t \norm{ \varphi^m}{L^2} \norm{\chi}{L^2}
\\
\ge &\, (1+ \omega \Delta t) \norm{\chi}{L^2}^2 + 3 \Delta t \norm{\chi \nabla \chi}{L^2}^2 + C_{coer} \varepsilon^2 \Delta t \norm{\chi}{2,h}^2 - \norm{\phih^{m-1}}{L^2} \norm{\chi}{L^2} 
 \\
&- \sqrt{2} \Delta t \norm{\phih^{m-1}}{L^2} \norm{\chi}{2,h} - \omega C_P C_I H \Delta t \norm{\chi}{2,h} \norm{\chi}{L^2} - \omega C_I \Delta t \norm{ \varphi^m}{L^2} \norm{\chi}{L^2}
\\
\ge &\, (1+ \omega \Delta t) \norm{\chi}{L^2}^2 + 3 \Delta t \norm{\chi \nabla \chi}{L^2}^2 + C_{coer} \varepsilon^2 \Delta t \norm{\chi}{2,h}^2 - \frac{1}{2}\norm{\phih^{m-1}}{L^2}^2 - \frac{1}{2} \norm{\chi}{L^2}^2
 \\
&- \frac{2 \Delta t}{C_{coer} \varepsilon^2} \norm{\phih^{m-1}}{L^2}^2 - \frac{C_{coer} \varepsilon^2 \Delta t}{4}\norm{\chi}{2,h}^2 -  \frac{C_{coer} \varepsilon^2 \Delta t}{4}\norm{\chi}{2,h}^2 
\\
&- \frac{\omega^2 C_P^2 C_I^2 H^2 \Delta t}{C_{coer} \varepsilon^2} \norm{\chi}{L^2}^2 - \frac{\omega C_I^2 \Delta t}{2} \norm{ \varphi^m}{L^2}^2 - \frac{\omega \Delta t}{2} \norm{\chi}{L^2}^2.
\end{align*}
Combining like terms and multiplying by 2 now provides the bound
\begin{align*}
2\iprd{G_h(\chi)}{\chi} \ge& \, \left[1 + \omega \Delta t \left(1 - \frac{2 \omega C_P^2 C_I^2 H^2 }{C_{coer} \varepsilon^2} \right) \right] \norm{\chi}{L^2}^2 + 6 \Delta t \norm{\chi \nabla \chi}{L^2}^2 + C_{coer} \varepsilon^2 \Delta t \norm{\chi}{2,h}^2 
\\
&- \norm{\phih^{m-1}}{L^2}^2 - \frac{4 \Delta t}{C_{coer} \varepsilon^2} \norm{\phih^{m-1}}{L^2}^2 - \omega C_I^2 \Delta t \norm{ \varphi^m}{L^2}^2 
\\
\ge&\, 0,
\end{align*}
if $0 < \omega < \dfrac{C_{coer} \varepsilon^2}{2 C_P^2 C_I^2 H^2 }$ and $\norm{\chi}{L^2}^2 = q^2 = \left(1 + \dfrac{4 \Delta t}{C_{coer} \varepsilon^2} \right) \norm{\phih^{m-1}}{L^2}^2 + \omega C_I^2 \Delta t \norm{ \varphi^m}{L^2}^2 $.
\end{proof}

\begin{lem}\label{lem:stability}
Let $\varphi \in L^\infty(0,\infty;H_N^2(\Omega))$ represent the true solution and black $H$ and $\omega$ be chosen so that
\begin{align}
\lambda_0 := \frac{\omega C_{coer} \varepsilon^2 - 2 \omega^2 C_I^2 C_P^2 H^2 - 4}{C_{coer} \varepsilon^2 + 4 \Delta t} > 0, \label{wH0}
\end{align} 
i.e. $H$ sufficiently small and $\omega$ sufficiently large.
Then, for any $m,h, \Delta t >0$, solutions to the CDA-FEM \eqref{eq:fully-discrete-fem} satisfy
\begin{align*}
\norm{\phih^m}{L^2}^2 \le \norm{\phih^0}{L^2}^2 \left(\frac{1}{1 + \lambda \Delta t}\right)^m + \frac{ \omega \,  \varepsilon^2 C_I^2}{\omega \varepsilon^2 - 2 \omega^2 C_I^2 C_P^2 H^2 -4} \, \Phi \le C_{data},
\end{align*}
where $\Phi := \norm{\varphi}{L^{\infty}(0,\infty;L^2(\Omega))}^2$.
\end{lem}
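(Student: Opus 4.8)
The plan is to test the scheme \eqref{eq:fully-discrete-fem} with $\psi = \phih^m$ and reduce everything to a scalar recursion for $a_m := \norm{\phih^m}{L^2}^2$ to which Lemma \ref{lem:geometric-series-bound} applies. For the time‑derivative term the polarization identity gives $\iprd{\ddt\phih^m}{\phih^m} \ge \tfrac{1}{2\Delta t}\bigl(\norm{\phih^m}{L^2}^2 - \norm{\phih^{m-1}}{L^2}^2\bigr)$. The cubic contribution is nonnegative, $\iprd{\nabla(\phih^m)^3}{\nabla\phih^m} = 3\norm{\phih^m\nabla\phih^m}{L^2}^2 \ge 0$, and will be discarded, as will the coercive part $\varepsilon^2\aIPh{\phih^m}{\phih^m} \ge C_{coer}\varepsilon^2\norm{\phih^m}{2,h}^2$ (by \eqref{eq:coer}) once it has absorbed the indefinite cross terms below. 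The nudging term is split in the usual way as $\omega\iprd{I_H(\phih^m-\varphi^m)}{\phih^m} = \omega\norm{\phih^m}{L^2}^2 + \omega\iprd{I_H\phih^m-\phih^m}{\phih^m} - \omega\iprd{I_H\varphi^m}{\phih^m}$, so that the nudging produces a genuinely dissipative $\omega\norm{\phih^m}{L^2}^2$ term at the cost of a controllable interpolation‑consistency error and a data term.

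Next I would estimate the three indefinite contributions. The coupling $-\iprd{\nabla\phih^{m-1}}{\nabla\phih^m}$ is handled by Lemma \ref{lem:grad-split} (with $\phih^m \in Z_h$), giving $|\iprd{\nabla\phih^{m-1}}{\nabla\phih^m}| \le \sqrt2\,\norm{\phih^m}{2,h}\norm{\phih^{m-1}}{L^2}$; the consistency term is bounded using \eqref{eq:projection-properties-diff} and the Poincar\'e inequality $\norm{\nabla\phih^m}{L^2} \le C_P\norm{\phih^m}{2,h}$ as $|\omega\iprd{I_H\phih^m-\phih^m}{\phih^m}| \le \omega C_I C_P H\,\norm{\phih^m}{2,h}\norm{\phih^m}{L^2}$; and the data term by \eqref{eq:projection-properties-equiv} as $|\omega\iprd{I_H\varphi^m}{\phih^m}| \le \omega C_I\,\norm{\varphi^m}{L^2}\norm{\phih^m}{L^2}$. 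Applying Young's inequality with weights chosen so that the two $\norm{\phih^m}{2,h}^2$ contributions together consume at most $\tfrac12 C_{coer}\varepsilon^2\norm{\phih^m}{2,h}^2$ and the data cross term contributes $\tfrac{\omega}{2}\norm{\phih^m}{L^2}^2 + \tfrac{\omega C_I^2}{2}\norm{\varphi^m}{L^2}^2$, and then dropping the surviving nonnegative $\norm{\phih^m}{2,h}^2$ and $\norm{\phih^m\nabla\phih^m}{L^2}^2$ terms, leads after multiplying by $2\Delta t$ to
\[
\Big(1 + \omega\Delta t - \tfrac{2\omega^2 C_I^2 C_P^2 H^2}{C_{coer}\varepsilon^2}\,\Delta t\Big)\norm{\phih^m}{L^2}^2 \le \Big(1 + \tfrac{4\Delta t}{C_{coer}\varepsilon^2}\Big)\norm{\phih^{m-1}}{L^2}^2 + \omega C_I^2\Delta t\,\norm{\varphi^m}{L^2}^2 .
\]

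Finally I would divide through by $1 + \tfrac{4\Delta t}{C_{coer}\varepsilon^2}$ and clear $C_{coer}\varepsilon^2$ from numerator and denominator of the leading coefficient; a short computation shows this coefficient equals $1 + \lambda_0\Delta t$ with $\lambda_0$ exactly as in \eqref{wH0}, and that the right‑hand data constant becomes $B = \tfrac{\omega C_I^2 C_{coer}\varepsilon^2\Delta t}{C_{coer}\varepsilon^2 + 4\Delta t}\,\Phi$ after bounding $\norm{\varphi^m}{L^2}^2 \le \Phi$. Since hypothesis \eqref{wH0} forces $\lambda_0 > 0$, i.e. $r := 1 + \lambda_0\Delta t > 1$, Lemma \ref{lem:geometric-series-bound} applies with $a_m = \norm{\phih^m}{L^2}^2$ and yields $\norm{\phih^m}{L^2}^2 \le \norm{\phih^0}{L^2}^2(1+\lambda_0\Delta t)^{-m} + \tfrac{B}{\lambda_0\Delta t}$; using the identity $(C_{coer}\varepsilon^2 + 4\Delta t)\lambda_0 = \omega C_{coer}\varepsilon^2 - 2\omega^2 C_I^2 C_P^2 H^2 - 4$ reduces $\tfrac{B}{\lambda_0\Delta t}$ to the stated (and $\Delta t$‑independent) second term, and $(1+\lambda_0\Delta t)^{-m}\le 1$ gives the uniform bound $C_{data} := \norm{\phih^0}{L^2}^2 + \tfrac{B}{\lambda_0\Delta t}$, independent of $m$, $h$, and $\Delta t$. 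The only delicate point is the choice of Young's‑inequality parameters in the second step: they must keep the $\norm{\phih^m}{2,h}^2$ coefficient positive while producing precisely the combination $\omega C_{coer}\varepsilon^2 - 2\omega^2 C_I^2 C_P^2 H^2 - 4$ in the decay rate, which is exactly what dictates the smallness/largeness condition \eqref{wH0} on $H$ and $\omega$; the rest is bookkeeping.
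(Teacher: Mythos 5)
Your proposal is correct and follows essentially the same route as the paper's proof: testing with $\psi=\phih^m$, splitting the nudging term to extract $\omega\norm{\phih^m}{L^2}^2$, bounding the coupling, consistency, and data terms via Lemma \ref{lem:grad-split}, \eqref{eq:projection-properties-diff}--\eqref{eq:projection-properties-equiv}, and Young's inequality with the same weights, arriving at the identical recursion and closing with Lemma \ref{lem:geometric-series-bound}. Your final constant $\frac{\omega C_I^2 C_{coer}\varepsilon^2}{\omega C_{coer}\varepsilon^2 - 2\omega^2 C_I^2 C_P^2 H^2 - 4}\,\Phi$ matches what the paper's proof actually derives (the lemma statement omits the $C_{coer}$ factors, which appears to be a typo there, not a flaw in your argument).
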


\begin{proof}
Setting $\psi = \phih^{m}$ in \eqref{eq:fully-discrete-fem} we have
\begin{align*}
\iprd{\ddt \phih^{m}}{ \phih^m} + 3 \norm{\phih^{m} \nabla \phih^m}{L^2}^2 - \iprd{\nabla \phih^{m-1}}{\nabla \phih^m} + \varepsilon^2 \aIPh{\phih^{m}}{\phih^m} + \omega \iprd{I_H (\phih^m - \varphi^m)}{\phih^m}  =0.
\end{align*}
Now, adding and subtracting appropriate terms and using Lemma \ref{lem:grad-split} along with properties \eqref{eq:projection-properties-diff}, \eqref{eq:projection-properties-equiv}, and \eqref{eq:coer},  Young's inequality, and the polarization identity,
we have
\begin{align*}
&\frac{1}{2\Delta t} \left(\norm{\phih^m}{L^2}^2 - \norm{\phih^{m-1}}{L^2}^2 + \norm{\phih^m - \phih^{m-1}}{L^2}^2\right) + \omega \norm{\phih^m}{L^2}^2 + 3 \norm{\phih^{m} \nabla \phih^m}{L^2}^2 + C_{coer} \varepsilon^2 \norm{\phih^{m}}{2,h}^2 
\\
&\quad\le \omega \iprd{\phih^m - I_H \phih^m}{ \phih^m} + \omega \iprd{I_H \varphi^m}{ \phih^m} + \iprd{\nabla \phih^{m-1}}{\nabla \phih^m}
\\
&\quad\le \omega \norm{\phih^m - I_H \phih^m}{L^2} \norm{\phih^m}{L^2} + \omega \norm{I_H \varphi^m}{L^2} \norm{\phih^m}{L^2} + \sqrt{2}\norm{\phih^{m-1}}{L^2} \norm{\phih^m}{2,h}
\\
&\quad\le \omega \, C_I H \norm{\nabla \phih^m}{L^2} \norm{ \phih^m}{L^2} + \omega \, C_I \norm{\varphi^m}{L^2} \norm{ \phih^m}{L^2} +  \sqrt{2} \norm{\phih^{m-1}}{L^2} \norm{\phih^m}{2,h}
\\
&\quad\le \omega \, C_I C_P H \norm{\phih^m}{2,h} \norm{ \phih^m}{L^2} + \omega \, C_I \norm{\varphi^m}{L^2} \norm{ \phih^m}{L^2} + \sqrt{2}\norm{\phih^{m-1}}{L^2} \norm{\phih^m}{2,h}
\\
&\quad\le \frac{\omega^2 \, C_I^2 C_P^2 H^2}{C_{coer} \varepsilon^2} \norm{ \phih^m}{L^2}^2 + \frac{C_{coer} \varepsilon^2}{2} \norm{\phih^m}{2,h}^2 + \frac{\omega C_I^2}{2}\norm{\varphi^m}{L^2}^2 + \frac{\omega}{2} \norm{ \phih^m}{L^2}^2 + \frac{2}{C_{coer}\varepsilon^2} \norm{ \phih^{m-1}}{L^2}^2 .
\end{align*}
Combining like terms, multiplying by $2\Delta t$, and dropping 
positive terms on the left hand side of the equation above, we get that
\begin{align*}
\left[1 + \omega \, \Delta t - \frac{2 \omega^2 \, C_I^2 C_P^2 H^2 \, \Delta t}{C_{coer} \varepsilon^2}\right] \norm{\phih^m}{L^2}^2 
\le  \frac{C_{coer} \varepsilon^2 + 4 \Delta t}{C_{coer} \varepsilon^2} \norm{\phih^{m-1}}{L^2}^2  + \omega \, C_I^2 \Delta t \norm{\varphi^m}{L^2}^2.
\end{align*}
Multiplying by $\frac{C_{coer} \varepsilon^2}{C_{coer} \varepsilon^2 + 4 \Delta t} $, we obtain
\begin{align*}
\left[\frac{C_{coer} \varepsilon^2 (1+ \omega \Delta t) - 2 \omega^2 C_I^2 C_P^2 H^2 \Delta t }{C_{coer} \varepsilon^2 + 4 \Delta t}\right] \norm{\phih^m}{L^2}^2 &\le  \norm{\phih^{m-1}}{L^2}^2  +  \frac{\omega \, C_I^2 C_{coer} \varepsilon^2 \Delta t}{C_{coer} \varepsilon^2 + 4 \Delta t} \norm{\varphi^m}{L^2}^2,
\end{align*}
which leads to 
\begin{align*}
\left[1+ \Delta t \left( \frac{\omega C_{coer} \varepsilon^2 - 2 \omega^2 C_I^2 C_P^2 H^2 - 4}{C_{coer} \varepsilon^2 + 4 \Delta t} \right) \right] \norm{\phih^m}{L^2}^2&\le  \norm{\phih^{m-1}}{L^2}^2  + \frac{\omega \, C_I^2 C_{coer} \varepsilon^2 \Delta t}{C_{coer} \varepsilon^2 + 4 \Delta t}  \Phi.
\end{align*}
 Requiring $\ds \lambda_0 := \frac{\omega C_{coer} \varepsilon^2 - 2 \omega^2 C_I^2 C_P^2 H^2 - 4}{C_{coer} \varepsilon^2 + 4 \Delta t}> 0$ and applying Lemma \ref{lem:geometric-series-bound} yields the desired result: for any $m, h, \Delta t >0$
\begin{align*}
\norm{\phih^m}{L^2}^2 \le \norm{\phih^0}{L^2}^2 \left(\frac{1}{1 + \lambda_0 \Delta t}\right)^m + \frac{ \omega C_I^2 C_{coer} \varepsilon^2}{\omega C_{coer}\varepsilon^2 - 2 \omega^2 C_I^2 C_P^2 H^2 -4 } \, \Phi \le C_{data}.
\end{align*}
\end{proof}

\begin{rem}
Lemma \ref{lem:stability} is satisfied if the nudging parameter $\omega$ is $\mathcal{O}(\varepsilon^{-2})$ which would then require $H$ to be $\mathcal{O}(\varepsilon^2)$. However, the numerical experiments provided in Section \ref{sec:numerical-experiments} suggests that $H$ can be taken much larger than that.
\end{rem}

Define $C_{inf}:=\sup_{0\le m < \infty} \| \phi_h^m \|_{L^{\infty}}$.  Then under the conditions from Lemma \ref{lem:stability}, we have $C_{inf}<\infty$ on any regular mesh since
by the inverse inequality and the 2D Agmon inequality, 
\[
C_{inf} = \sup_{0\le m < \infty} \| \phi_h^m \|_{L^{\infty}} \le \sup_{0\le m < \infty} Ch^{-1}  \| \phi_h^m \|_{L^{2}} \le CC_{data}h^{-1}.
\]
While this is the best long-time $L^{\infty}$ bound we were able to prove, we expect $C_{inf}=O(1)$ and not $O(h^{-1})$, since computing CH in practice yields
$|\phi_h^m(x)|\le 1.1$ or $1.2$, and never as high as 2.  Furthermore, if we assume a finite end time $T$, then it is likely that we can prove this with the usual techniques \cite{DFW:15:CHDS}
that $ \max_{0\le m\le T/\Delta t} \| \phi_h^m \|_{L^{\infty}}\le C e^T$, with $C$ depending on data and independent of $h$ and $\Delta t$.  Since CH solutions generally converge quickly to a steady state solution, we expect 
$ \max_{0\le m\le T/\Delta t} \| \phi_h^m \|_{L^{\infty}}= \sup_{0\le m < \infty} \| \phi_h^m \|_{L^{\infty}},$ from which we again infer that $C_{inf}$ will be independent of $h$.


\begin{lem}\label{lem:uniqueness}
Let $\varphi \in L^\infty(0,{\infty};H_N^2(\Omega))$ represent the true solution and suppose that 
\begin{equation}
\frac{1}{\Delta t}  + \omega >  \left( \frac{C_I^2 C_P^2 H^2 \omega^2 + 18 (C_{inf})^4}{C_{coer} \varepsilon^2}  \right). \label{cfl1}
\end{equation}
Then solutions to the CDA-FEM \eqref{eq:fully-discrete-fem} are unique.
\end{lem}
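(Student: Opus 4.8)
\section*{Proof proposal}

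The plan is a single-step energy estimate of the standard type. Suppose $\phih^m$ and $\tilde\phih^m$ both satisfy \eqref{eq:fully-discrete-fem} with the same given $\phih^{m-1}$ and $\varphi^m$, and set $e := \phih^m - \tilde\phih^m \in Z_h$. Subtracting the two instances of \eqref{eq:fully-discrete-fem}, all terms involving $\phih^{m-1}$ or $\varphi^m$ cancel --- in particular the cubic term collapses to $(\phih^m)^3 - (\tilde\phih^m)^3$, and the nudging term to $\omega\iprd{I_H e}{\psi}$ by linearity of $I_H$ --- leaving, for all $\psi \in Z_h$,
\[
\frac{1}{\Delta t}\iprd{e}{\psi} + \iprd{\nabla\big((\phih^m)^3 - (\tilde\phih^m)^3\big)}{\nabla\psi} + \varepsilon^2\aIPh{e}{\psi} + \omega\iprd{I_H e}{\psi} = 0 .
\]
Taking $\psi = e$ turns the first term into $\norm{e}{L^2}^2/\Delta t$, makes $\varepsilon^2\aIPh{e}{e} \ge C_{coer}\varepsilon^2\norm{e}{2,h}^2$ by \eqref{eq:coer}, and, after writing $\omega\iprd{I_H e}{e} = \omega\norm{e}{L^2}^2 - \omega\iprd{e - I_H e}{e}$, leaves exactly two terms to move to the right-hand side and absorb.

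The nudging remainder I would control with \eqref{eq:projection-properties-diff}, the Poincar\'e inequality $\norm{\nabla e}{L^2} \le C_P\norm{e}{2,h}$, and Young's inequality:
\[
\omega\left|\iprd{e - I_H e}{e}\right| \le \omega C_I C_P H\,\norm{e}{2,h}\norm{e}{L^2} \le \frac{C_{coer}\varepsilon^2}{4}\norm{e}{2,h}^2 + \frac{\omega^2 C_I^2 C_P^2 H^2}{C_{coer}\varepsilon^2}\norm{e}{L^2}^2 .
\]
The cubic term is the only genuinely interesting piece: using the factorization $(\phih^m)^3 - (\tilde\phih^m)^3 = e\,f$ with $f := (\phih^m)^2 + \phih^m\tilde\phih^m + (\tilde\phih^m)^2$, and noting that $f$ is a continuous piecewise polynomial (so $e f \in H^1(\Omega)$) with $\norm{f}{L^\infty} \le 3(C_{inf})^2$, Lemma \ref{lem:grad-split} applied with $w = e \in Z_h$ and $v = e f \in H^1(\Omega)$ (using symmetry of the $L^2$ inner product) gives
\[
\left|\iprd{\nabla(e f)}{\nabla e}\right| \le \sqrt{2}\,\norm{e}{2,h}\norm{e f}{L^2} \le 3\sqrt{2}\,(C_{inf})^2\norm{e}{2,h}\norm{e}{L^2} \le \frac{C_{coer}\varepsilon^2}{4}\norm{e}{2,h}^2 + \frac{18(C_{inf})^4}{C_{coer}\varepsilon^2}\norm{e}{L^2}^2 ,
\]
the last inequality again by Young.

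Adding the two bounds, the $\norm{e}{2,h}^2$ contributions on the right total $\tfrac{C_{coer}\varepsilon^2}{2}\norm{e}{2,h}^2$ and are absorbed by the $C_{coer}\varepsilon^2\norm{e}{2,h}^2$ on the left; dropping that nonnegative term leaves
\[
\left(\frac{1}{\Delta t} + \omega\right)\norm{e}{L^2}^2 \le \frac{C_I^2 C_P^2 H^2\omega^2 + 18(C_{inf})^4}{C_{coer}\varepsilon^2}\,\norm{e}{L^2}^2 ,
\]
whence hypothesis \eqref{cfl1} forces $\norm{e}{L^2} = 0$, i.e. $\phih^m = \tilde\phih^m$. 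The only step requiring care --- and the main obstacle --- is the uniform $L^\infty$ bound $\norm{f}{L^\infty} \le 3(C_{inf})^2$, i.e. the fact that both candidate solutions are simultaneously controlled by $C_{inf}$; this follows exactly as in the remark preceding the lemma, since Lemma \ref{lem:stability} bounds $\norm{\phih^m}{L^2}$ for any solution and the inverse inequality together with the 2D Agmon inequality upgrade this to a uniform $L^\infty$ bound. Everything else is routine bookkeeping with Young's inequality and the previously established lemmas.
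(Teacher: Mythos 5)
Your proposal is correct and follows essentially the same route as the paper's proof: subtract the two solutions, test with the difference, split the nudging term as $\omega\norm{e}{L^2}^2 - \omega\iprd{e - I_H e}{e}$, bound the cubic difference via Lemma \ref{lem:grad-split} together with the factorization and the $L^\infty$ bound $3(C_{inf})^2$, and absorb with Young's inequality to reach the same final inequality. Your explicit remark that both candidate solutions are controlled by $C_{inf}$ (via Lemma \ref{lem:stability}, the inverse inequality, and Agmon) is a point the paper leaves implicit, but it does not change the argument.
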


\begin{rem}
The condition \eqref{cfl1} is satisfied with a sufficiently small $\Delta t$, or by $\omega$ being sufficiently large while $H$ is sufficiently small.  We note that 
this is a sufficient condition.
\end{rem}

\begin{proof}
Let $\phih^{m-1} \in Z_h$ and $\varphi^m \in H_N^2(\Omega)$ be given. Let $U$ and $V$ represent two solutions to \eqref{eq:fully-discrete-fem}. Then,
\begin{align*}
\frac{1}{\Delta t} \iprd{U-V}{ \psi} + \iprd{\nabla (U^3 - V^3)}{\nabla \psi} + \varepsilon^2 \aIPh{U-V}{\psi} + \omega \iprd{I_H \left(U - V\right)}{\psi} =0 .
\end{align*}
Setting $\psi = U-V$ and invoking Lemma \ref{lem:grad-split}, we have 
\begin{align*}
&\left(\frac{1}{\Delta t} + \omega \right) \norm{U-V}{L^2}^2 +  C_{coer} \varepsilon^2 \norm{U-V}{2,h}^2  
\\
&\quad= - \iprd{\nabla (U^3-V^3)}{\nabla (U-V)} + \omega \iprd{(U-V) - I_H (U- V)}{U-V} 
\\
&\quad\le \sqrt{2}\norm{U^3-V^3}{L^2} \norm{U-V}{2,h} + \omega \norm{(U-V) - I_H (U- V)}{L^2}  \norm{U-V}{L^2}
\\
&\quad\le  \sqrt{2} \norm{ U^2 + UV + V^2}{L^\infty} \norm{U-V}{L^2} \norm{U-V}{2,h} + C_I H \omega \norm{\nabla (U- V)}{L^2}  \norm{U-V}{L^2}
\\
&\quad\le 3\sqrt{2} (C_{inf})^2 \norm{U-V}{L^2} \norm{U-V}{2,h} + C_I C_P H \omega \norm{U- V}{2,h}  \norm{U-V}{L^2}
\\
&\quad\le \frac{18 (C_{inf})^4}{C_{coer} \varepsilon^2} \norm{U-V}{L^2}^2 + \frac{C_{coer}\varepsilon^2}{2} \norm{U-V}{2,h}^2 + \frac{C_I^2 C_P^2 H^2 \omega^2}{C_{coer} \varepsilon^2}  \norm{U-V}{L^2}^2
\end{align*}
Combining like terms leads to
\begin{align*}
\left( \left(\frac{1}{\Delta t}  + \omega \right) -  \left( \frac{C_I^2 C_P^2 H^2 \omega^2 + 18 (C_{inf})^4}{C_{coer} \varepsilon^2}  \right) \right) \norm{U-V}{L^2}^2 + \frac{C_{coer} \varepsilon^2}{2} \norm{U-V}{2,h}^2 \le 0.
\end{align*}
Therefore, the solution to \eqref{eq:fully-discrete-fem} is unique under the condition that \eqref{cfl1} holds.
\end{proof}

\section{Error Estimates}\label{sec:convergence}

We are now in a position to prove that the global in time error estimates may be established in the $L^2$ norm. We provide a rigorous convergence analysis for the semi-discrete method in the appropriate energy norms.  Note that the CDA-FEM \eqref{eq:fully-discrete-fem} is not well-defined for solutions to \eqref{eq:continuous-da-problem} since $Z_h \not\subset H_N^2(\Omega)$. Therefore, we define $W_h \subset H_N^2(\Omega)$ to be the Hsieh-Clough-Tocher micro finite element space associated with $\TTh$ as in \cite{brenner:11:frontiers}. We furthermore define the linear map $E_h: Z_h \rightarrow W_h \cap H_N^2(\Omega)$ as in \cite{brenner:11:frontiers} which allows us to consider the following problem: Find $\varphi \in H_N^2(\Omega) $ such that
\begin{align}
\label{eq:cont-correction}
&\iprd{\partial_t \varphi}{\psi} + \iprd{\nabla \left(\varphi^3 - \varphi\right)}{\nabla \psi} + \varepsilon^2 \aIPh{\varphi}{\psi} 
\nonumber
\\
&\, =  \iprd{\partial_t \varphi}{\psi - E_h \psi} + \iprd{\nabla \left(\varphi^3 - \varphi\right)}{\nabla (\psi - E_h \psi)} + \varepsilon^2 \aIPh{\varphi}{ \psi - E_h \psi}, \quad \forall \, \psi \in Z_h .
\end{align}
Solutions of \eqref{eq:cont-correction} are consistent with solutions of \eqref{eq:weak-formulation} since $\aIPh{\varphi}{E_h \psi} = \aiprd{\varphi}{E_h \psi}$ for all $\psi \in Z_h$. 

We introduce the following notation:
\begin{align*}
\err^{m} = \errP^{m} + \errh^{m}, \quad \errP^{m} := \varphi^m - P_h \varphi^m, \quad \errh^{m} := P_h \varphi^m - \phih^m,
\end{align*}
where $\varphi^m := \varphi(t_m)$. Using this notation and subtracting \eqref{eq:fully-discrete-fem} from \eqref{eq:cont-correction}, we have for all $\psi \in Z_h$
\begin{align*}
&\iprd{\ddt \err^{m}}{\psi} + \varepsilon^2 \aIPh{\err^{m}}{\psi} + \omega \iprd{I_H \err^{m}}{\psi} +\iprd{\nabla \left((\varphi^m)^3 - (\phih^m)^3 \right)}{\nabla \psi} 
\\
&\quad= \iprd{\ddt \varphi^{m} - \partial_t \varphi^{m}}{\psi} + \iprd{\nabla \err^{m-1}}{\nabla \psi} +\iprd{\nabla (\varphi^m - \varphi^{m-1})}{\nabla \psi} 
\\
&\qquad+ \iprd{\partial_t \varphi^{m}}{\psi - E_h \psi} + \iprd{\nabla \left(\left(\varphi^m\right)^3 - \varphi^{m}\right)}{\nabla (\psi - E_h \psi)} + \varepsilon^2 \aIPh{\varphi^{m}}{ \psi - E_h \psi} .
\end{align*}
Invoking the properties of the Ritz projection operator, we have for all $\psi \in Z_h$
\begin{align}
&\iprd{\ddt \errh^{m}}{\psi} + \varepsilon^2 \aIPh{\errh^{m}}{\psi} + \omega \iprd{I_H \errh^{m}}{\psi} +\iprd{\nabla \left( (\varphi^m)^3 - (\phih^m)^3 \right)}{\nabla \psi} = -\iprd{\ddt \errP^{m}}{\psi} 
\nonumber
\\
&\quad - \omega \iprd{I_H \errP^{m}}{\psi} + \iprd{\ddt \varphi^{m} - \partial_t \varphi^{m}}{\psi} + \iprd{\nabla (\errP^{m-1} + \errh^{m-1})}{\nabla \psi} + \iprd{\nabla (\varphi^m - \varphi^{m-1})}{\nabla \psi} 
\nonumber
\\
&\qquad+ \iprd{\partial_t \varphi^{m}}{\psi - E_h \psi} + \iprd{\nabla \left(\left(\varphi^m\right)^3 - \varphi^{m}\right)}{\nabla (\psi - E_h \psi)} + \varepsilon^2 \aIPh{\varphi^{m}}{ \psi - E_h \psi} .
\label{eq:error-formulation-semi}
\end{align}
Adding and subtracting appropriate terms and setting $\psi = \errh^{m}$, we arrive at the key error equation
\begin{align}\label{eq:main-error}
&\iprd{\ddt \errh^{m}}{\errh^{m}} + \varepsilon^2 \aIPh{\errh^{m}}{\errh^{m}} + \omega \iprd{\errh^{m}}{\errh^{m}} 
\nonumber
\\
&\quad= -\iprd{\ddt \errP^{m}}{\errh^{m}}  - \omega \iprd{I_H \errP^{m}}{\errh^{m}} + \omega \iprd{\errh^{m} - I_H \errh^{m}}{\errh^{m}} + \iprd{\ddt \varphi^{m} - \partial_t \varphi^{m}}{\errh^{m}} 
\nonumber
\\
&\qquad- \iprd{ \nabla \left( (\varphi^m)^3 - (\phih^m)^3 \right)}{\nabla \errh^m} + \iprd{\nabla (\errP^{m-1} + \errh^{m-1})}{\nabla \errh^{m}} + \iprd{\nabla (\varphi^m - \varphi^{m-1})}{\nabla \errh^{m}} 
\nonumber
\\
&\qquad \quad+ \iprd{\partial_t \varphi^{m}}{\errh^{m} - E_h \errh^{m}} + \iprd{\nabla \left(\left(\varphi^m\right)^3 - \varphi^{m}\right)}{\nabla (\errh^{m} - E_h \errh^{m})} + \varepsilon^2 \aIPh{\varphi^{m}}{ \errh^{m} - E_h \errh^{m}} .
\end{align}

The following lemma will bound many of the terms on the right hand side of \eqref{eq:main-error} by oscillations in the time derivative of the concentration $\partial_t \varphi$. The procedure, known as a medius analysis, has been utilized in much of the literature found on the C$^0$-IP method and details can be found in \cite{brenner:11:frontiers}. It relies on an equivalent formulation of the bilinear form $\aIPh{\cdot}{\cdot}$ for functions satisfying $w \in H^4(\Omega,\TTh) \cap H^1(\Omega)$ and $v \in H^2(\Omega, \TTh) \cap H^1(\Omega)$:
\begin{align}
\label{eq:aIPh-def-alt}
\aIPh{w}{v} :=& \, \sum_{K \in \TTh}  \int_K \left(\Delta^2 w\right) v \, dx + \sum_{e \in \Eh} \int_e \dgal[\Bigg]{\frac{\partial^2 v}{\partial n_e^2} } \left\llbracket \frac{\partial w}{\partial n_e} \right\rrbracket dS 
\nonumber
\\
&+ \sum_{e \in \Eh} \int_e \left( \left\llbracket \frac{\partial \Delta w}{\partial n_e} \right\rrbracket v -  \left\llbracket \frac{\partial^2 w}{\partial n_e^2} \right\rrbracket \dgal[\Bigg]{\frac{\partial v}{\partial n_e} } - \left\llbracket \frac{\partial^2 w}{\partial n_e \partial t_e} \right\rrbracket \frac{\partial v}{\partial t_e} \right) dS
\nonumber
\\
&+ \sigma \sum_{e \in \Eh} \frac{1}{|e|} \int_e  \left\llbracket \frac{\partial w}{\partial n_e} \right\rrbracket  \left\llbracket \frac{\partial v}{\partial n_e} \right\rrbracket dS,
\end{align}
where $H^s(\Omega,\TTh) := \{v \in L^2(\Omega) | v_K \in H^s(K) \forall K \in \TTh\}$.

\begin{lem}\label{lem:bounds-oscillations}
Suppose $\varphi^{m}$ is a weak solution to \eqref{eq:weak-formulation}. Then for any $m, h,\tau > 0$,
\begin{align*}
& \iprd{\partial_t \varphi^{m}}{\errh^{m} - E_h \errh^{m}} + \iprd{\nabla \left(\left(\varphi^m\right)^3 - \varphi^{m}\right)}{\nabla (\errh^{m} - E_h \errh^{m})} + \varepsilon^2 \aIPh{\varphi^{m}}{ \errh^{m} - E_h \errh^{m}}
\\
&\quad\le\frac{C}{C_{coer} \varepsilon^2} \left( \left[\text{Osc}_j(\partial_t \varphi^m)\right]^2 + \norm{ \varphi^{m} - P_h \varphi^m}{2,h}^2 \right) + \frac{C_{coer} \varepsilon^2}{8} \norm{\errh^{m}}{2,h}^2
\end{align*}
for $t^* \in (t_{m-1}, t_{m})$ where $\text{Osc}_j(\nu)$ is referred to as the oscillation of $\nu$ (of order $j$) defined by 
\begin{align} \label{eq:data-osc}
\text{Osc}_j(\nu):= \left( \sum\limits_{K \in \TTh} h^4 \norm{\nu - \tilde{\nu}}{L^2(K)}^2 \right)^{\frac{1}{2}}
 \end{align}
and where $ \tilde{\nu}$ is the $L^2$ orthogonal projection of $\nu$ on $P_j(\Omega, \TTh)$, the space of piecewise polynomial functions of degree less than or equal to $j$, i.e.,
\begin{align*}
\int_\Omega (\nu -  \tilde{\nu}) \psi \, dx = 0 \quad \forall \, \psi \in P_j(\Omega, \TTh).
\end{align*}
\end{lem}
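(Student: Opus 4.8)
The plan is to carry out a \emph{medius} analysis in the style of \cite{brenner:11:frontiers}. Throughout, write $\chi := \errh^m - E_h\errh^m$ for the nonconforming defect. Since $E_h\errh^m$ lies in the $C^1$-conforming space $W_h\cap H_N^2(\Omega)$, one has $\llbracket \partial\chi/\partial n_e\rrbracket = \llbracket \partial\errh^m/\partial n_e\rrbracket$ on every edge, and $E_h$ obeys the local approximation/stability estimates (see \cite{brenner:11:frontiers}) $\norm{\chi}{L^2(K)}\le Ch_K^2\,\norm{\errh^m}{2,h,\omega_K}$, $\norm{\nabla\chi}{L^2(K)}\le Ch_K\,\norm{\errh^m}{2,h,\omega_K}$, $|\chi|_{H^2(K)}\le C\,\norm{\errh^m}{2,h,\omega_K}$ on the element patch $\omega_K$, together with $\norm{E_h\errh^m}{2,h}\le C\norm{\errh^m}{2,h}$ and the finite-overlap estimate $\sum_{K\in\TTh}\norm{\errh^m}{2,h,\omega_K}^2\le C\norm{\errh^m}{2,h}^2$. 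These bounds, which turn edge jumps and element seminorms of $\errh^m$ into control on $\chi$ in weaker norms scaled by powers of $h$, are the technical engine of the argument.

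First I would integrate the left-hand side by parts element by element, applying \eqref{eq:integration-by-parts-lower-order} to the gradient term and the alternative representation \eqref{eq:aIPh-def-alt} of $\aIPh{\cdot}{\cdot}$ to the Hessian term (this is the point at which the piecewise $H^4$ regularity of $\varphi^m$, implicit in using \eqref{eq:aIPh-def-alt}, is needed). The interior volume contributions then assemble into $\sum_{K\in\TTh}\int_K\bigl(\partial_t\varphi^m-\Delta((\varphi^m)^3-\varphi^m)+\varepsilon^2\Delta^2\varphi^m\bigr)\chi\,dx$, while the boundary-edge residue cancels against the boundary term $\int_{\partial\Omega}\partial_n((\varphi^m)^3-\varphi^m)\chi\,dS$ by virtue of the boundary conditions satisfied by the weak solution $\varphi^m$. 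For the volume term I keep the forcing $\partial_t\varphi^m$ visible: inserting its $L^2(\Omega)$-orthogonal piecewise-$P_j$ projection $\widetilde{\partial_t\varphi^m}$, and using the orthogonality to subtract an elementwise mean from $\chi$, the local estimates above together with a Cauchy--Schwarz over $\TTh$ and the finite-overlap bound give this term the size $C\,\text{Osc}_j(\partial_t\varphi^m)\,\norm{\errh^m}{2,h}$. The leftover interface terms, which involve traces of second (and third) derivatives of $\varphi^m$ paired against $\llbracket\partial\errh^m/\partial n_e\rrbracket$ and against $\chi$, are handled by inserting the Ritz projection $\varphi^m=P_h\varphi^m+(\varphi^m-P_h\varphi^m)$: the $P_h\varphi^m$ part is removed using the orthogonality \eqref{eq:H2-Ritz-projection} (up to a residue involving $E_h\errh^m$, bounded via $\norm{E_h\errh^m}{2,h}\le C\norm{\errh^m}{2,h}$), and the remainder, after scaled trace inequalities and Cauchy--Schwarz on the elementwise Hessian products, is bounded by $C\,\norm{\varphi^m-P_h\varphi^m}{2,h}\,\norm{\errh^m}{2,h}$.

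Collecting these pieces and applying Young's inequality to each product $\norm{\cdot}{2,h}\,\norm{\errh^m}{2,h}$, with the free weights tuned so that the accumulated coefficient of $\norm{\errh^m}{2,h}^2$ is at most $\tfrac{C_{coer}\varepsilon^2}{8}$ while the complementary terms carry the factor $\tfrac{C}{C_{coer}\varepsilon^2}$, yields the asserted estimate. I expect the main obstacle to be not any single inequality but the bookkeeping of the elementwise integration by parts combined with the precise formulation and use of the approximation and stability properties of $\errh^m-E_h\errh^m$ relative to the broken norm $\norm{\cdot}{2,h}$: arranging the edge terms so that the jumps of $\varphi^m$ drop out (because $\varphi^m\in H_N^2(\Omega)$) while the surviving contributions collapse exactly onto $\text{Osc}_j(\partial_t\varphi^m)$ and $\norm{\varphi^m-P_h\varphi^m}{2,h}$ is the delicate step, and it is precisely the content of the medius argument developed in \cite{brenner:11:frontiers}.
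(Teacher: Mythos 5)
Your overall strategy (the enrichment operator $E_h$ with its local approximation estimates, elementwise integration by parts, medius-style bounds, and a final Young step calibrated to $\tfrac{C_{coer}\varepsilon^2}{8}$) is in the right family, but the order in which you bring in the Ritz projection creates a genuine gap. You apply the piecewise strong representation \eqref{eq:aIPh-def-alt} directly to $\varphi^m$, which presupposes $\varphi^m\in H^4(\Omega,\TTh)$ together with well-defined edge traces of $\partial^2\varphi^m/\partial n_e^2$ and $\partial\Delta\varphi^m/\partial n_e$ (and the pointwise natural condition $\partial_n\Delta\varphi^m=0$ on $\partial\Omega$ for your boundary cancellation). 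None of this is available under the lemma's hypothesis that $\varphi^m$ is only a weak solution of \eqref{eq:weak-formulation}; avoiding exactly this is the point of the medius argument and of the paper's proof. The paper first splits $\varphi^m=(\varphi^m-P_h\varphi^m)+P_h\varphi^m$ and uses the orthogonality \eqref{eq:H2-Ritz-projection}, so that \eqref{eq:aIPh-def-alt} is ever applied only to the piecewise polynomial $P_h\varphi^m$; all second- and third-derivative edge terms then involve $P_h\varphi^m$ (or the difference $P_h\varphi^m-\varphi^m$, measured in $\norm{\cdot}{2,h}$), never traces of higher derivatives of $\varphi^m$ itself.

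Relatedly, your treatment of the volume term is internally inconsistent. If the strong residual $\partial_t\varphi^m-\Delta\left((\varphi^m)^3-\varphi^m\right)+\varepsilon^2\Delta^2\varphi^m$ really assembled elementwise, then under the regularity you are implicitly assuming it would vanish identically, and no oscillation term could arise; conversely, your device of inserting $\widetilde{\partial_t\varphi^m}$ and subtracting an elementwise mean from $\chi$ exploits $L^2$-orthogonality to $P_j(\Omega,\TTh)$, which only disposes of the companion contributions if the factor multiplying $\chi$ (apart from $\partial_t\varphi^m-\widetilde{\partial_t\varphi^m}$) is itself a piecewise polynomial of degree at most $j$ --- and $-\Delta\left((\varphi^m)^3-\varphi^m\right)+\varepsilon^2\Delta^2\varphi^m$ is not. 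In the paper the residual that appears is $\varepsilon^2\Delta^2 P_h\varphi^m-\Delta\left((\varphi^m)^3-\varphi^m\right)+\partial_t\varphi^m$, with $\Delta^2 P_h\varphi^m$ piecewise polynomial; the weighted bound $\bigl(\sum_{K\in\TTh}h^4\norm{\cdot}{L^2(K)}^2\bigr)^{1/2}\norm{\errh^m}{2,h}$ combined with Brenner's medius estimates (which use the weak formulation and inverse estimates) is what produces exactly $\text{Osc}_j(\partial_t\varphi^m)$ plus terms controlled by $\norm{\varphi^m-P_h\varphi^m}{2,h}$. The fix is structural rather than computational: insert $P_h\varphi^m$ before any elementwise integration by parts of the fourth-order term; after that, your remaining steps (enrichment estimates for $\errh^m-E_h\errh^m$, Cauchy--Schwarz over elements and edges, Young's inequality) proceed essentially as in the paper.
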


\begin{proof}
Properties of the Ritz projection operator \eqref{eq:H2-Ritz-projection} lead to,
\begin{align}\label{eq:aiph-split}
&\aIPh{\varphi^m}{\errh^{m} - E_h \errh^{m}} 
\nonumber
\\
&\quad= \aIPh{\varphi^m - P_h \varphi^m}{\errh^{m} -E_h \errh^{m}} + \aIPh{P_h \varphi^m}{\errh^{m} -E_h \errh^{m}} 
\nonumber
\\
&\quad= -\aIPh{\varphi^m - P_h \varphi^m}{E_h \errh^{m}} + \aIPh{P_h \varphi^m}{\errh^{m} -E_h \errh^{m}} 
\nonumber
\\
&\quad = \, \sum_{K \in \TTh}  \int_K \nabla^2 (P_h \varphi^m - \varphi^m) : \nabla^2 (E_h \errh^m) dx +  \sum_{e \in \Eh} \int_e \dgal[\Bigg]{\frac{\partial^2  \left(E_h \errh^{m}\right)}{\partial n_e^2} } \left\llbracket \frac{\partial P_h \varphi^{m}}{\partial n_e} \right\rrbracket dS
\nonumber
\\
&\qquad + \aIPh{P_h \varphi^m}{\errh^{m} -E_h \errh^{m}} 
\end{align}
Furthermore, the alternative definition \eqref{eq:aIPh-def-alt} yields the following
\begin{align}\label{eq:aiph-with-projection}
& \aIPh{P_h \varphi^{m}}{ \errh^{m} - E_h \errh^{m}} = \, \sum_{K \in \TTh}  \int_K \left(\Delta^2 P_h \varphi^m \right) \left(\errh^{m} -E_h \errh^{m}\right) \, dx 
\nonumber
\\
&\quad+ \sum_{e \in \Eh} \int_e \dgal[\Bigg]{\frac{\partial^2  \left(\errh^{m} -E_h \errh^{m}\right)}{\partial n_e^2} } \left\llbracket \frac{\partial P_h \varphi^{m}}{\partial n_e} \right\rrbracket dS + \sum_{e \in \Eh} \int_e \left\llbracket \frac{\partial \Delta P_h \varphi^{m}}{\partial n_e} \right\rrbracket \left(\errh^{m} -E_h \errh^{m}\right) dS
\nonumber
\\
&\quad - \sum_{e \in \Eh} \int_e  \left\llbracket \frac{\partial^2 P_h \varphi^m}{\partial n_e^2} \right\rrbracket \dgal[\Bigg]{\frac{\partial  \left(\errh^{m} -E_h \errh^{m}\right)}{\partial n_e} } dS - \sum_{e \in \Eh} \int_e  \left\llbracket \frac{\partial^2 P_h \varphi^m}{\partial n_e \partial t_e} \right\rrbracket \frac{\partial  \left(\errh^{m} -E_h \errh^{m}\right)}{\partial t_e} dS
\nonumber
\\
&\quad+ \sigma \sum_{e \in \Eh} \frac{1}{|e|} \int_e  \left\llbracket \frac{\partial P_h \varphi^m}{\partial n_e} \right\rrbracket  \left\llbracket \frac{\partial \left(\errh^{m} -E_h \errh^{m}\right)}{\partial n_e} \right\rrbracket dS.
\end{align}
Combining equations \eqref{eq:aiph-split}--\eqref{eq:aiph-with-projection}, we have
\begin{align*}
&\iprd{\partial_t \varphi^{m}}{\errh^{m} - E_h \errh^{m}} + \iprd{\nabla \left(\left(\varphi^m\right)^3 - \varphi^{m}\right)}{\nabla (\errh^{m} - E_h \errh^{m})} + \varepsilon^2 \aIPh{\varphi^{m}}{ \errh^{m} - E_h \errh^{m}}
\\
&\quad =  \sum_{K \in \TTh}  \int_K \left(\varepsilon^2 \Delta^2 P_h \varphi^m - \Delta \left( \left(\varphi^m\right)^3 - \varphi^{m} \right) + \partial_t \varphi^{m}\right) \left(\errh^{m} -E_h \errh^{m}\right) \, dx\
\\
&\qquad + \varepsilon^2 \sum_{K \in \TTh}  \int_K \nabla^2 (P_h \varphi^m - \varphi^m) : \nabla^2 (E_h \errh^m) dx + \varepsilon^2 \sum_{e \in \Eh} \int_e \dgal[\Bigg]{\frac{\partial^2 \errh^{m}}{\partial n_e^2} } \left\llbracket \frac{\partial P_h \varphi^{m}}{\partial n_e} \right\rrbracket dS 
\\
&\qquad+ \varepsilon^2 \sum_{e \in \Eh} \int_e \left\llbracket \frac{\partial \Delta P_h \varphi^{m}}{\partial n_e} \right\rrbracket \left(\errh^{m} -E_h \errh^{m}\right) dS - \varepsilon^2 \sum_{e \in \Eh} \int_e  \left\llbracket \frac{\partial^2 P_h \varphi^m}{\partial n_e^2} \right\rrbracket \dgal[\Bigg]{\frac{\partial  \left(\errh^{m} -E_h \errh^{m}\right)}{\partial n_e} } dS 
\\
&\qquad - \varepsilon^2 \sum_{e \in \Eh} \int_e  \left\llbracket \frac{\partial^2 P_h \varphi^m}{\partial n_e \partial t_e} \right\rrbracket \frac{\partial  \left(\errh^{m} -E_h \errh^{m}\right)}{\partial t_e} dS + \varepsilon^2 \sigma \sum_{e \in \Eh} \frac{1}{|e|} \int_e  \left\llbracket \frac{\partial P_h \varphi^m}{\partial n_e} \right\rrbracket  \left\llbracket \frac{\partial \left(\errh^{m} -E_h \errh^{m}\right)}{\partial n_e} \right\rrbracket dS.
\end{align*}

Following the medius analysis presented in \cite{brenner:11:frontiers} (see pages 96-100), we proceed by bounding each of the terms on the right-hand side:
\begin{align*}
&\left|\sum_{K \in \TTh}  \int_K \left(\varepsilon^2\Delta^2 P_h \varphi^m - \Delta \left( \left(\varphi^m\right)^3 - \varphi^{m} \right) + \partial_t \varphi^{m}\right) \left(\errh^{m} -E_h \errh^{m}\right) \, dx \right| 
\\
&\qquad\le \left( \sum_{K \in \TTh} h^4 \norm{\varepsilon^2\Delta^2 P_h \varphi^m - \Delta \left( \left(\varphi^m\right)^3 - \varphi^{m} \right) + \partial_t \varphi^{m}}{L^2(K)}^2\right)^{\nicefrac{1}{2}} \norm{\errh^{m}}{2,h},
\\
&\left|\varepsilon^2\sum_{K \in \TTh}  \int_K \nabla^2 (P_h \varphi^m - \varphi^m) : \nabla^2 (E_h \errh^m) dx \right| \le C \varepsilon^2 \left(\sum_{K \in \TTh} |\varphi^m - P_h \varphi^m|_{H^2(K)}^2\right)^{\nicefrac{1}{2}} \norm{\errh^m}{2,h}
\\
& \left|\varepsilon^2\sum_{e \in \Eh} \int_e \dgal[\Bigg]{\frac{\partial^2  \errh^{m} }{\partial n_e^2} } \left\llbracket \frac{\partial P_h \varphi^{m}}{\partial n_e} \right\rrbracket \right| 
\\
&\qquad\le C \varepsilon^2 \left( \sum_{e \in \Eh} \frac{1}{|e|} \norm{\left\llbracket \frac{\partial (P_h \varphi^{m} - \varphi^m)}{\partial n_e} \right\rrbracket}{L^2(e)}^2 \right)^{\nicefrac{1}{2}} \norm{\errh^{m}}{2,h},
\\
&\left|\varepsilon^2\sum_{e \in \Eh} \int_e \left\llbracket \frac{\partial \Delta P_h \varphi^{m}}{\partial n_e} \right\rrbracket \left(\errh^{m} -E_h \errh^{m}\right) dS \right| 
\\
&\qquad\le C \varepsilon^2 \left( \sum_{e \in \Eh} |e|^3 \norm{\left\llbracket \frac{\partial \Delta P_h \varphi^{m}}{\partial n_e} \right\rrbracket }{L^2(e)}^2\right)^{\nicefrac{1}{2}} \norm{\errh^{m}}{2,h},
\\
&\left|\varepsilon^2\sum_{e \in \Eh} \int_e  \left\llbracket \frac{\partial^2 P_h \varphi^m}{\partial n_e^2} \right\rrbracket \dgal[\Bigg]{\frac{\partial  \left(\errh^{m} -E_h \errh^{m}\right)}{\partial n_e} } dS \right| 
\\
&\qquad \le C \varepsilon^2 \left( \sum_{e \in \Eh} |e| \norm{\left\llbracket \frac{\partial^2 P_h \varphi^{m}}{\partial n_e^2} \right\rrbracket }{L^2(e)}^2\right)^{\nicefrac{1}{2}} \norm{\errh^{m}}{2,h},
\\
&\left| \varepsilon^2\sum_{e \in \Eh} \int_e \left\llbracket \frac{\partial^2 P_h \varphi^m}{\partial n_e \partial t_e} \right\rrbracket \frac{\partial  \left(\errh^{m} -E_h \errh^{m}\right)}{\partial t_e} dS \right| 
\\
&\qquad\le C \varepsilon^2 \left( \sum_{e \in \Eh} \frac{1}{|e|} \norm{\left\llbracket \frac{\partial (P_h \varphi^{m} - \varphi^m)}{\partial n_e} \right\rrbracket }{L^2(e)}^2\right)^{\nicefrac{1}{2}} \norm{\errh^{m}}{2,h} ,
\\
&\left| \varepsilon^2 \sigma \sum_{e \in \Eh} \frac{1}{|e|} \int_e  \left\llbracket \frac{\partial P_h \varphi^m}{\partial n_e} \right\rrbracket  \left\llbracket \frac{\partial \left(\errh^{m} -E_h \errh^{m}\right)}{\partial n_e} \right\rrbracket dS \right|
\\
&\qquad\le  C \varepsilon^2 \left( \sum_{e \in \Eh} \frac{1}{|e|} \norm{\left\llbracket \frac{\partial (P_h \varphi^{m} - \varphi^m)}{\partial n_e} \right\rrbracket }{L^2(e)}^2\right)^{\nicefrac{1}{2}} \norm{\errh^{m}}{2,h} .
\end{align*}
Thus, we have
\begin{align*}
&\iprd{\partial_t \varphi^{m}}{\errh^{m} - E_h \errh^{m}} + \iprd{\nabla \left(\left(\varphi^m\right)^3 - \varphi^{m}\right)}{\nabla (\errh^{m} - E_h \errh^{m})} + \varepsilon^2 \aIPh{\varphi^{m}}{ \errh^{m} - E_h \errh^{m}}
\\
&\quad\le C \varepsilon^2 \Bigg(\sum_{K \in \TTh} |\varphi^m - P_h \varphi^m|_{H^2(K)}^2+ \frac{1}{\varepsilon^4} \sum_{K \in \TTh} h^4 \norm{\varepsilon^2 \Delta^2 P_h \varphi^m - \Delta \left( \left(\varphi^m\right)^3 - \varphi^{m} \right) + \partial_t \varphi^{m}}{L^2(K)}^2 
\\
&\qquad + \sum_{e \in \Eh} \frac{1}{|e|} \norm{\left\llbracket \frac{\partial (P_h \varphi^{m} - \varphi^m)}{\partial n_e} \right\rrbracket}{L^2(e)}^2 + \sum_{e \in \Eh} |e|^3 \norm{\left\llbracket \frac{\partial \Delta P_h \varphi^{m}}{\partial n_e} \right\rrbracket }{L^2(e)}^2 
\\
&\qquad + \sum_{e \in \Eh} |e| \norm{\left\llbracket \frac{\partial^2 P_h \varphi^{m}}{\partial n_e^2} \right\rrbracket }{L^2(e)}^2 +  \sum_{e \in \Eh} \frac{1}{|e|} \norm{\left\llbracket \frac{\partial (P_h \varphi^{m} - \varphi^m)}{\partial n_e} \right\rrbracket }{L^2(e)}^2 
\\
&\qquad+ \sum_{e \in \Eh} \frac{1}{|e|} \norm{\left\llbracket \frac{\partial (P_h \varphi^{m} - \varphi^m)}{\partial n_e} \right\rrbracket }{L^2(e)}^2\Bigg)^{\nicefrac{1}{2}} \norm{\errh^{m}}{2,h} 
\\
&\quad \le C \varepsilon^2 \Bigg( \frac{1}{\varepsilon^4} \left[\text{Osc}_j(\partial_t \varphi^m)\right]^2 + \sum_{K \in \TTh} |\varphi^m - P_h \varphi^m |_{H^2(K)}^2 + \sum_{e \in \Eh} \frac{1}{|e|} \norm{\left\llbracket \frac{\partial (P_h \varphi^{m} - \varphi^m)}{\partial n_e} \right\rrbracket }{L^2(e)}^2\Bigg)^{\nicefrac{1}{2}} \norm{\errh^{m}}{2,h}
\\
&\quad \le C \varepsilon^2 \left(\frac{1}{\varepsilon^2} \left[\text{Osc}_j(\partial_t \varphi^m)\right] + \norm{ \varphi^{m} - P_h \varphi^m}{2,h} \right) \norm{\errh^{m}}{2,h}
\end{align*}
where we have followed the medius analysis presented in \cite{brenner:11:frontiers} (see pages 101-106) and where 
$\text{Osc}_j(\nu)$ is referred to as the oscillation of $\nu$ (of order $j$) defined by 
\begin{align} \label{eq:data-osc}
\text{Osc}_j(\nu):= \left( \sum\limits_{K \in \TTh} h^4 \norm{\nu - \tilde{\nu}}{L^2(K)}^2 \right)^{\frac{1}{2}}
 \end{align}
and where $ \tilde{\nu}$ is the $L^2$ orthogonal projection of $\nu$ on the space of piecewise polynomial functions of degree less than or equal to $j$, i.e.,
\begin{align*}
\int_\Omega (\nu -  \tilde{\nu}) \psi \, dx = 0 \quad \forall \,  \psi \in P_j(\Omega, \TTh).
\end{align*}
Thus,
\begin{align}
& \iprd{\partial_t \varphi^{m}}{\errh^{m} - E_h \errh^{m}} + \iprd{\nabla \left(\left(\varphi^m\right)^3 - \varphi^{m}\right)}{\nabla (\errh^{m} - E_h \errh^{m})} + \varepsilon^2 \aIPh{ \varphi^{m}}{ \errh^{m} - E_h \errh^{m}}
\nonumber
\\
&\quad\le \frac{C}{C_{coer}} \left( \frac{1}{\varepsilon^2}\left[\text{Osc}_j(\partial_t \varphi^m)\right]^2 + \norm{ \varphi^{m} - P_h \varphi^m}{2,h}^2 \right) + \frac{C_{coer} \varepsilon^2}{8} \norm{\errh^{m}}{2,h}^2.
\end{align}
\end{proof}

\begin{lem}
Let $u_t \in L^{\infty}(0,\infty;H^\ell(\Omega))$ and $u \in L^{\infty}(0,\infty;H^s(\Omega))$, then the Ritz projection operator \eqref{eq:H2-Ritz-projection} satisfies the following bound:
\begin{align}\label{eq:ritz-proj-bound}
\norm{u - P_h u}{2,h} \le C^{**}_{data} h^{\min(s,k+1)-2}
\end{align}
where $k \ge 2$ is the order of the Lagrange finite element space $Z_h$.
\end{lem}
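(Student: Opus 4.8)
The statement is an energy-norm error estimate for the $C^0$ interior penalty (Ritz) projection $P_h$, and the plan is to prove it by a C\'ea/Strang-type argument adapted to the nonconforming form $\aIPh{\cdot}{\cdot}$, relying on the coercivity in Lemma~\ref{lem:aIPh-boundedness} together with the extended interpolation and continuity estimates for $C^0$-IP methods catalogued in \cite{brenner:11:frontiers}. First I would introduce an auxiliary quasi-interpolant $\Pi_h u \in Z_h$ --- for instance the Scott--Zhang interpolant into $Z_h$ --- for which the classical approximation bound $\norm{u - \Pi_h u}{2,h} \le C\, h^{\min(s,k+1)-2}\, \norm{u}{H^s(\Omega)}$ holds, along with the analogous bound for the ``enhanced'' edge quantity $\big( \sum_{e\in\Eh} |e|\, \norm{\dgal{\partial^2(u-\Pi_h u)/\partial n_e^2}}{L^2(e)}^2 \big)^{1/2}$; both follow from the Bramble--Hilbert lemma with standard trace and scaling inequalities on shape-regular meshes, and the enhanced quantity is precisely what is needed to bound $\aIPh{\cdot}{\cdot}$ when one of its arguments does not lie in $Z_h$.

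By the triangle inequality it then suffices to control $\norm{\Pi_h u - P_h u}{2,h}$. Since $\Pi_h u - P_h u \in Z_h$, the coercivity \eqref{eq:coer} together with the Galerkin orthogonality built into \eqref{eq:H2-Ritz-projection} gives
\[
C_{coer}\, \norm{\Pi_h u - P_h u}{2,h}^2 \le \aIPh{\Pi_h u - P_h u}{\Pi_h u - P_h u} = \aIPh{\Pi_h u - u}{\Pi_h u - P_h u}.
\]
Next I would invoke the extended continuity estimate for the $C^0$-IP form (established in \cite{brenner:11:frontiers} via the alternative representation \eqref{eq:aIPh-def-alt} and the enriching operator $E_h$), which bounds $\aIPh{v}{w}$ for $w\in Z_h$ and $v$ in the relevant broken Sobolev space by $\norm{w}{2,h}$ times the enhanced norm of $v$. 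Applying it with $v = \Pi_h u - u$ and $w = \Pi_h u - P_h u$ and cancelling one factor of $\norm{\Pi_h u - P_h u}{2,h}$ yields $\norm{\Pi_h u - P_h u}{2,h} \le C\, h^{\min(s,k+1)-2}\, \norm{u}{H^s(\Omega)}$; combining with the triangle inequality and taking the supremum over $t\in(0,\infty)$ gives the claim with $C^{**}_{data}$ depending on $\norm{u}{L^\infty(0,\infty;H^s(\Omega))}$. Only the spatial regularity $u\in H^s$ enters this particular bound (the hypothesis on $u_t$ is carried along for later estimates), and the zero-mean side condition in \eqref{eq:H2-Ritz-projection} plays no role here since constants lie in the kernel of both $\aIPh{\cdot}{\cdot}$ and $\norm{\cdot}{2,h}$.

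The main obstacle is the extended continuity step: because $\Pi_h u - u \notin Z_h$, Lemma~\ref{lem:aIPh-boundedness} does not apply directly, and the edge contributions involving $\dgal{\partial^2(\cdot)/\partial n_e^2}$ and $\llbracket \partial(\cdot)/\partial n_e\rrbracket$ must be handled by a medius-type argument --- the same machinery already used in the proof of Lemma~\ref{lem:bounds-oscillations}. This step also implicitly requires $s$ to be large enough (in particular $s>2$, with sufficient regularity for traces of second derivatives to be defined) so that $\aIPh{u}{\cdot}$ and the enhanced norm of $u$ are finite; everything else is routine interpolation theory. I would therefore state these trace, enriching-operator, and interpolation facts as quoted results from \cite{brenner:11:frontiers} rather than reprove them.
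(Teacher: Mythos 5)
Your argument is essentially correct, but it takes a genuinely different route from the paper. The paper does not run a C\'ea/Strang argument at all: it simply observes that $P_h$ defined by \eqref{eq:H2-Ritz-projection} is the $C^0$-IP Galerkin approximation of the biharmonic model problem $\Delta^2 u = f$ with $\partial u/\partial n = \partial \Delta u/\partial n = 0$, cites the error estimate from the medius analysis in \cite{brenner:11:frontiers} (which bounds $\norm{u-P_hu}{2,h}$ by the best-approximation error plus the data oscillation of $f$, valid for $f\in H^\ell$ and $u\in H^s$ with $s\in(2,\ell+4]$), and then replaces $f$ by $u_t$ for the parabolic setting. Your quasi-interpolant/Galerkin-orthogonality/extended-continuity argument is the classical alternative and is sound, including your observations that the mean-value constraint is irrelevant for the seminorm estimate and that the edge terms must be handled with the enriching operator $E_h$ and an enhanced norm. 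The trade-off is exactly the one you flag at the end: your route needs enough spatial regularity for the traces $\dgal[\big]{\partial^2(u-\Pi_h u)/\partial n_e^2}$ (piecewise $H^3$-type, i.e.\ $s$ comfortably above $5/2$ --- satisfied in the paper's application, where $s=3$, $\ell=2$, $k=2$), and it makes no use of the hypothesis $u_t\in L^\infty(0,\infty;H^\ell(\Omega))$, so it does not explain why that hypothesis (or the implicit restriction $s\le \ell+4$) appears in the statement; the paper's citation-based medius argument is precisely what brings $u_t$ into play, through the oscillation term $\text{Osc}_j(u_t)$ as in Lemma~\ref{lem:bounds-oscillations}, and covers lower-regularity $u$ down to $s>2$. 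In short: your proof establishes the stated rate under slightly stronger (but here available) regularity on $u$ alone, while the paper's proof is a one-step appeal to \cite{brenner:11:frontiers} whose hypotheses match the lemma as written.
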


\begin{proof}
According to \cite{brenner:11:frontiers} and considering the model problem 
\begin{align*}
\Delta^2 u &= f \quad \text{in} \quad \Omega
\\
\frac{\partial u}{\partial n} = \frac{\partial \Delta u}{\partial n} &=0 \quad \text{on} \quad \partial \Omega
\end{align*}
it can be shown that 
\begin{align*}
\norm{u - P_h u}{2,h} \le C h^{\min(s,k+1)-2}
\end{align*}
as long as $f \in H^\ell(\Omega)$ and $u \in H^s(\Omega)$ for $s \in (2,\ell + 4]$, where $k \ge 2$ is the order of the Lagrange finite element space $Z_h$ and where $C$ depends on the data. 

In the case of a parabolic model problem 
\begin{align*}
\Delta^2 u &= u_t \quad \text{in} \quad \Omega,
\\
u(0) &=u_0 \quad \text{in} \quad \Omega,
\\
\frac{\partial u}{\partial n} = \frac{\partial \Delta u}{\partial n} &=0 \quad \text{on} \quad \partial \Omega,
\end{align*}
we simply replace $f$ with $u_t$ in the analysis of \cite{brenner:11:frontiers} in order to achieve \eqref{eq:ritz-proj-bound}.
\end{proof}

We are now in position to prove the main theorem in this section. We shall assume that the weak solutions have the additional regularities.  
\begin{align}
\varphi &\in  L^{\infty}(0,\infty;H^3(\Omega)),
\nonumber
\\
\partial_t \varphi &\in L^{\infty}\left(0,\infty;H^3(\Omega)\right) ,
\nonumber
\\
\partial_{tt} \varphi &\in L^\infty\left(0,\infty;L^2(\Omega)\right).
	\label{eq:higher-regularities}
	\end{align}
With these regularities, we set $\ell = 2$, $s = 3$, and $k=2$ in order to obtain
\begin{align}\label{eq:ritz-proj-bound-1}
\norm{\varphi - P_h \varphi}{2,h} \le C^{**}_{data} h.
\end{align}

\begin{thm}\label{thm:main-error-result}
Let $\varphi^m$ represent the solution to \eqref{eq:fully-discrete-fem} and suppose that $\varphi $ satisfies the regularities \eqref{eq:higher-regularities} and that $H$ and $\omega$ are chosen so that
\begin{align}
\lambda_1 := \dfrac{ C_{coer} \varepsilon^2\omega - 4 C_I^2 C_P^2 H^2 \omega^2 - 72 \left((C_{inf})^2 + (C_{data}^{\prime})^2\right)^2 - 16}{C_{coer} \varepsilon^2  + 16 \Delta t } > 0, \label{wH1}
\end{align} 
i.e. $H$ sufficiently small and $\omega$ sufficiently large.
Then we have
\begin{align*}
\norm{\varphi^m - \phih^m}{L^2}^2 &\le \norm{\varphi^0 - \phih^0}{L^2}^2 \left(\frac{1}{1+\lambda_1 \Delta t}\right)^m 
\nonumber
\\
&+ \left(\dfrac{ C_{coer} \varepsilon^2 \left( h^2 + h^5 + (\Delta t)^2\right)}{\omega C_{coer} \varepsilon^2 - 4 C_I^2 C_P^2 H^2 \omega^2 - 72 \left((C_{inf})^2 + (C_{data}^{\prime})^2\right)^2 - 16}\right) C_{data}^{*}
\end{align*}
for any $m, h, \Delta t > 0$.
\end{thm}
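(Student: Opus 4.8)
The plan is to rerun the energy argument of Lemma~\ref{lem:stability}, but now on the error equation \eqref{eq:main-error} with $\psi=\errh^m$. First I handle the left-hand side: the polarization identity rewrites $\iprd{\ddt\errh^m}{\errh^m}$ as $\frac{1}{2\Delta t}\big(\norm{\errh^m}{L^2}^2-\norm{\errh^{m-1}}{L^2}^2+\norm{\errh^m-\errh^{m-1}}{L^2}^2\big)$, coercivity \eqref{eq:coer} gives $\varepsilon^2\aIPh{\errh^m}{\errh^m}\ge C_{coer}\varepsilon^2\norm{\errh^m}{2,h}^2$, and the nudging term contributes exactly $\omega\norm{\errh^m}{L^2}^2$; the term $\norm{\errh^m-\errh^{m-1}}{L^2}^2$ is nonnegative and is simply dropped. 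What remains is to bound the ten terms on the right of \eqref{eq:main-error}, absorbing controlled fractions of $C_{coer}\varepsilon^2\norm{\errh^m}{2,h}^2$ and of $\omega\norm{\errh^m}{L^2}^2$ into the left, and collecting the rest into a ``data'' multiple of $\norm{\errh^m}{L^2}^2$, a multiple of $\norm{\errh^{m-1}}{L^2}^2$, and discretization errors.

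Next I bound the right-hand side in three groups. \emph{(i) The feedback and interpolation terms} $-\omega\iprd{I_H\errP^m}{\errh^m}$ and $\omega\iprd{\errh^m-I_H\errh^m}{\errh^m}$: for the first I use \eqref{eq:projection-properties-equiv}, the Poincar\'e inequalities, \eqref{eq:ritz-proj-bound-1} and Young's inequality to get $\tfrac{\omega}{2}\norm{\errh^m}{L^2}^2+C\omega h^2$; for the second I use \eqref{eq:projection-properties-diff}, the Poincar\'e inequality $\norm{\nabla\errh^m}{L^2}\le C_P\norm{\errh^m}{2,h}$, and Young's inequality against a fraction of the coercivity budget, which produces the $\tfrac{\omega^2 C_I^2C_P^2H^2}{C_{coer}\varepsilon^2}\norm{\errh^m}{L^2}^2$ contribution that becomes the $-C_I^2C_P^2H^2\omega^2$ term of $\lambda_1$. \emph{(ii) The consistency and truncation terms} $-\iprd{\ddt\errP^m}{\errh^m}$, $\iprd{\ddt\varphi^m-\partial_t\varphi^m}{\errh^m}$, $\iprd{\nabla(\varphi^m-\varphi^{m-1})}{\nabla\errh^m}$, and the Ritz gradient pieces in $\iprd{\nabla(\errP^{m-1}+\errh^{m-1})}{\nabla\errh^m}$: writing $\ddt\errP^m=\tfrac1{\Delta t}\int_{t_{m-1}}^{t_m}\partial_t(\varphi-P_h\varphi)\,dt$ and using the Ritz bound \eqref{eq:ritz-proj-bound} for $\partial_t\varphi\in L^\infty(0,\infty;H^3(\Omega))$, the Taylor bounds $\norm{\ddt\varphi^m-\partial_t\varphi^m}{L^2}\le C\Delta t$ and $\norm{\varphi^m-\varphi^{m-1}}{L^2}\le C\Delta t$ (using the regularities \eqref{eq:higher-regularities}), Lemma~\ref{lem:grad-split} (always assigning the $\norm{\cdot}{2,h}$ factor to $\errh^m$), \eqref{eq:ritz-proj-bound-1}, the Poincar\'e inequalities and Young's inequality, all of these are controlled by a fraction of $C_{coer}\varepsilon^2\norm{\errh^m}{2,h}^2$ plus $C(h^2+(\Delta t)^2)$, except that the $\errh^{m-1}$ piece of $\iprd{\nabla\errh^{m-1}}{\nabla\errh^m}$ yields $\tfrac{C}{C_{coer}\varepsilon^2}\norm{\errh^{m-1}}{L^2}^2$, which is the source of the $16\Delta t$ in the denominator and the $-16$ in the numerator of $\lambda_1$. \emph{(iii) The nonlinear term} $-\iprd{\nabla((\varphi^m)^3-(\phi_h^m)^3)}{\nabla\errh^m}$: I factor $(\varphi^m)^3-(\phi_h^m)^3=\big((\varphi^m)^2+\varphi^m\phi_h^m+(\phi_h^m)^2\big)\err^m$ with $\err^m=\errP^m+\errh^m$, bound $\norm{(\varphi^m)^2+\varphi^m\phi_h^m+(\phi_h^m)^2}{L^\infty}\le 2\big((C_{inf})^2+(C_{data}^{\prime})^2\big)$ (where $C_{data}^{\prime}$ is the uniform $L^\infty$ bound for the true solution), apply Lemma~\ref{lem:grad-split}, split $\err^m$, and use Young's inequality: the $\errh^m$ part produces the $\tfrac{\big((C_{inf})^2+(C_{data}^{\prime})^2\big)^2}{C_{coer}\varepsilon^2}\norm{\errh^m}{L^2}^2$ contribution that becomes $-72\big((C_{inf})^2+(C_{data}^{\prime})^2\big)^2$ in $\lambda_1$, while the $\errP^m$ part is $O(h^2)$ by \eqref{eq:ritz-proj-bound-1} and Poincar\'e. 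Finally, the three $E_h$-correction terms $\iprd{\partial_t\varphi^m}{\errh^m-E_h\errh^m}$, $\iprd{\nabla((\varphi^m)^3-\varphi^m)}{\nabla(\errh^m-E_h\errh^m)}$ and $\varepsilon^2\aIPh{\varphi^m}{\errh^m-E_h\errh^m}$ are exactly the left-hand side of Lemma~\ref{lem:bounds-oscillations}, so they are bounded by $\tfrac{C}{C_{coer}\varepsilon^2}\big([\text{Osc}_j(\partial_t\varphi^m)]^2+\norm{\varphi^m-P_h\varphi^m}{2,h}^2\big)+\tfrac{C_{coer}\varepsilon^2}{8}\norm{\errh^m}{2,h}^2$; with \eqref{eq:ritz-proj-bound-1} and $\partial_t\varphi\in L^\infty(0,\infty;H^3(\Omega))$ this is $O(h^2+h^5)$ plus an absorbed piece.

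To finish, I choose the coercivity fractions so the absorbed $\norm{\errh^m}{2,h}^2$-pieces sum to at most $C_{coer}\varepsilon^2\norm{\errh^m}{2,h}^2$ (so that term on the left is dropped as nonnegative) and the absorbed $\omega\norm{\errh^m}{L^2}^2$-pieces are likewise covered; multiplying by $2\Delta t$ and combining the $\norm{\errh^{m-1}}{L^2}^2$ contributions yields
\begin{align*}
\Big[1+\omega\Delta t-\tfrac{4C_I^2C_P^2H^2\omega^2\Delta t}{C_{coer}\varepsilon^2}-\tfrac{72\big((C_{inf})^2+(C_{data}^{\prime})^2\big)^2\Delta t}{C_{coer}\varepsilon^2}\Big]\norm{\errh^m}{L^2}^2
&\le\tfrac{C_{coer}\varepsilon^2+16\Delta t}{C_{coer}\varepsilon^2}\norm{\errh^{m-1}}{L^2}^2 \\
&\quad+C_{data}^{*}\,\Delta t\,(h^2+h^5+(\Delta t)^2).
\end{align*}
Multiplying through by $\tfrac{C_{coer}\varepsilon^2}{C_{coer}\varepsilon^2+16\Delta t}$ normalizes the $\norm{\errh^{m-1}}{L^2}^2$ coefficient to one and exhibits $1+\lambda_1\Delta t$ as the coefficient of $\norm{\errh^m}{L^2}^2$, with $\lambda_1$ exactly as in \eqref{wH1}; the hypothesis $\lambda_1>0$ is precisely what is used at this step. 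Lemma~\ref{lem:geometric-series-bound} with $r=1+\lambda_1\Delta t$ then gives geometric decay of $\norm{\errh^m}{L^2}^2$, with $\Delta t$ canceling in $B/(r-1)$. Converting back via $\norm{\varphi^m-\phi_h^m}{L^2}^2\le 2\norm{\errP^m}{L^2}^2+2\norm{\errh^m}{L^2}^2$ and $\norm{\errh^0}{L^2}^2\le 2\norm{\errP^0}{L^2}^2+2\norm{\varphi^0-\phi_h^0}{L^2}^2$, and absorbing the $O(h^2)$ projection remainders into the discretization-error term, delivers the stated estimate.

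The hard part is the nonlinear term in group \emph{(iii)}: it is the only place where the pointwise size of $\phi_h^m$ enters, and since the uniform-in-time $L^\infty$ bound $C_{inf}$ is only known to be $O(h^{-1})$ (via the inverse and Agmon inequalities discussed just before Lemma~\ref{lem:uniqueness}), the smallness requirement \eqref{wH1} on $(\omega,H)$ is, strictly speaking, mesh dependent --- just as in Lemma~\ref{lem:uniqueness}. The secondary difficulty is bookkeeping: one must partition the single coercivity reservoir $C_{coer}\varepsilon^2\norm{\errh^m}{2,h}^2$ among the several right-hand-side terms that call on it (the $E_h$-correction via Lemma~\ref{lem:bounds-oscillations}, the nonlinear $\errh^m$ part, the two Ritz gradient pieces, the time-difference gradient, and the $\ddt\errP^m$ and feedback terms) so the leftover stays nonnegative, and keep straight which contributions attach to $\norm{\errh^m}{L^2}^2$ versus $\norm{\errh^{m-1}}{L^2}^2$ so Lemma~\ref{lem:geometric-series-bound} applies with the clean ratio $1+\lambda_1\Delta t$.
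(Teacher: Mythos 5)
Your proposal follows the paper's proof essentially verbatim: you test the error equation \eqref{eq:main-error} with $\errh^m$, bound the feedback, consistency, nonlinear, and $E_h$-correction terms exactly as the paper does (via Lemma \ref{lem:grad-split}, the Ritz bound \eqref{eq:ritz-proj-bound-1}, Taylor expansions, and Lemma \ref{lem:bounds-oscillations}), multiply by $2\Delta t$, normalize by $C_{coer}\varepsilon^2/(C_{coer}\varepsilon^2+16\Delta t)$ to exhibit $1+\lambda_1\Delta t$, and conclude with Lemma \ref{lem:geometric-series-bound}. The only differences are cosmetic bookkeeping (e.g., which absorption reservoir takes the $\nabla(\varphi^m-\varphi^{m-1})$ term) and your explicit triangle-inequality conversion from $\errh^m$ back to $\varphi^m-\phih^m$, which the paper leaves implicit.
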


\begin{rem}
The sufficient condition \eqref{wH1} is a similar sufficient condition to what is found in the long term error bound CDA applied to Navier-Stokes equations \cite{RZ19,LRZ19}, where $H$ must be small enough so that the nudging parameter can be taken large enough to allow a long term error bound to hold.  In our numerical tests, just as in the numerical tests for CDA applied to Navier-Stokes in \cite{RZ19,LRZ19}, the sufficient condition appears far from a necessary condition.
\end{rem}

\begin{proof}
We proceed by bounding the first seven terms on the right hand side of equation \eqref{eq:main-error}. The first bound follows from an application of Young's Inequality, Taylor's theorem, and standard finite element theory.  We have
\begin{align}
-\iprd{\ddt \errP^{m}}{\errh^{m}}  &\le \norm{\ddt \errP^{m}}{L^2} \norm{\errh^{m}}{L^2}
\nonumber
\\
&\le \frac{2}{\omega} \norm{\ddt \errP^{m}}{L^2}^2 + \frac{\omega}{8} \norm{\errh^{m}}{L^2}^2
\nonumber
\\
&\le \frac{2}{\omega} \norm{\varphi_t(t^*) - P_h \varphi_t(t^*)}{L^2}^2 + \frac{\omega}{8} \norm{\errh^{m}}{L^2}^2,
\nonumber
\\
&\le \frac{2C_P^4 (C^{\prime\prime}_{data})^2}{\omega} h^2 + \frac{\omega}{8} \norm{\errh^{m}}{L^2}^2,
\label{eq:error-first}
\end{align}
where $t^* \in [t_{m-1}, t_m]$ and where $C^{\prime\prime}_{data}$ corresponds to the assumption that $\varphi_t \in L^{\infty}\left(0,\infty;H^3(\Omega)\right)$ from \eqref{eq:higher-regularities}. The next two estimates rely on properties of the projection operator $I_H$, \eqref{eq:projection-properties-diff} and \eqref{eq:projection-properties-equiv}. Thus,  with the assumption that $\varphi \in L^\infty(0,\infty;H^3(\Omega))$, we have
\begin{align}
- \omega \iprd{I_H \errP^{m}}{\errh^{m}}  &\le \omega \norm{I_H \errP^{m}}{L^2} \norm{\errh^{m}}{L^2}
\nonumber
\\
&\le 2 \omega \norm{I_H \errP^{m}}{L^2}^2 + \frac{\omega}{8} \norm{\errh^{m}}{L^2}^2
\nonumber
\\
&\le 2 C_I^2 \omega \norm{\errP^{m}}{L^2}^2 + \frac{\omega}{8} \norm{\errh^{m}}{L^2}^2
\nonumber
\\
&\le 2 C_I^2 C_P^4 \omega \norm{\errP^{m}}{2,h}^2 + \frac{\omega}{8} \norm{\errh^{m}}{L^2}^2
\nonumber
\\
&\le 2 C_I^2 C_P^4 (C^{\prime}_{data})^2 \omega h^2 + \frac{\omega}{8} \norm{\errh^{m}}{L^2}^2
\\
\text{and}
\nonumber
\\
\omega \iprd{\errh^{m} - I_H \errh^{m}}{\errh^{m}}  &\le \omega \norm{\errh^{m} - I_H \errh^{m}}{L^2} \norm{\errh^{m}}{L^2}
\nonumber
\\
 &\le C_I H \omega \norm{\nabla \errh^{m} }{L^2} \norm{\errh^{m}}{L^2}
 \nonumber
\\
 &\le \frac{C_{coer} \varepsilon^2}{8} \norm{\errh^{m} }{2,h}^2 + \frac{2 C_I^2 C_P^2 H^2 \omega^2}{C_{coer} \varepsilon^2} \norm{\errh^{m}}{L^2}^2.
\end{align}
For the next term, an application of Taylor's theorem lead to
\begin{align}
\iprd{\ddt \varphi^{m} - \partial_t \varphi^{m}}{\errh^{m}}  &\le \norm{\ddt \varphi^{m} - \partial_t \varphi^{m}}{L^2}  \norm{\errh^{m}}{L^2}
\nonumber
\\
&\le \frac{2}{\omega}\norm{\ddt \varphi^{m} - \partial_t \varphi^{m}}{L^2}^2  + \frac{\omega}{8} \norm{\errh^{m}}{L^2}^2
\nonumber
\\
&\le \frac{2(\Delta t)^2}{\omega}\norm{\varphi_{ss}(t^*)}{L^2}^2  + \frac{\omega}{8} \norm{\errh^{m}}{L^2}^2,
\nonumber
\\
&\le \frac{C^{\prime\prime\prime}_{data} (\Delta t)^2}{\omega}  + \frac{\omega}{8} \norm{\errh^{m}}{L^2}^2,
\end{align}
where $t^* \in [t_{m-1}, t_m]$ and $C^{\prime\prime\prime}_{data}$ corresponds to the assumption that $\varphi_{tt} \in L^{\infty}(0,\infty;L^2(\Omega))$ from \eqref{eq:higher-regularities}. For the nonlinear term,  we use H\"older and Young's inequalities to obtain,
\begin{align}
- \iprd{ \nabla \left( (\varphi^m)^3 - (\phih^m)^3 \right)}{\nabla \errh^m} &\le \sqrt{2} \norm{(\varphi^m)^3 - (\phih^m)^3}{L^2} \norm{\errh^m}{2,h}
\nonumber
\\
&\le \sqrt{2} \norm{(\varphi^m)^2 +\varphi^m \phih^m + (\phih^m)^2}{L^{\infty}} \norm{\errP^m + \errh^m}{L^2} \norm{\errh^m}{2,h}
\nonumber
\\
&\le \frac{3\sqrt{2} \left((C_{inf})^2 + (C_{data}^{\prime})^2\right)}{2} \left( \norm{\errP^m}{L^2} + \norm{\errh^m}{L^2} \right) \norm{\errh^m}{2,h}
\nonumber
\\
&\le \frac{18 \left((C_{inf})^2 + (C_{data}^{\prime})^2\right)^2}{C_{coer}\varepsilon^2} \left( \norm{\errP^m}{L^2} + \norm{\errh^m}{L^2} \right)^2 + \frac{C_{coer} \varepsilon^2}{8} \norm{\errh^m}{2,h}^2
\nonumber
\\
&\le \frac{36 \left((C_{inf})^2 + (C_{data}^{\prime})^2\right)^2}{C_{coer}\varepsilon^2} \norm{\errP^m}{L^2}^2 
\nonumber
\\
&\quad+ \frac{36 \left((C_{inf})^2 + (C_{data}^{\prime})^2\right)^2}{C_{coer}\varepsilon^2} \norm{\errh^m}{L^2}^2 + \frac{C_{coer} \varepsilon^2}{8} \norm{\errh^m}{2,h}^2,
\nonumber
\\
&\le \frac{36 C_P^4 \left((C_{inf})^2 + (C_{data}^{\prime})^2\right)^2}{C_{coer}\varepsilon^2} \norm{\errP^m}{2,h}^2 
\nonumber
\\
&\quad+ \frac{36 \left((C_{inf})^2 + (C_{data}^{\prime})^2\right)^2}{C_{coer}\varepsilon^2} \norm{\errh^m}{L^2}^2 + \frac{C_{coer} \varepsilon^2}{8} \norm{\errh^m}{2,h}^2,
\nonumber
\\
&\le \frac{36 C_P^4 (C^{**}_{data})^2 \left((C_{inf})^2 + (C_{data}^{\prime})^2\right)^2}{C_{coer}\varepsilon^2} h^2 
\nonumber
\\
&\quad+ \frac{36 \left((C_{inf})^2 + (C_{data}^{\prime})^2\right)^2}{C_{coer}\varepsilon^2} \norm{\errh^m}{L^2}^2 + \frac{C_{coer} \varepsilon^2}{8} \norm{\errh^m}{2,h}^2,
\end{align}
where $C_{data}^{\prime}$ corresponds to the assumption that $\varphi \in L^{\infty}(0,\infty;H^3(\Omega))$.  Again, relying on Lemma \ref{lem:grad-split}, we have
\begin{align}
\iprd{\nabla (\errP^{m-1} + \errh^{m-1})}{\nabla \errh^{m}} & \le \sqrt{2} \norm{\errP^{m-1} + \errh^{m-1}}{L^2} \norm{\errh^{m}}{2,h}
\nonumber
\\
&\le \frac{8}{C_{coer} \varepsilon^2}\norm{\errP^{m-1}}{L^2}^2 + \frac{8}{C_{coer} \varepsilon^2}\norm{\errh^{m-1}}{L^2}^2 + \frac{C_{coer} \varepsilon^2}{8} \norm{\errh^{m}}{2,h}^2
\nonumber
\\
&\le \frac{8C_P^4 (C^{\prime\prime\prime}_{data})^2}{C_{coer} \varepsilon^2}h^2 + \frac{8}{C_{coer} \varepsilon^2}\norm{\errh^{m-1}}{L^2}^2 + \frac{C_{coer} \varepsilon^2}{8} \norm{\errh^{m}}{2,h}^2.
\end{align}
Taylor's Theorem leads to the bounds on the next term:
\begin{align}
-\iprd{\nabla (\varphi^m - \varphi^{m-1})}{\nabla \errh^{m}} &= \iprd{\Delta (\varphi^m - \varphi^{m-1})}{\errh^{m}}
\nonumber
\\
&\le \norm{\Delta (\varphi^m - \varphi^{m-1})}{L^2} \norm{\errh^{m}}{L^2}
\nonumber
\\
&\le \frac{2}{\omega} \norm{\Delta (\varphi^m - \varphi^{m-1})}{L^2}^2 + \frac{\omega}{8} \norm{\errh^{m}}{L^2}^2
\nonumber
\\
&\le \frac{ 2(\Delta t)^2}{\omega} \norm{\Delta \varphi_s(t^*)}{L^2}^2 + \frac{\omega}{8} \norm{\errh^{m}}{L^2}^2,
\nonumber
\\
&\le \frac{ C_{data}^{\prime\prime}(\Delta t)^2}{\omega}  + \frac{\omega}{8} \norm{\errh^{m}}{L^2}^2,
\end{align}
where $t^* \in [t_{m-1}, t_m]$ and we have used the assumption that $\varphi_{t} \in L^{\infty}(0,\infty;H^3(\Omega))$ from assumption \eqref{eq:higher-regularities}. Finally, Lemma \ref{lem:bounds-oscillations} allows us to bound the remaining terms by
\begin{align}
& \iprd{\partial_t \varphi^{m}}{\errh^{m} - E_h \errh^{m}} + \iprd{\nabla \left(\left(\varphi^m\right)^3 - \varphi^{m-1}\right)}{\nabla (\errh^{m} - E_h \errh^{m})} + \varepsilon^2 \aIPh{\phi^{m}}{ \errh^{m} - E_h \errh^{m}}
\nonumber
\\
&\hspace{2in}\le \frac{C}{C_{coer}} \left( \frac{1}{\varepsilon^2} \left[\text{Osc}_j(\partial_t \varphi^m)\right]^2 + \norm{ \varphi^{m} - P_h \varphi^m}{2,h}^2 \right) + \frac{C_{coer} \varepsilon^2}{8} \norm{\errh^{m}}{2,h}^2
\nonumber
\\
&\hspace{2in}\le \frac{C}{C_{coer}} \left( \frac{1}{\varepsilon^2} \left[\text{Osc}_j(\partial_t \varphi^m)\right]^2 + (C^{**}_{data})^2h^2 \right) + \frac{C_{coer} \varepsilon^2}{8} \norm{\errh^{m}}{2,h}^2.
\label{eq:error-last}
\end{align}

Combining inequalities \eqref{eq:error-first}--\eqref{eq:error-last} with Lemma \ref{lem:aIPh-boundedness}, leads to
\begin{align}
&\frac{1}{2\Delta t} \left(\norm{\errh^{m}}{L^2}^2 - \norm{\errh^{m-1}}{L^2}^2 + \norm{\errh^{m} - \errh^{m-1}}{L^2}^2\right) + \frac{C_{coer} \varepsilon^2}{2} \norm{\errh^{m}}{2,h}^2 + \frac{\omega}{2} \norm{\errh^{m}}{L^2}^2 
\nonumber
\\
&\quad\le \frac{8}{C_{coer} \varepsilon^2 }\norm{\errh^{m-1}}{L^2}^2 + \frac{2 C_I^2 C_P^2 H^2 \omega^2}{C_{coer} \varepsilon^2} \norm{\errh^{m}}{L^2}^2 + \frac{36 \left((C_{inf})^2 + (C_{data}^{\prime})^2\right)^2}{C_{coer}\varepsilon^2}\norm{ \errh^{m}}{L^2}^2
\nonumber
\\
&\qquad+ \frac{2C_P^4 (C^{\prime\prime}_{data})^2}{\omega} h^2 +  2 C_I^2 C_P^4 (C^{\prime}_{data})^2 \omega h^2  + \frac{2 C^{\prime\prime\prime}_{data} (\Delta t)^2}{\omega}  + \frac{36 C_P^4 (C^{**}_{data})^2 \left((C_{inf})^2 + ( C_{data}^{\prime})^2\right)^2}{C_{coer}\varepsilon^2} h^2 
\nonumber
\\
&\qquad+ \frac{8C_P^4 (C^{\prime\prime\prime}_{data})^2}{C_{coer} \varepsilon^2}h^2 + \frac{C}{C_{coer}} \left( \frac{1}{\varepsilon^2} \left[\text{Osc}_j(\partial_t \varphi^m)\right]^2 + (C^{**}_{data})^2h^2 \right) 
\nonumber
\\
&\quad\le \frac{8}{C_{coer} \varepsilon^2 }\norm{\errh^{m-1}}{L^2}^2 + \frac{2 C_I^2 C_P^2 H^2 \omega^2}{C_{coer} \varepsilon^2} \norm{\errh^{m}}{L^2}^2 + \frac{36 \left((C_{inf})^2 + (C_{data}^{\prime})^2\right)^2}{C_{coer}\varepsilon^2}\norm{ \errh^{m}}{L^2}^2
\nonumber
\\
&\qquad+ C_{data}^{*} \left( h^2 + (\Delta t)^2 \right) + \frac{C}{C_{coer} \varepsilon^2} \left( \left[\text{Osc}_j(\partial_t \varphi^m)\right]^2 \right),
\end{align}
where $C_{data}^{*}$ depends on $\varepsilon, \omega$, etc.~but does not depend on the time step size $\Delta t$ or the mesh size $h$. Multiplying by $2\Delta t$, combining like terms and dropping a few of the positive terms on the left hand side, we arrive at
\begin{align}
&\left[1 + \omega \Delta t - 2 \Delta t\left(\frac{2 C_I^2 C_P^2 H^2 \omega^2 + 36 \left((C_{inf})^2 + (C_{data}^{\prime})^2\right)^2}{C_{coer}\varepsilon^2}\right)\right]\norm{\errh^{m}}{L^2}^2 
\nonumber
\\
&\quad \le \frac{C_{coer} \varepsilon^2 + 16 \Delta t}{C_{coer} \varepsilon^2} \norm{\errh^{m-1}}{L^2}^2 + C_{data}^{*} \Delta t \left(h^2 + (\Delta t)^2\right)+ \frac{C \Delta t}{C_{coer} \varepsilon^2} \left( \left[\text{Osc}_j(\partial_t \varphi^m)\right]^2  \right) .
\end{align}
Multiplying by $\dfrac{C_{coer} \varepsilon^2 }{C_{coer} \varepsilon^2 + 16 \Delta t}$ leads to
\begin{align}
&\left[ 1 + \Delta t \left(\frac{\omega C_{coer} \varepsilon^2 - 4 C_I^2 C_P^2 H^2 \omega^2 - 72 \left((C_{inf})^2 + ( C_{data}^{\prime})^2\right)^2 - 16}{C_{coer} \varepsilon^2  + 16 \Delta t }\right) \right] \norm{\errh^{m}}{L^2}^2 
\nonumber
\\
&\quad \le \norm{\errh^{m-1}}{L^2}^2 + \dfrac{ C_{coer} \varepsilon^2 \Delta t}{(C_{coer} \varepsilon^2 + 16 \Delta t)} \left( h^2 + h^5 + (\Delta t)^2\right)C_{data}^{*},
\end{align}
where we have used the bound $\text{Osc}_j(\partial_{t} \varphi^m) \le C h^{2 + \min\{\ell,k+1\}}$ such that $k=2$ is the polynomial degree of the finite element space $Z_h$ and $\partial_t \varphi \in L^\infty(0,\infty;H^{3}(\Omega))$ ($\ell = 3$) by the higher regularity \eqref{eq:higher-regularities} assumption to obtain a bound on the oscillations of $\partial_t \varphi$. (See \cite{brenner:11:frontiers} for details.)

Lemma \ref{lem:geometric-series-bound} now guarantees the required results provided the choice of $H$ and $\omega$ allow for 
\begin{align*}
\lambda_1 := \dfrac{ C_{coer} \varepsilon^2\omega - 4 C_I^2 C_P^2 H^2 \omega^2 - 72 \left((C_{inf})^2 + (C_{data}^{\prime})^2\right)^2 - 16}{C_{coer} \varepsilon^2  + 16 \Delta t } > 0.
\end{align*} 
to hold.
\end{proof}

\section{Numerical Experiments}\label{sec:numerical-experiments}

In this section, we present results of several numerical experiments which demonstrate the effectiveness of the proposed data assimilation finite element method. The Firedrake Project \cite{R:16:firedrake} was used to perform all numerical experiments. We use a square domain $\Omega = (0,1)^2$ 
and take $\TTh$ to be a regular triangulation of $\Omega$ consisting of right isosceles triangles which is a quasi-uniform family. (We use a family of meshes ${\mathcal T}_h$ such that no triangle in the mesh has more than one edge on the boundary.) Additionally, in each experiment, we set the interfacial width parameter $\varepsilon = 0.05$.

%

The data assimilation term $\omega \iprd{I_H(\phih^m - \varphi^m)}{\psi}$ was computed as follows. A true solution $\varphi$ was obtained at all times by selecting a cross shaped region as initial conditions as shown in the top right image of Figure \ref{fig:cross-exp}, setting the nudging parameter $\omega = 0$, and solving the CH equation using the C$^0$ interior penalty FEM \eqref{eq:fully-discrete-fem}. A data assimilation grid size $H$ was chosen and grid points were identified and located on the finite element mesh. A vector was then created such that the value of 1 was assigned for all nodes corresponding to these grid points and a value of 0 was assigned for all other nodes. Let us name this vector $v$. Then the data assimilation term $\omega \iprd{I_H(\phih^m - \varphi^m)}{\psi}$ was computed by
\begin{align*}
\omega \iprd{I_H(\phih^m - \varphi^m)}{\psi} = \omega \iprd{v \phih^m - v \varphi^m}{v\psi},
\end{align*}
where we note that $v \in Z_h$ and that this is equivalent to the interpolation method onto a coarse mesh of piecewise constants $Z_H$, as described in \cite{RZ19}.  Finally, in each of the experiments, the initial conditions for the numerical solution $\phi_0$ was set to random initial conditions as shown in Figure \ref{fig:cross-exp}.

The first numerical experiment demonstrates the effectiveness of the CDA-FEM for various grid sizes $H$. For this experiment, we set the nudging parameter $\omega = \nicefrac{1}{\varepsilon^2} = 400$ as indicated by the theory above. We then chose five different grid sizes $H = 0.011049, 0.015625, 0.03125, $ $0.0625,$ and $0.125$, which correspond respectively to 8,100, 4,096, 1,024, 256 and 64 grid points, while the fine mesh uses piecewise quadratics and has 33,025 grid points.   Theorem \ref{thm:main-error-result} provides a sufficient condition that the grid size should be chosen as $H = \mathcal{O}( \varepsilon^2)$ but our experiments suggest that a grid size much coarser than that will produce good results. 

\begin{figure}[h!]
\subfloat{\includegraphics[width=0.45\textwidth]{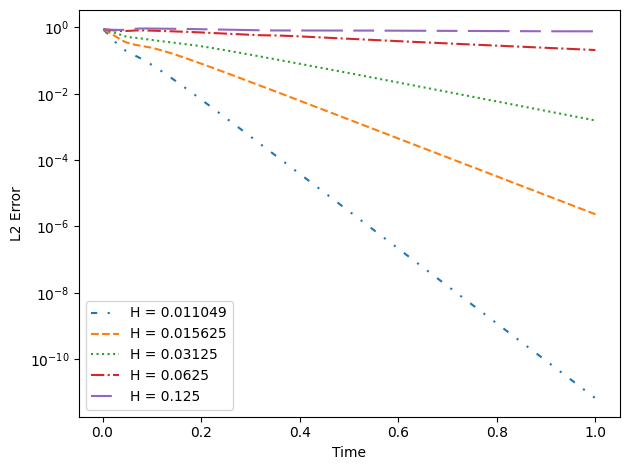}}
\subfloat{\includegraphics[width=0.45\textwidth]{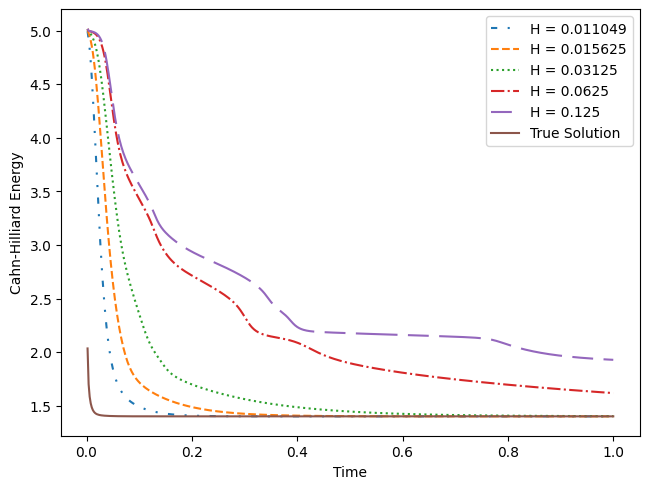}}
\caption{The $L2$ error between the true solution and the solution to the data assimilation finite element method for various data assimilation grid sizes is shown on the left and energy dissipation with respect to the CH energy is shown on the right. The mesh size is $h = \nicefrac{\sqrt{2}}{64}$ and the time step size is $\Delta t = 0.002$. All other parameters are defined in the text. }
\label{fig:grid-error-energy}
\end{figure}

Figure \ref{fig:grid-error-energy} shows a semi-log plot of the error between the true solution and the solution to the CDA-FEM \eqref{eq:fully-discrete-fem} measured in the $L^2$ norm for the five different grid sizes on the left. All but the coarsest grid size of $H = 0.125$ converge exponentially with respect to time to the true solution.  To verify that a grid size of $H = 0.0625$ converges as expected but a grid size of $H = 0.125$ does not, we additionally show re-scaled semi-log plots of the error for these two grid sizes in Figure \ref{fig:grid-error-zoom}. However, it is interesting to note that the grid size of $H = 0.125$ does look like it may eventually converge to the true solution. Additionally, if solutions to the CDA-FEM are converging to the true solution, one would expect that the CH energy of solutions to the CDA-FEM would converge to the CH energy of the true solution. We illustrate that this is the case for the grid sizes $H = 0.011049, 0.015625,$ and $0.03125$ in the image on the right of Figure \ref{fig:grid-error-energy}. 

\begin{figure}[h!]
\subfloat{\includegraphics[width=0.45\textwidth]{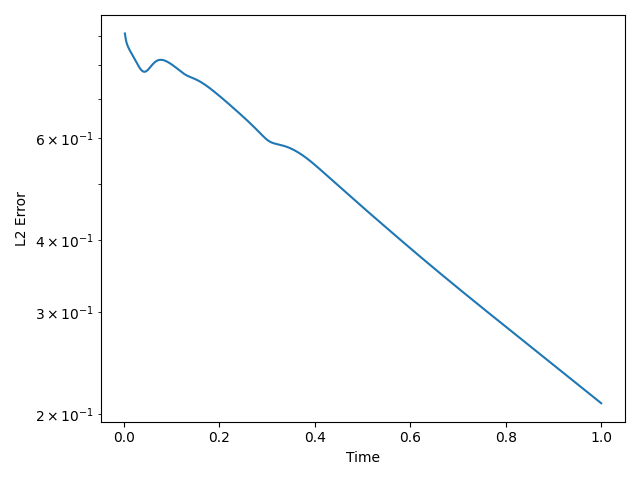}}
\subfloat{\includegraphics[width=0.45\textwidth]{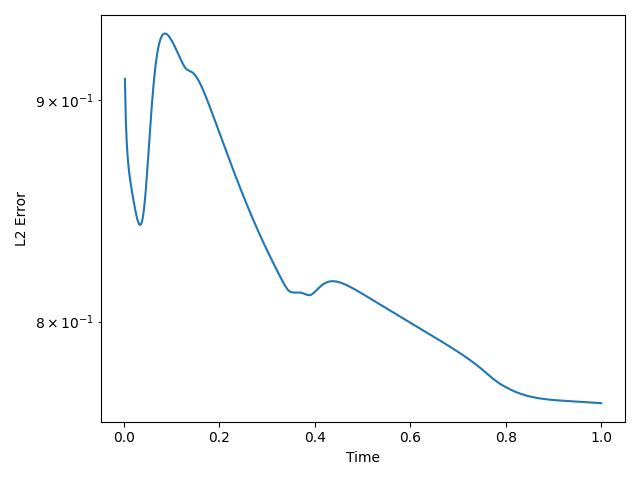}}
\caption{The $L^2$ error between the true solution and the solution to the data assimilation finite element method with a data assimilation grid size of $H = 0.0625$ on the left and $H = 0.125$ on the right. The mesh size is $h = \nicefrac{\sqrt{2}}{64}$ and the time step size is $\Delta t = 0.002$. All other parameters are defined in the text. }
\label{fig:grid-error-zoom}
\end{figure}

The second numerical experiment demonstrates the effectiveness of the CDA-FEM for various values of the nudging parameter $\omega$. For this experiment, we set the data assimilation grid to be $H = 0.03125,$ and chose five different values for the nudging parameter $\omega = 1, 20, 400, 1000,$ and $5000$. Theorem \ref{thm:main-error-result} admits a sufficient condition that the appropriate value for the nudging parameter $\omega$ is at least $\nicefrac{1}{\varepsilon^2} = 400$, but if $\omega$ is too large then $H$ needs to be very small.  However, our experiments show that good results can also be obtained for much larger values of $\omega$.  Figure \ref{fig:nudge-error-energy} shows a semi-log plot of the error between the true solution and the solution to the CDA-FEM \eqref{eq:fully-discrete-fem} measured in the $L^2$ norm for the five different values of the nudging parameter. Only values of $\omega \ge \nicefrac{1}{\varepsilon^2} = 400$ converge exponentially with respect to time as expected. To verify that values of the nudging parameter $\omega = 1$ and $\omega = \nicefrac{1}{\varepsilon} = 20$ do not converge as expected, we additionally show re-scaled semi-log plots of the error for these values of the nudging parameter in Figure \ref{fig:nudge-error-zoom}. One might also expect that increasing the nudging parameter above $\omega = 5000$ will only improve the results. However, we note that in this case, the linear solver may break down.  Convergence of energy for the simulations is also shown in Figure \ref{fig:nudge-error-energy}, and we observe that the simulations that converged to the true solution in $L^2$ norm also found the correct energy, while those that did not converge ($\omega=20,1$) did not find the correct energy.


\begin{figure}[h!]
\subfloat{\includegraphics[width=0.45\textwidth]{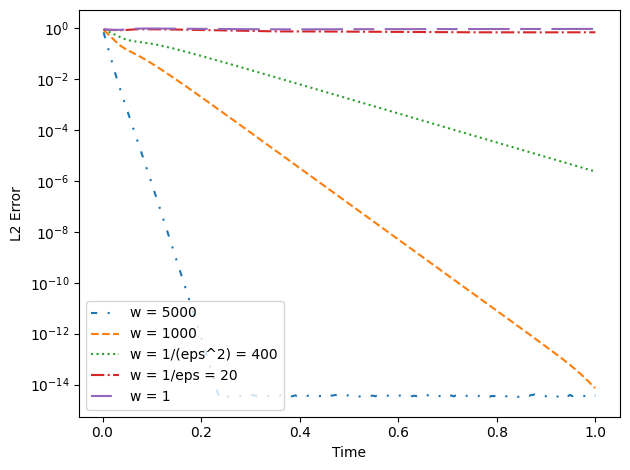}}
\subfloat{\includegraphics[width=0.45\textwidth]{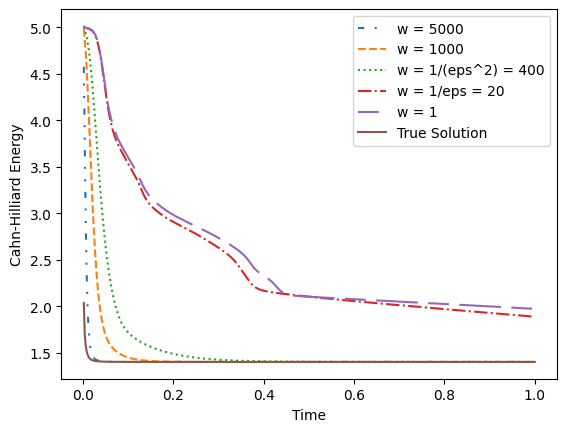}}
\caption{The $L^2$ error between the true solution and the solution to the CDA-FEM for various values of the nudging parameter on the left and energy dissipation with respect to the CH energy is shown on the right.  The mesh size is $h = \nicefrac{\sqrt{2}}{64}$ and the time step size is $\Delta t = 0.002$. All other parameters are defined in the text. }
\label{fig:nudge-error-energy}
\end{figure}

\begin{figure}[h!]
\subfloat{\includegraphics[width=0.45\textwidth]{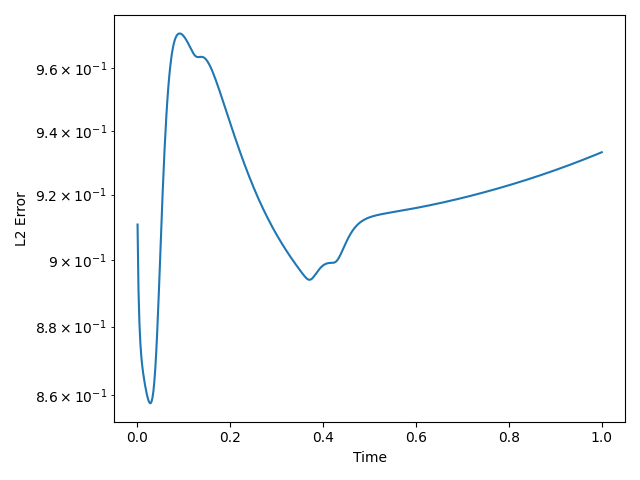}}
\subfloat{\includegraphics[width=0.45\textwidth]{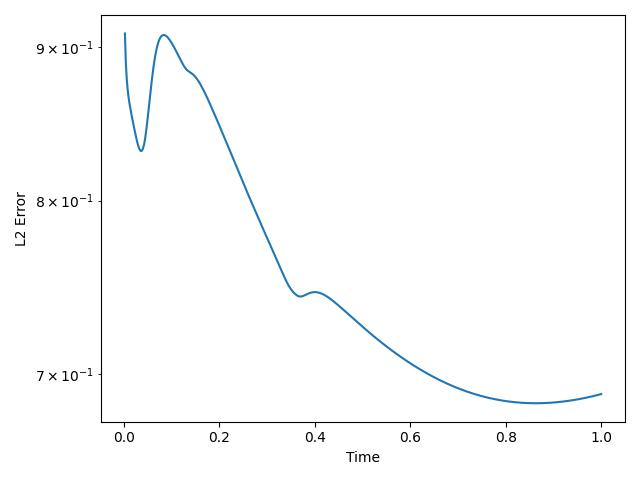}}
\caption{The $L^2$ error between the true solution and the solution to the CDA-FEM with a nudging parameter of $\omega = 1$ on the left and $\omega = 20$ on the right. The mesh size is $h = \nicefrac{\sqrt{2}}{64}$ and the time step size is $\Delta t = 0.002$. All other parameters are defined in the text. }
\label{fig:nudge-error-zoom}
\end{figure}

In viewing the results of the first two experiments above, the performance of the CDA-FEM \eqref{eq:fully-discrete-fem} appears to be more sensitive to the value of the nudging parameter $\omega$ than the data assimilation grid size $H$. To determine if setting a higher value for the nudging parameter can overcome the deficiencies seen by taking coarse grid sizes, we repeated the first experiment with a nudging parameter set equal to $\omega = 1000$. Figure \ref{fig:grid-error-1000} illustrates that increasing the nudging parameter does help improve the results if a coarse grid size $H$ is chosen. This is best illustrated by comparing the convergence of $H = 0.0625$ shown in Figure \ref{fig:grid-error-1000} to that shown in Figure \ref{fig:grid-error-energy}, although all but the grid size $H=0.125$ show dramatic improvement.

\begin{figure}[h!]
\centering
\includegraphics[width=0.45\textwidth]{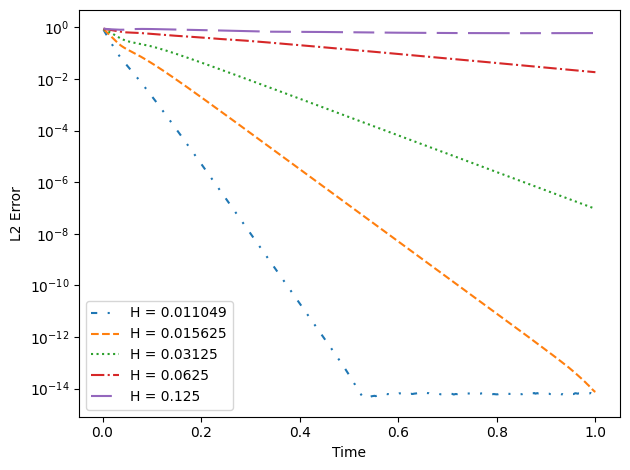}
\caption{The $L2$ error between the true solution and the solution to the data assimilation finite element method for various data assimilation grid sizes. The mesh size is $h = \nicefrac{\sqrt{2}}{64}$ and the time step size is $\Delta t = 0.002$. All other parameters are defined in the text. }
\label{fig:grid-error-1000}
\end{figure}

Finally, in Figure \ref{fig:cross-exp}, we present images of the true solution with initial conditions set as a cross shaped region, the solution to the CDA-FEM \eqref{eq:fully-discrete-fem} with random initial conditions and a nudging parameter of $\omega = 400$ with a data assimilation grid size of $H = 0.03125$, and solutions to the finite element method \eqref{eq:fully-discrete-fem} with random initial conditions and a nudging parameter of $\omega = 0$ side by side at times $t =0.0, 0.002, 0.01, 0.05,1.0 $. A mesh size of $h = \nicefrac{\sqrt{2}}{64}$ and an interfacial width parameter of $\varepsilon = 0.05$ was chosen for each.  Convergence of the CDA-FEM to the true solution is observed in the sequence of plots, while the solution without data assimilation finds a different long time steady state.

\begin{figure}[h!]
\centering
\subfloat	{\includegraphics[width=0.25\textwidth]{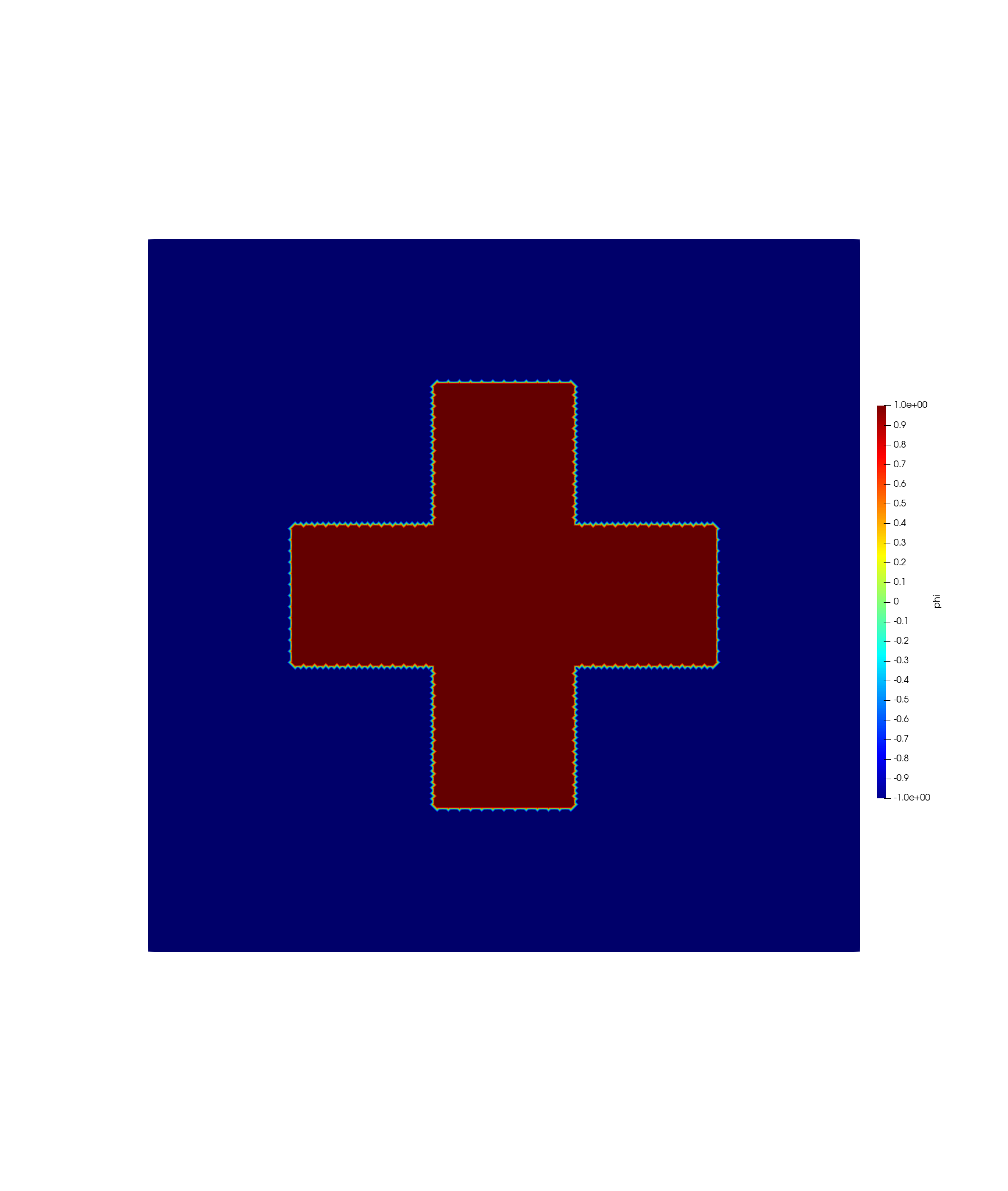}}
\subfloat	{\includegraphics[width=0.25\textwidth]{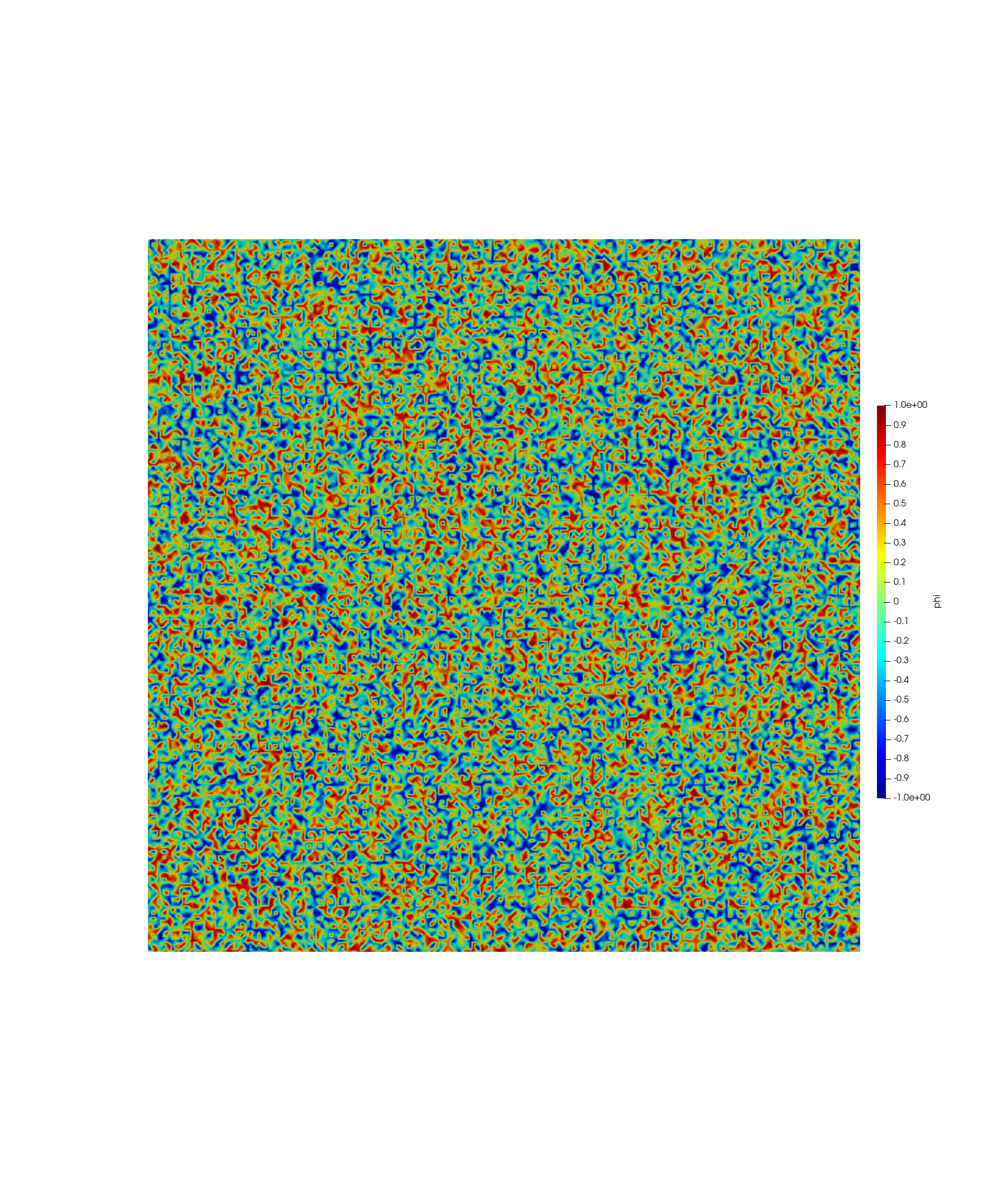}}
\subfloat	{\includegraphics[width=0.25\textwidth]{rand_init_conds}}
\\[-8ex]
\subfloat	{\includegraphics[width=0.25\textwidth]{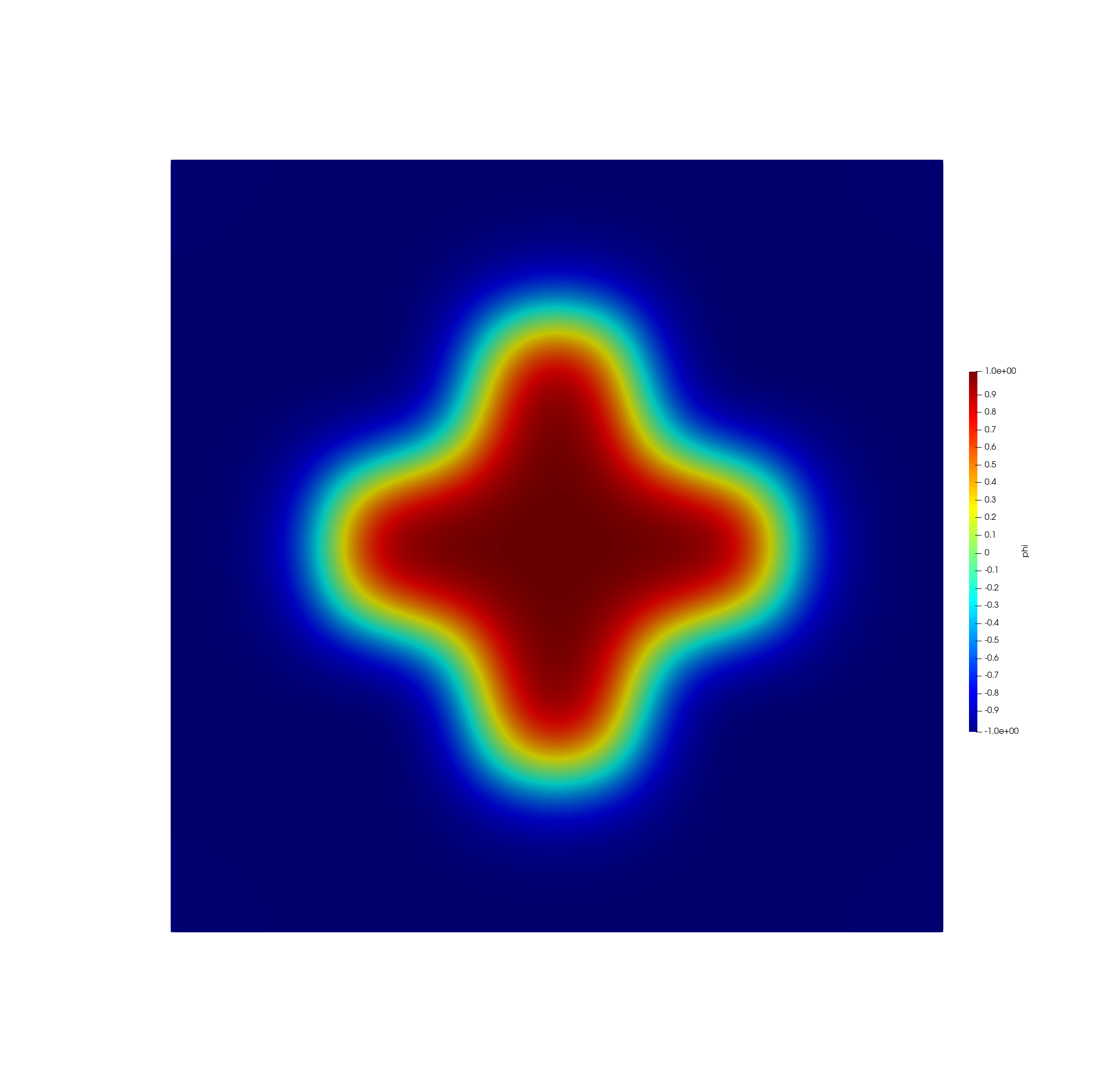}}
\subfloat	{\includegraphics[width=0.25\textwidth]{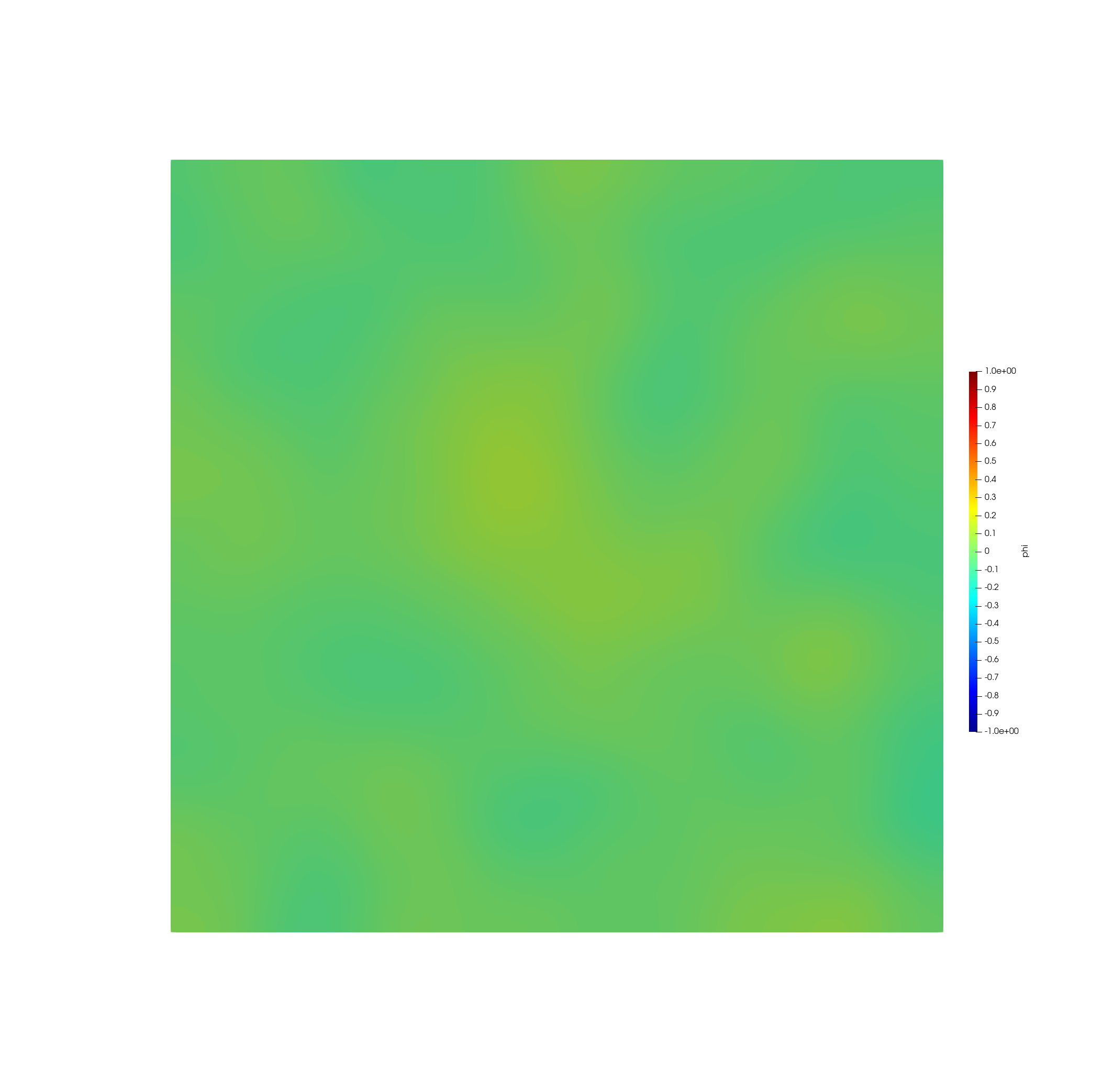}}
\subfloat	{\includegraphics[width=0.25\textwidth]{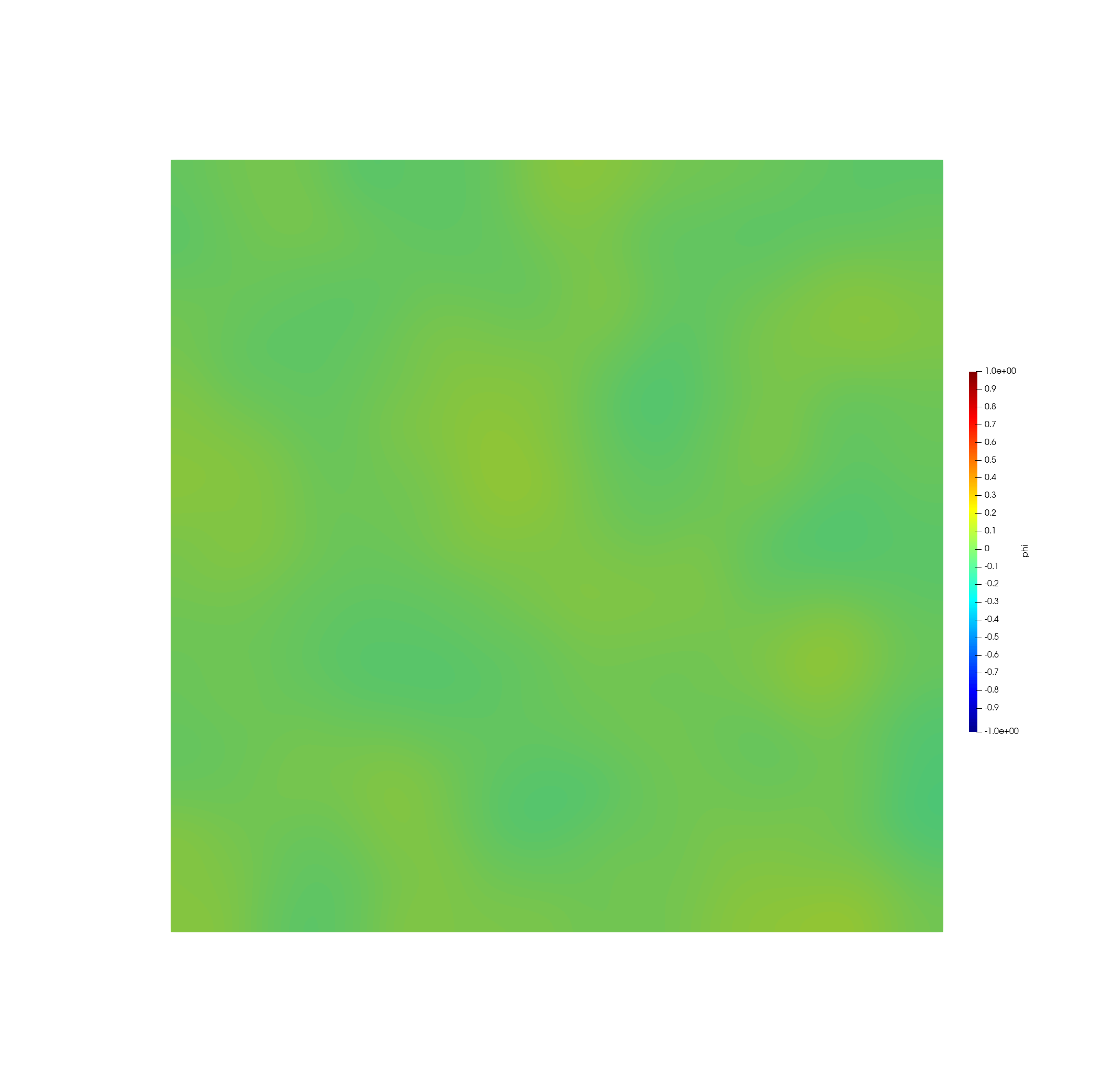}}
\\[-5ex]
\subfloat	{\includegraphics[width=0.25\textwidth]{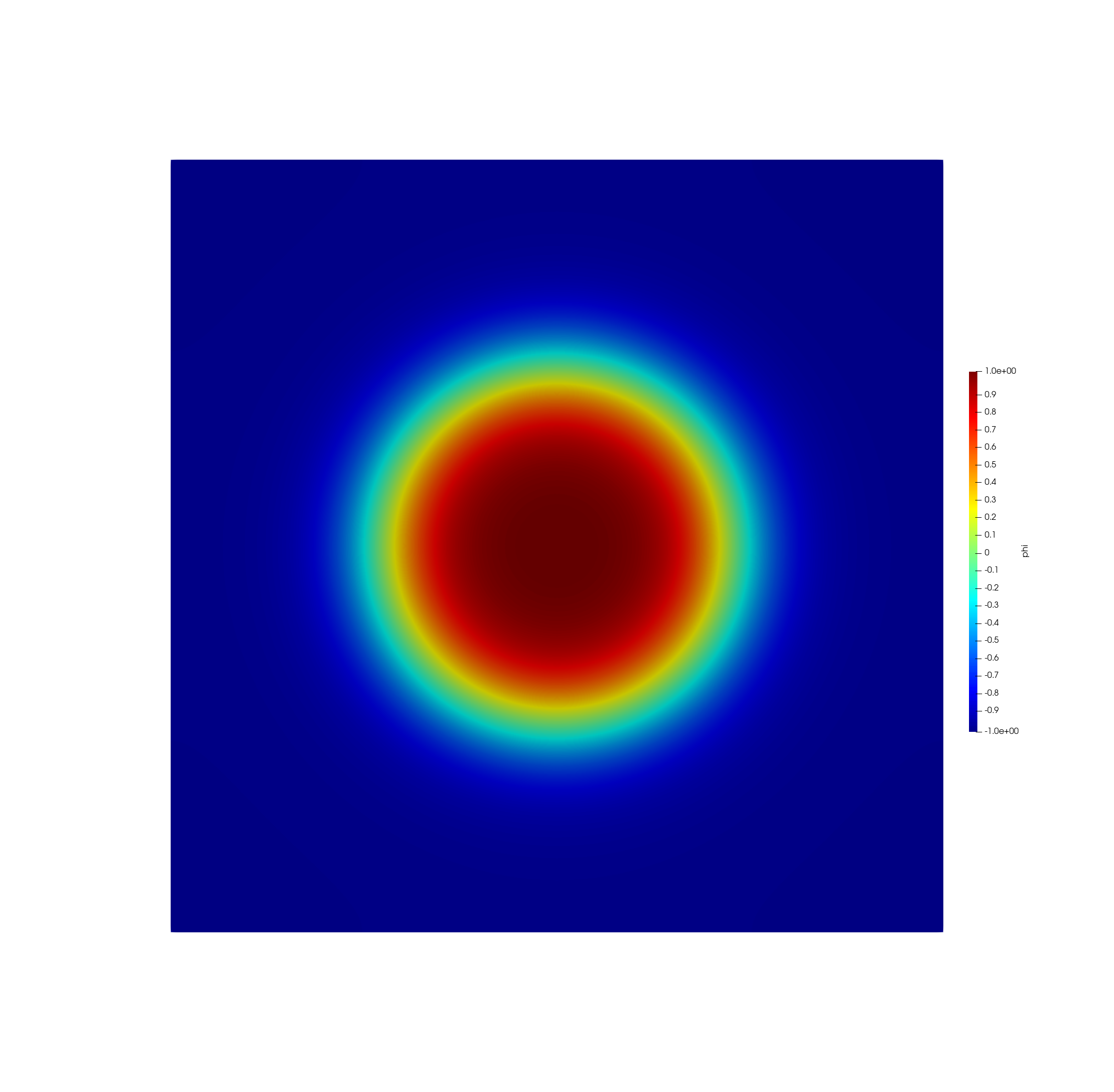}}
\subfloat	{\includegraphics[width=0.25\textwidth]{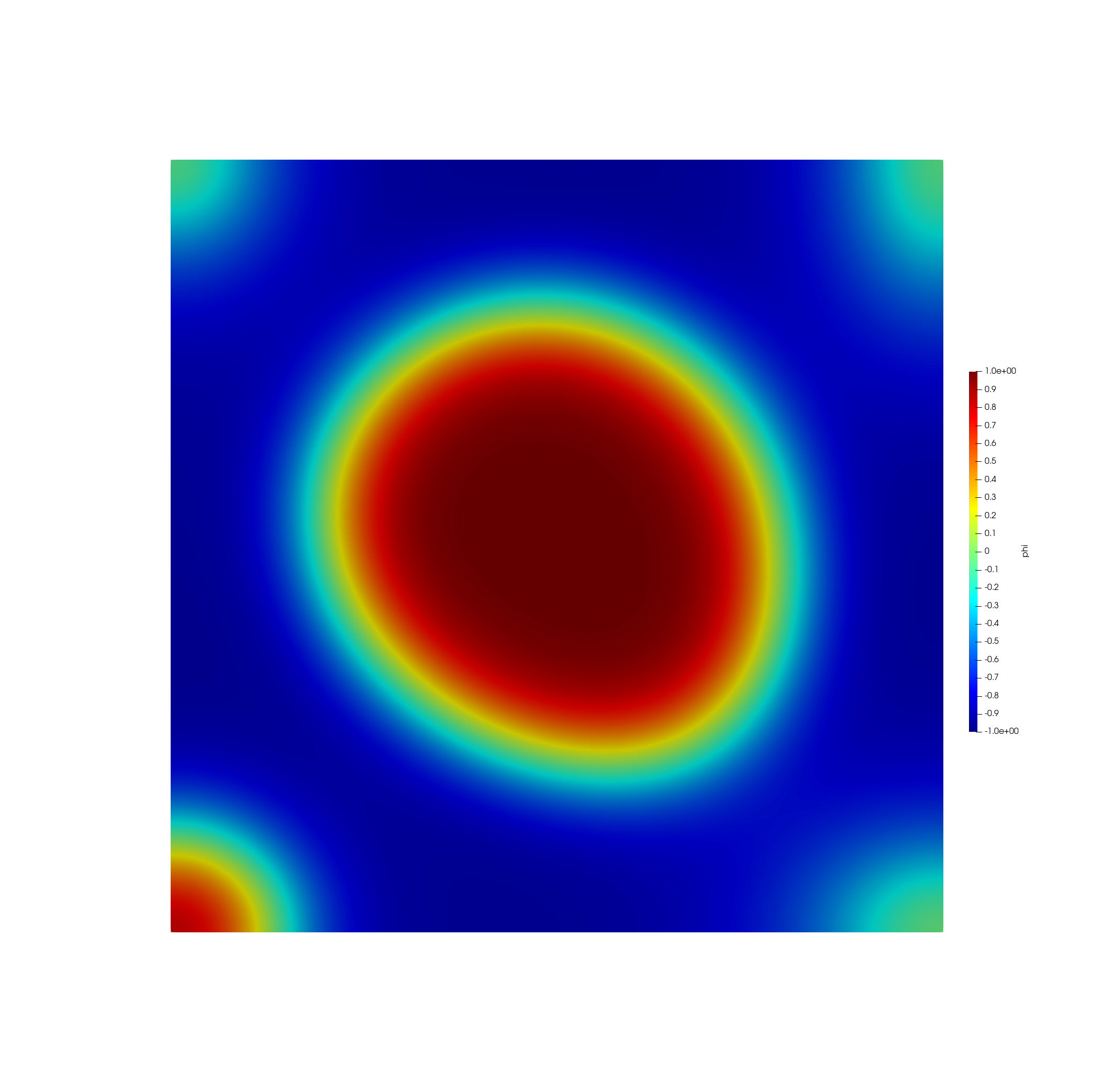}}
\subfloat	{\includegraphics[width=0.25\textwidth]{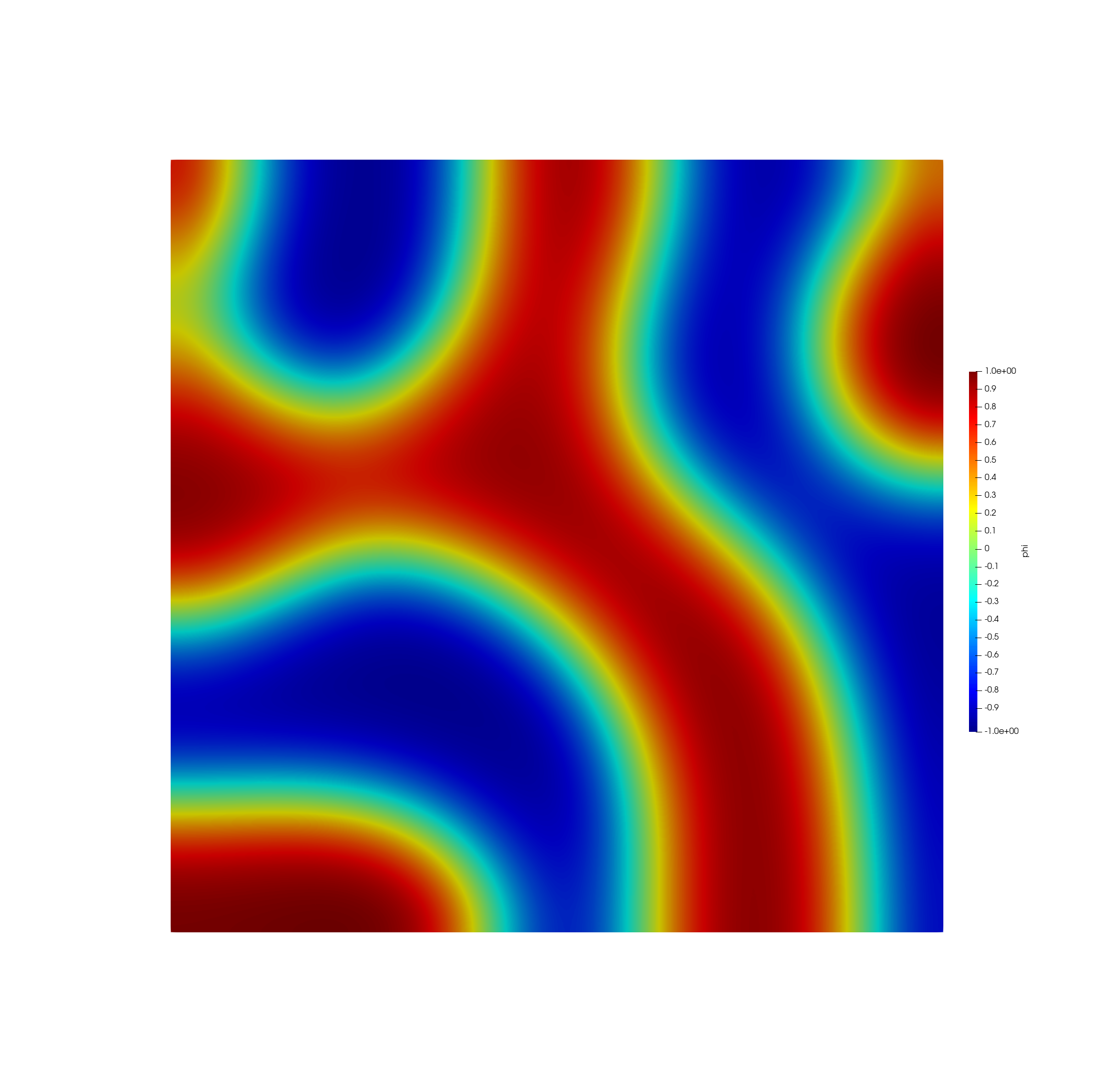}}
\\[-5ex]
\subfloat	{\includegraphics[width=0.25\textwidth]{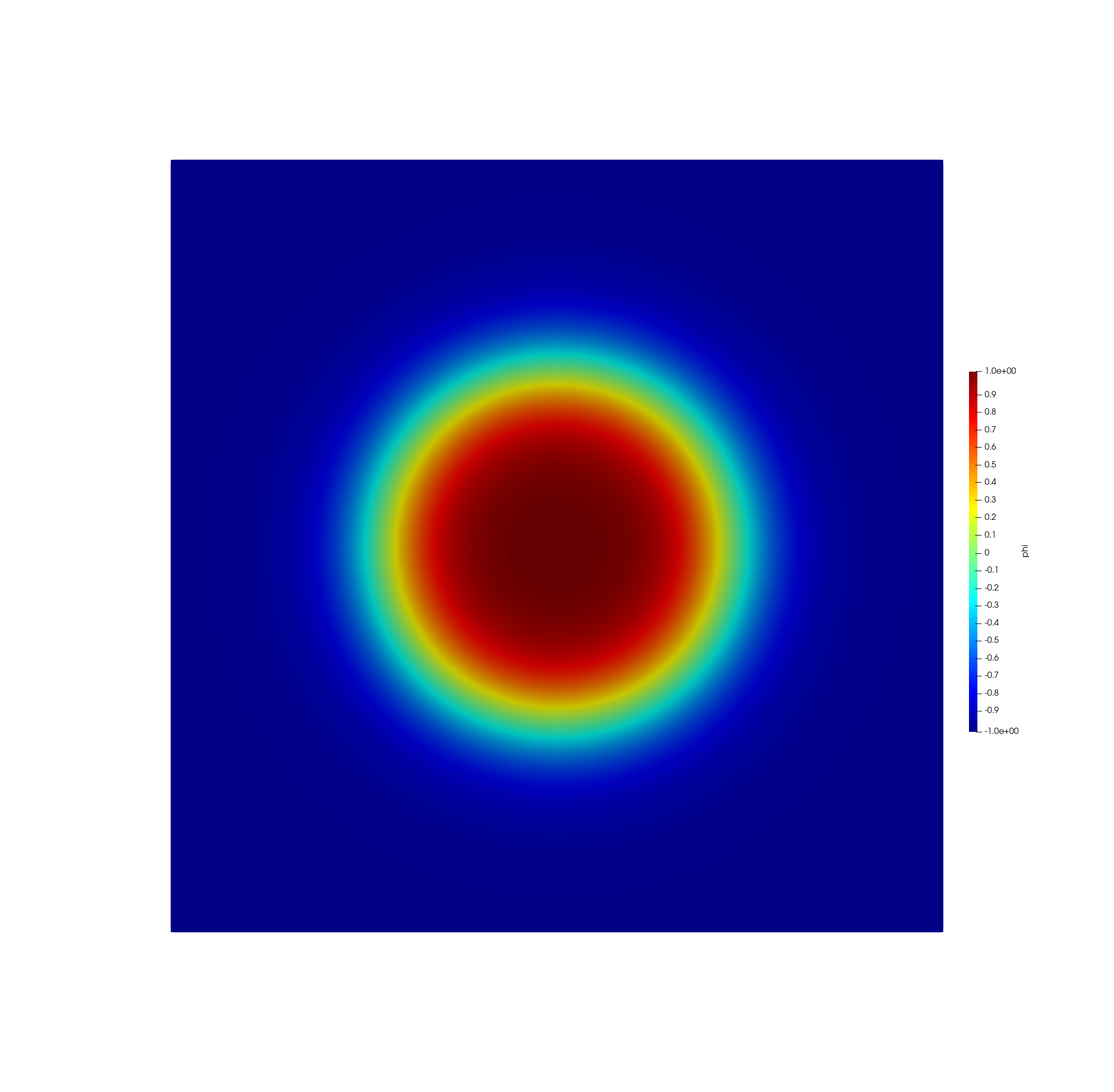}}
\subfloat	{\includegraphics[width=0.25\textwidth]{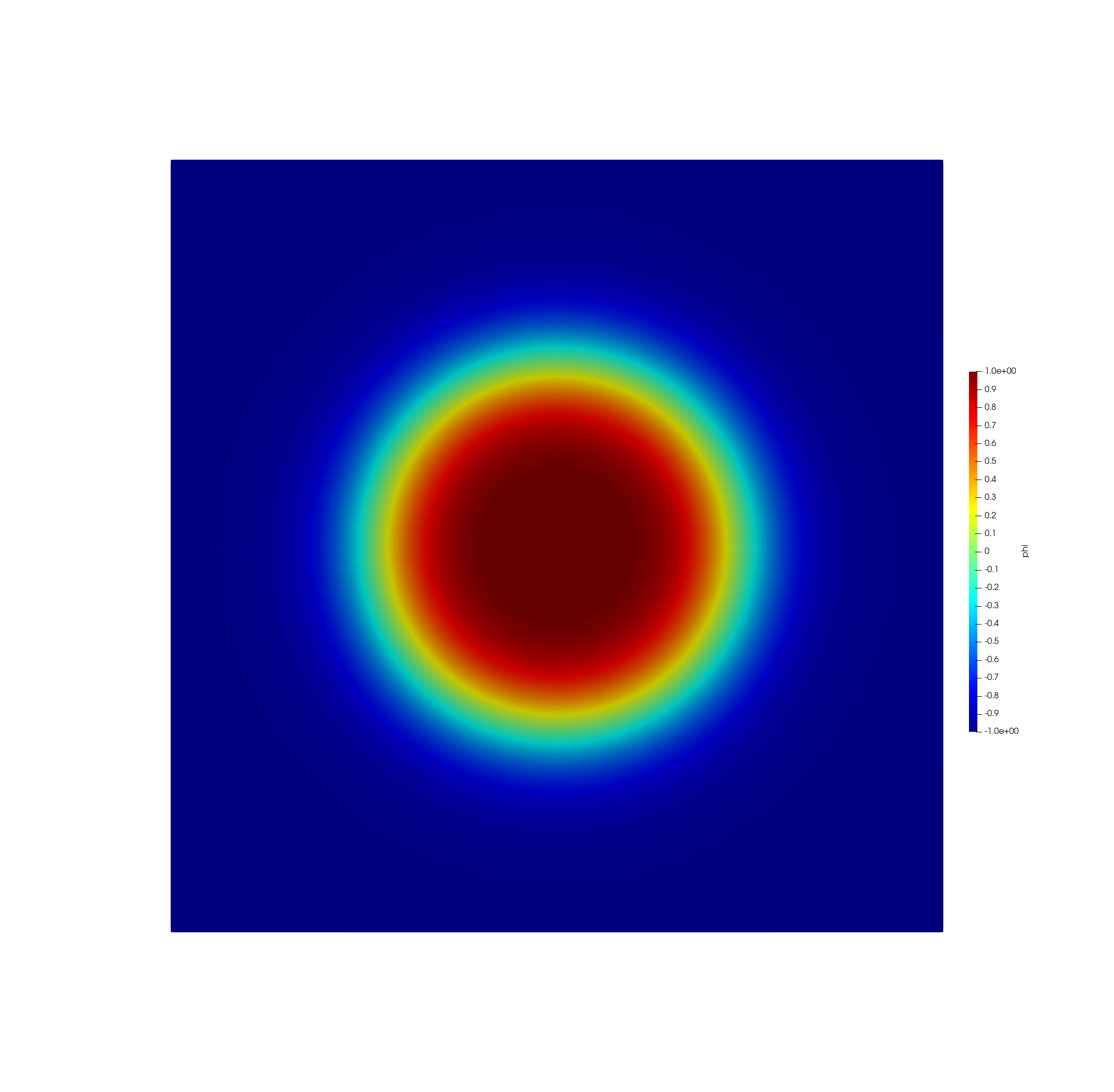}}
\subfloat	{\includegraphics[width=0.25\textwidth]{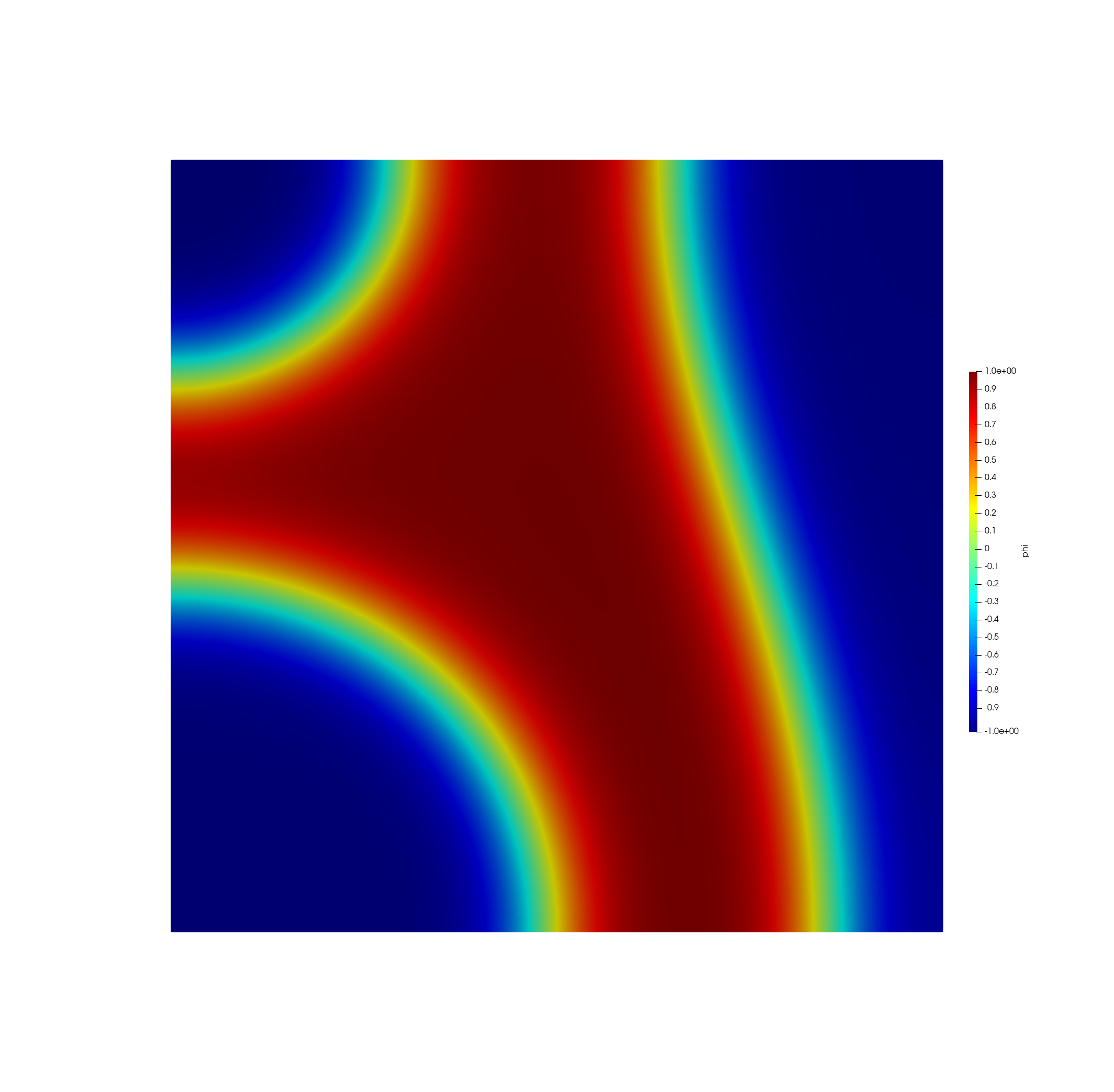}}
\\[-5ex]
\subfloat	{\includegraphics[width=0.25\textwidth]{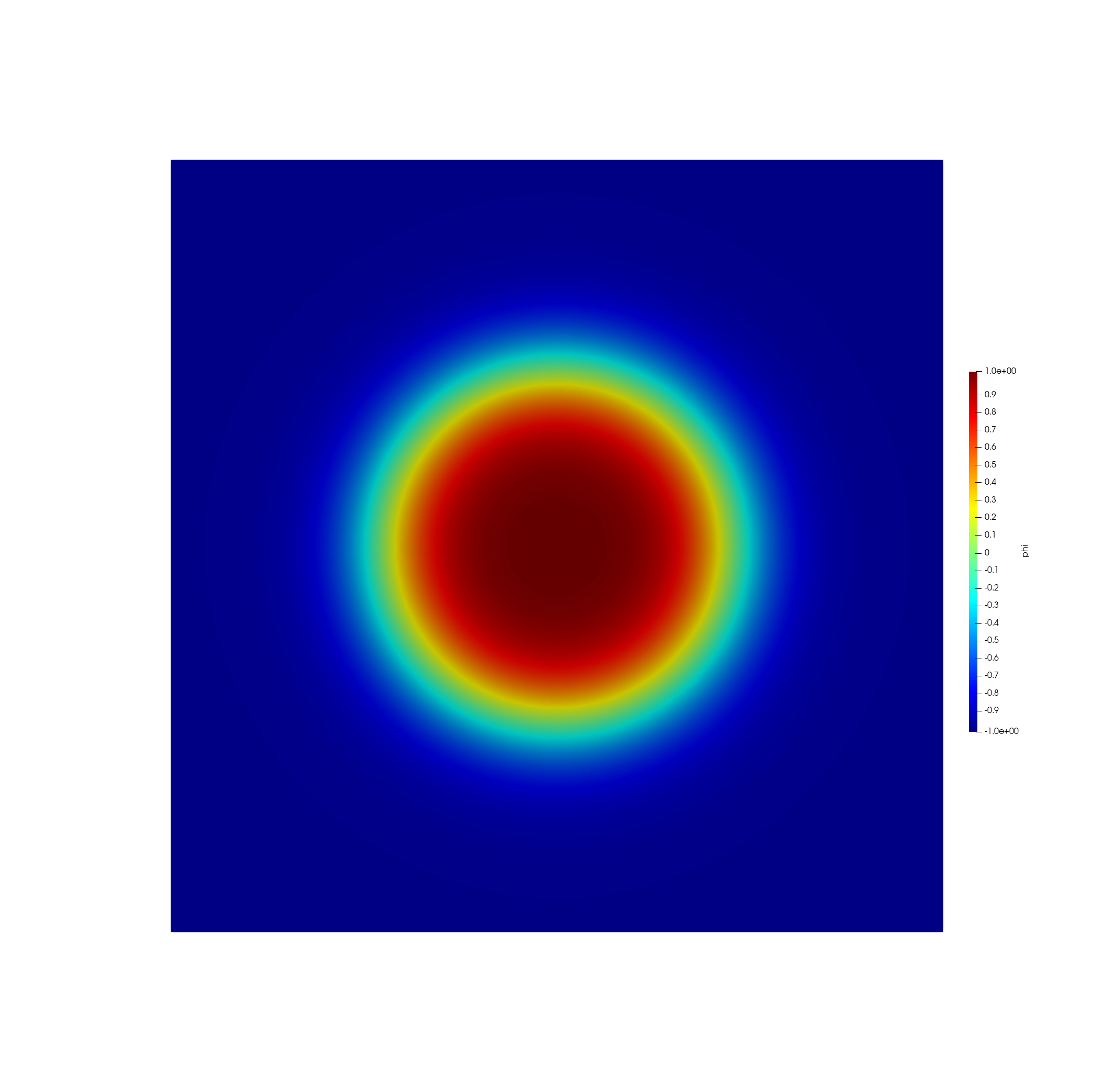}}
\subfloat	{\includegraphics[width=0.25\textwidth]{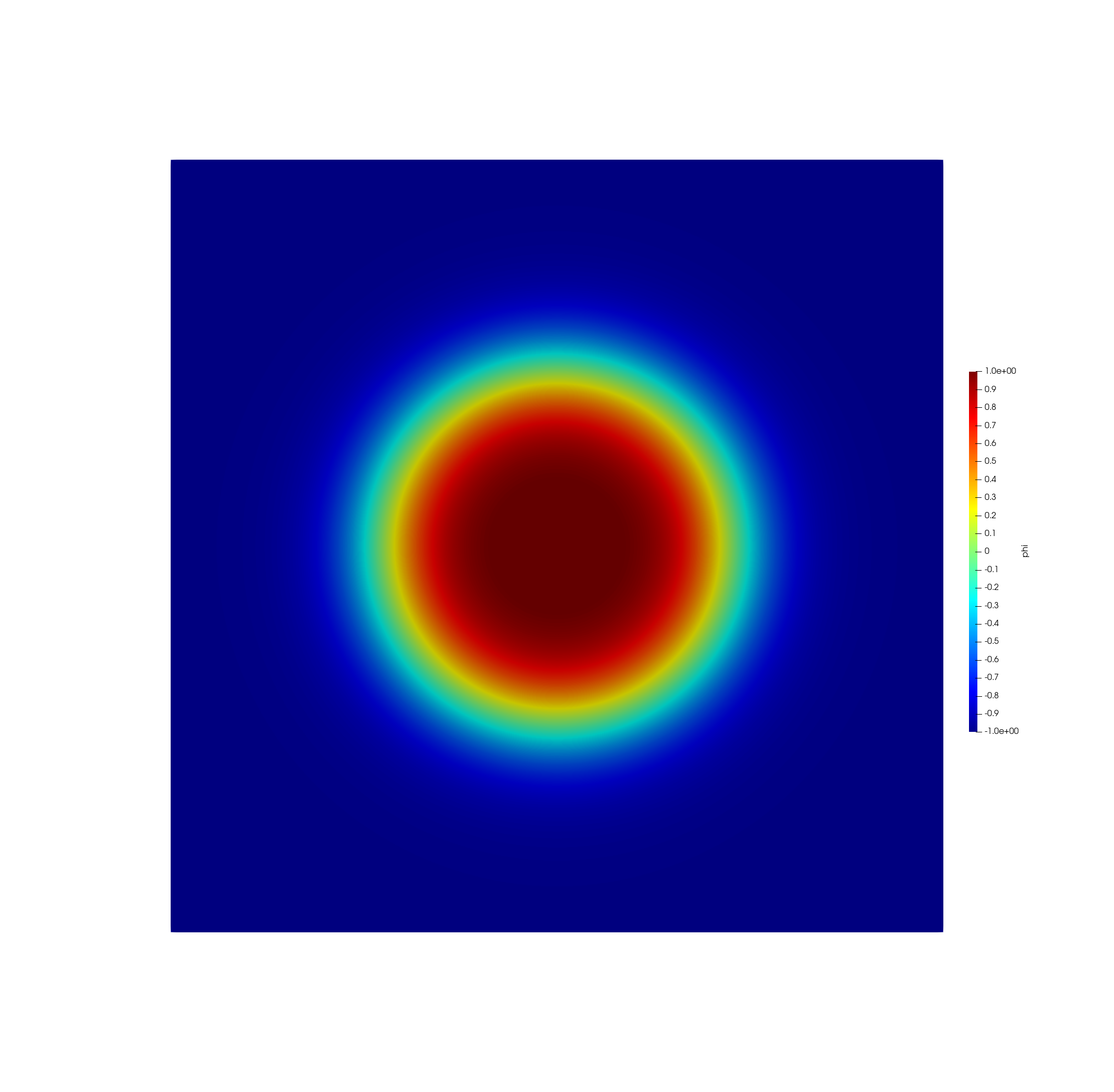}}
\subfloat	{\includegraphics[width=0.25\textwidth]{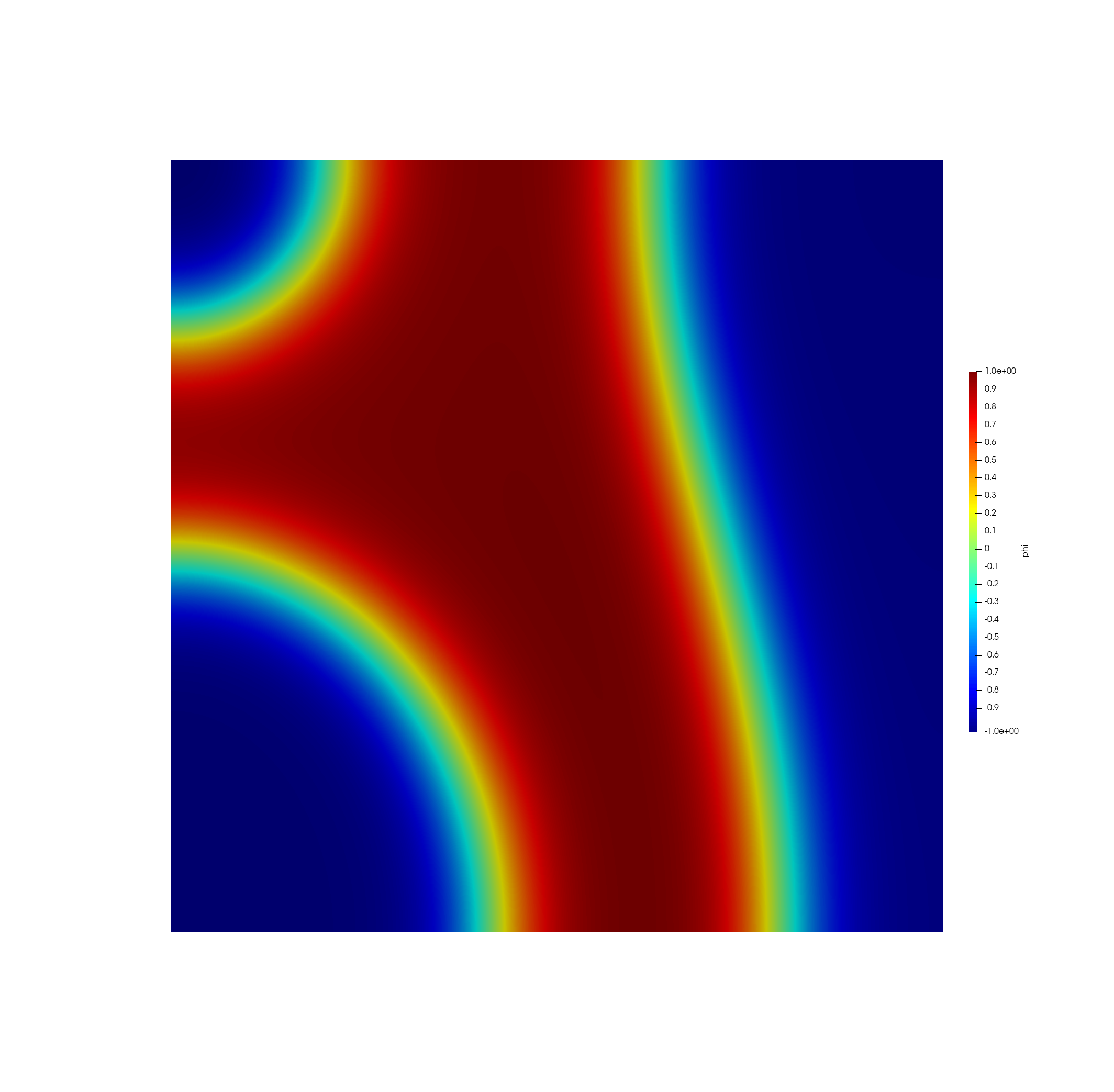}}
\caption{Comparison of the true solution (left), the data assimilation finite element method with random initial conditions and $\omega = \varepsilon^{-2}$ (middle), and the data assimilation finite element method with random initial conditions and $\omega = 0$ (right). The times on each line from top to bottom are $t=0.0, 0.002, 0.01, 0.05,1.0 $. The other parameters are as follows: $\Delta t = 0.002, h = \nicefrac{\sqrt{2}}{64}, H = 0.03125$.}
\label{fig:cross-exp}
\end{figure}

\section{Conclusions and Future Directions}

We proposed, analyzed and tested a CDA-FEM method for the Cahn-Hilliard equations.  A fourth order formulation of Cahn-Hilliard was used, and so that common FE software packages could be used, the spatial discretization used was C$^0$ interior penalty.  We proved long time stability and accuracy of the method, provided enough measurement points and a large enough nudging parameter.  Numerical tests revealed the method is very effective.

For future work, there are several important questions that remain unresolved.  First, making a CDA method work for the more 
commonly used second order mixed formulation is an important next step.  Second, the analytical results herein give sufficient conditions on $H$ and $\omega$ for the results to hold, but our numerical tests suggest these conditions are not sharp.  Hence an improved analysis that sharpens these bounds may be possible.  Finally, extending these results to two-phase flow is an important future direction.


\end{document}